\theoremstyle{plain}
\newtheorem{theorem}{Theorem}[section]
\newtheorem{proposition}[theorem]{Proposition}
\newtheorem{lemma}[theorem]{Lemma}
\newtheorem{corollary}[theorem]{Corollary}
\theoremstyle{definition}
\newtheorem{definition}[theorem]{Definition}
\theoremstyle{remark}
\newtheorem{remark}[theorem]{Remark}
\newtheorem{example}[theorem]{Example}
\newcommand{\Kern}{\mathrm{Ker}}
\newcommand{\Beeld}{\mathrm{Im}}
\renewcommand{\lim}{\mathrm{lim}}
\newcommand{\colim}{\mathrm{colim}}
\newcommand{\Proj}{\mathrm{Proj}}
\newcommand{\Tor}{\mathrm{Tor}}
\newcommand{\Ext}{\mathrm{Ext}}
\newcommand{\Hom}{\mathrm{Hom}}
\newcommand{\RHom}{\mathrm{RHom}}
\newcommand{\Def}{\mathrm{Def}}
\newcommand{\op}{^{\mathrm{op}}}
\newcommand{\Ob}{\mathrm{Ob}}
\newcommand{\Z}{\mathbb{Z}}
\newcommand{\N}{\mathbb{N}}
\newcommand{\PP}{\mathbb{P}}
\newcommand{\AAA}{\mathfrak{a}}
\newcommand{\BBB}{\mathfrak{b}}
\newcommand{\SSS}{\mathfrak{s}}
\newcommand{\GGGG}{\mathfrak{g}}
\newcommand{\Ab}{\ensuremath{\mathsf{Ab}} }
\newcommand{\Mod}{\ensuremath{\mathsf{Mod}} }
\newcommand{\Gr}{\ensuremath{\mathsf{Gr}} }
\newcommand{\Sh}{\ensuremath{\mathsf{Sh}} }
\newcommand{\mmod}{\ensuremath{\mathsf{mod}} }
\newcommand{\Qch}{\ensuremath{\mathsf{Qch}} }
\newcommand{\Tors}{\ensuremath{\mathsf{Tors}} }
\newcommand{\Qgr}{\ensuremath{\mathsf{Qgr}} }
\newcommand{\Qmod}{\ensuremath{\mathsf{Qmod}} }
\newcommand{\Pro}{\ensuremath{\mathsf{Pro}}}
\newcommand{\Add}{\ensuremath{\mathsf{Add}}}
\newcommand{\lra}{\longrightarrow}
\newcommand{\ra}{\rightarrow}
\newcommand{\bbb}{\ensuremath{\mathcal{B}}}
\newcommand{\ccc}{\ensuremath{\mathcal{C}}}
\newcommand{\ddd}{\ensuremath{\mathcal{D}}}
\newcommand{\eee}{\ensuremath{\mathcal{E}}}
\newcommand{\LLL}{\ensuremath{\mathcal{L}}}
\newcommand{\ooo}{\ensuremath{\mathcal{O}}}
\newcommand{\sss}{\ensuremath{\mathcal{S}}}
\newcommand{\ttt}{\ensuremath{\mathcal{T}}}
\title{Abelian and derived deformations in the presence of $\Z$-generating geometric helices}
\author{Olivier De Deken} 
\address[Olivier De Deken]{Departement Wiskunde-Informatica, Middelheimcampus,
Middelheimlaan 1,
2020 Antwerp, Belgium}
\email{olivier.dedeken@myonline.be}
\author{Wendy Lowen} 
\address[Wendy Lowen]{Departement Wiskunde-Informatica, Middelheimcampus,
Middelheimlaan 1,
2020 Antwerp, Belgium}
\email{wendy.lowen@ua.ac.be}
\thanks{The first author is aspirant with the Research Foundation - Flanders (FWO)}
\thanks{The second author is postdoctoral fellow with the Research Foundation - Flanders (FWO)}
\begin{document}
\maketitle

\begin{abstract}
For a Grothendieck category $\ccc$ which, via a $\Z$-generating sequence $(\ooo(n))_{n \in \Z}$, is equivalent to the category of ``quasi-coherent modules'' over an associated $\Z$-algebra $\AAA$, we show that under suitable cohomological conditions ``taking quasi-coherent modules'' defines an equivalence between linear deformations of $\AAA$ and abelian deformations of $\ccc$. If $(\ooo(n))_{n \in \Z}$ is at the same time a geometric helix in the derived category, we show that restricting a (deformed) $\Z$-algebra to a ``thread'' of objects defines a further equivalence with linear deformations of the associated matrix algebra. 
\end{abstract}

\section{Introduction}

Deformation theoretic ideas have always been important in non-commutative geometry. Some of the basic non-commutative algebras ``of geometric nature'', like Weyl algebras or quantum planes,  naturally appear as free algebras with ``deformed'' commutativity relations. When we think in terms of affine (non-commutative) geometry, Gerstenhaber's deformation theory of algebras
makes these ideas precise. In this non-commutative affine setup, module categories over non-commutative algebras naturally take over the role of categories of quasi-coherent sheaves, and thanks to homological criteria for geometric notions, these categories harbour a certain geometric side of the picture. In the development of non-commutative projective geometry (see for example \cite{staffordvandenbergh}), a similar story, inspired by Serre's theorem, unfolds. This time, algebraic objects like (non-commutative) graded rings are represented by categories of ``quasi-coherent graded modules'' replacing categories of quasi-coherent sheaves, and these categories are considered to be the primary geometic objects. In fact, since homological algebra really lives on the level of the derived categories, a related point of view goes further and proposes triangulated categories, or rather suitable enhancements thereof, as primary geometric objects.

In the classification of specific non-commutative projective varieties, different types of deformation theoretic arguments have been used. The basic idea is that ``non-commutative deformations of a certain type of commutative space should be non-commutative spaces of that same type''. The question is then: what exactly do we deform? In the different reasonings leading to definitions of, for example, non commutative projective planes, the ``abelian approach'' of \cite{artintatevandenbergh}, \cite{vandenbergh2} and the ``derived approach'' of  \cite{bondalpolishchuk} both eventually lead to the same answer.

In the mean time, a deformation theory 
for abelian categories has been developed in \cite{lowenvandenbergh1, lowenvandenbergh2, lowen2} with as one of the motivations to provide a theoretical framework for some of the ad hoc deformation theoretic arguments in these different approaches.

In this paper, we apply this theory under homological conditions that typically occur for Fano varieties.
The abelian categories we are interested in are categories replacing quasi-coherent sheaves. The most natural framework to define such categories, especially in the deformation context, is that of $\Z$-algebras, i.e. linear categories whose object set is isomorphic to $\Z$.

Since our approach makes use of linear topologies and sheaves, we collect some preliminary results in \S \ref{parlin}. In particular, in Theorem \ref{cor1} we characterize, for a given linear topology $\ttt$ and linear functor $\AAA \lra \ccc$ landing in a Grothendieck category, the situation when $\ccc \cong \Sh(\AAA, \ttt)$, the category of linear sheaves for $\ttt$. This refinement of the main theorem of \cite{lowen1} is a $\ttt$-local version of the characterization of module categories using finitely generated projective generators.

In \S \ref{parparZalg}, we investigate categories of quasi-coherent modules over $\Z$-algebras. For an arbitrary $\Z$-algebra, the category of quasi-coherent modules is defined to be $\Qmod(\AAA) = \Sh(\AAA, \ttt_{\mathrm{tails}})$ for a certain \emph{tails topology} on $\AAA$.  If for a $\Z$-algebra $\AAA$, the category of torsion modules $\Tors(\AAA)$ is localizing, then we have $\Qmod(\AAA) \cong \Mod(\AAA)/\Tors(\AAA)$ (see \S \ref{parqcoh}). In general, the topology $\ttt_{\mathrm{tails}}$ is the ``closure under glueings'' of a covering system $\LLL_{\mathrm{tails}}$ which is very easy to describe: it has the covers $\AAA(-, n)_{\geq m}$ as a basis. If $\Tors(\AAA)$ is localizing, taking the closure under glueings is not necessary, i.e. $\ttt_{\mathrm{tails}} = \LLL_{\mathrm{tails}}$. In particular, this is the case for \emph{finitely generated $\Z$-algebras} which we define in \S \ref{parqcoh}. Although the notion is modeled on finite generation for $\Z$-graded algebras, the term can be deceiving because it actually involves infinitely many generators. For connected, positively graded $\Z$-algebras, finite generation is weaker than the classical noetherian hypothesis on $\AAA$, and even weaker than the ``coherence'' hypothesis introduced in \cite{polishchuk} in order to be able to tackle analytic examples.

In \S \ref{parchar}, we characterize Grothendieck categories $\ccc$ that are equivalent to the category of  quasi-coherent modules over a certain associated $\Z$-algebra. More precisely, we construct $\AAA$ starting from a sequence $(\ooo(n))_{n \in \Z}$ of objects in $\ccc$ by putting
$$\AAA(n, m) = \begin{cases} \ccc(\ooo(-n), \ooo(-m)) & \text{if} \,\, n \geq m \\ 0 & \text{otherwise} \end{cases}$$
If $\ccc \cong \Qmod(\AAA)$, we call $(\ooo(n))_{n \in \Z}$ a \emph{$\Z$-generating} sequence. We now suppose that $\ttt_{\mathrm{tails}} = \LLL_{\mathrm{tails}}$ on $\AAA$. Our characterization in Theorem \ref{cor1} is obtained from Theorem \ref{thechar} by considering the topology $\ttt_{\mathrm{tails}}$. If we restrict our attention to sequences of finitely presented objects in locally finitely presented Grothendieck categories, we recover the familiar geometric condition of ampleness, combined with $\ttt_{\mathrm{tails}}$-projectivity (Corollary \ref{coramp}, see also \cite{polishchuk}). 

Let $(\ooo(n))_{n \in \Z}$ be a $\Z$-generating sequence in a Grothendieck category $\ccc$, with associated $\Z$-algebra $\AAA$. In \S \ref{parabZ}, applying the results of \cite{lowenvandenbergh1} and using the topology $\ttt_{\mathrm{tails}}$, we prove that under the additional assumption that $\Ext^{1,2}_{\ccc}(\ooo(m), X \otimes_k \ooo(n)) = 0$ for $m \leq n$ and $X \in \mmod(k)$, there is an equivalence
$$\Def_{\mathrm{lin}}(\AAA) \lra \Def_{\mathrm{ab}}(\ccc): \BBB \longmapsto \Qmod(\BBB)$$
between linear deformations of $\AAA$ and abelian deformations of $\ccc$ (Theorem \ref{maindef}). 

In \S \ref{pardermat}, we look at the situation in which a $\Z$-generating sequence $(\ooo(n))_{n \in \Z}$ in $\ccc$ is at the same time a geometric helix in the derived category, and investigate the compatibility with deformation (Theorem \ref{helixlift}). Therefore, we necessarily define all the relevant notions, in particular mutations, over an arbitrary commutative ground ring. 
If $(\ooo(n))_{n \in \Z}$ is a geometric $(l,d)$-helix (Definition \ref{defhel}), then $D(\ccc) \cong D(\AAA_{[i - l, i]})$ for every $i$, where $\AAA_{[i - l, i]}$ is the restriction of $\AAA$ to the objects $i-l, \dots i-1, i$. We construct a further equivalence
$$\Def_{\mathrm{lin}}(\AAA) \lra \Def_{\mathrm{lin}}(\AAA_{[i-l, i]}): \BBB \longmapsto \BBB_{[i-l, i]}$$
between linear deformations of $\AAA$ and linear deformations of $\AAA_{[i - l, i]}$
(Theorem \ref{mainderdef}). 

The basic example where these results apply is $\ccc = \Qch(\PP^n)$ with the standard sequence $(\ooo(n))_{n \in \Z}$. Hence, this explains the equivalence of the abelian and the derived approach to non-commutative $\PP^2$'s. It is our intention to apply these results to some concrete geometric helices of sheaves on Fano varieties. This is work in progress. 

\vspace{0,3cm}

\noindent \emph{Acknowledgement.}
The authors are very grateful to Michel Van den Bergh for the ori\-ginal idea of using $\mathbb{Z}$-algebras to capture abelian deformations of projective schemes, and for other interesting ideas.
They also thank Louis de Thanhoffer de V\"olcsey for interesting discussions.

\section{Comparison of linear topologies}\label{parlin}

In \cite{lowen1}, functors $u: \AAA \lra \ccc$ from a small linear category into a Grothendieck category realizing $\ccc$ as a localization of $\Mod(\AAA)$ were characterized intrinsically using linear topologies and sheaves. More precisely, under suitable conditions, a representation $\ccc \cong \Sh(\AAA, \ttt_{\ccc})$ was obtained for a certain topology $\ttt_{\ccc}$ on $\AAA$ (Theorem \ref{GP}). If the functor $u$ is fully faithfull, this is an instance of the Gabriel-Popescu theorem. However, many natural representations occur where this is not the case, and the conditions ``full'' and ``faithful'' have to be replaced by $\ttt_{\ccc}$-local versions. In this section we extend Theorem \ref{GP} to the situation where an additive topology $\ttt$ is specified in advance, and the question is whether $\ccc \cong \Sh(\AAA, \ttt)$. The characterization we obtain in Corollary \ref{cor1} is a $\ttt$-local version of the well known characterization of module categories as having a finitely generated projective generator. This result is applied in Theorem \ref{thechar} to characterize categories of quasi-coherent modules over a $\Z$-algebra.

\subsection{Linear topologies}\label{parGP}

Let $k$ be a commutative groundring.
Let $\AAA$ be a $k$-linear category and $$\Mod(\AAA) = k-\mathsf{Lin}(\AAA^{\op}, \Mod(k)) \cong \Add(\AAA^{\op}, \Ab)$$ the category of right $\AAA$-modules. Then the localizations of $\Mod(\AAA)$ are in 1-1 correspondence with linear topologies on $\AAA$. For a detailed exposition, we refer to \cite{lowen1}. Definitions \ref{def1} and \ref{def2} were made in \cite{lowen4} using different terminology. For the convenience of the reader, we recall the main points. 

A \emph{covering system} on $\AAA$ consists of collections $\ttt(A)$ of subfunctors of $\AAA(-,A) \in \Mod(\AAA)$ for every $A \in \AAA$. The subfunctors $R \in \ttt(A)$ are called \emph{coverings} of $A$. The covering system $\ttt$ is called a \emph{($k$-linear) topology} if the coverings satisfy the $k$-linearized axioms for a Grothendieck topology. In this case, the corresponding category $\Sh(\AAA, \ttt) \subseteq \Mod(\AAA)$ of $k$-linear sheaves defines a localization of $\Mod(\AAA)$. 

\begin{definition}\label{def1}
Let $\ttt$ be a covering system on $\AAA$ and let $f: M \lra N$ be a morphism in $\Mod(\AAA)$. \begin{enumerate}
\item $f$ is a \emph{$\ttt$-epimorphism} if the following holds: for every $y \in N(A)$ there is an $R \in \ttt(A)$ such that $N(g)(y) \in N(A_g)$ is in the image of $f_{A_g}: M(A_g) \lra N(A_g)$ for every $g: A_g \lra A$ in $R$.
\item $f$ is a \emph{$\ttt$-monomorphism} if the following holds: for every $x \in M(A)$ with $f_A(x) = 0 \in N(A)$, there is an $R \in \ttt(A)$ such that $M(g)(x) = 0 \in M(A_g)$ for every $g: A_g \lra A$ in $R$.
\end{enumerate}
\end{definition}

If $\ttt$ is a topology on $\AAA$, we have the following

\begin{lemma}\label{lemtepi}
Let $\ttt$ be a topology on $\AAA$ and $a: \Mod(\AAA) \lra \Sh(\AAA, \ttt)$ the sheafification functor. Let $f: M \lra N$ be a morphism in $\Mod(\AAA)$. 
\begin{enumerate}
\item $f$ is a $\ttt$-epimorphism if and only if $a(f)$ is an epimorphism.
\item $f$ is a $\ttt$-monomorphism if and only if $a(f)$ is a monomorphism.
\end{enumerate}
\end{lemma}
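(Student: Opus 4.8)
The plan is to deduce both statements from a single fact about the localization $\Sh(\AAA,\ttt)\subseteq\Mod(\AAA)$: the description of the presheaves that sheafify to $0$. Recall that $\Mod(\AAA)=\Add(\AAA^{\op},\Ab)$ has kernels and cokernels computed objectwise, and that the sheafification functor $a$, being the left adjoint of the inclusion of a Gabriel localization of a Grothendieck category, is exact; in particular $a$ preserves kernels and cokernels. So it suffices to show that, for $C\in\Mod(\AAA)$, one has $a(C)=0$ if and only if for every $A\in\AAA$ and every $y\in C(A)$ there is a covering $R\in\ttt(A)$ with $C(g)(y)=0$ for all $g\in R$; call such a $C$ \emph{locally zero}.

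This characterization I would obtain from the plus-construction recalled in \cite{lowen1}: writing $C^{+}(A)=\colim_{R\in\ttt(A)}\Hom_{\Mod(\AAA)}(R,C)$, with canonical map $\theta_{C}\colon C\ra C^{+}$, one has that $C^{+}$ is separated, that $\theta_{C^{+}}$ is a monomorphism, and that $a(C)\cong(C^{+})^{+}$; hence $a(C)=0$ iff $C^{+}=0$. If $C$ is locally zero, any element of the filtered colimit $C^{+}(A)$ is represented by a morphism $R\ra C$ (with $R\in\ttt(A)$) whose component at $g\in R$ is some $s_{g}\in C(\mathrm{dom}\,g)$; choosing for each $g$ a covering $R_{g}$ killing $s_{g}$ and applying the transitivity axiom, the subcovering of $R$ generated by the composites $g\circ h$ ($h\in R_{g}$) witnesses that this element is $0$, so $C^{+}=0$. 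Conversely, if $C^{+}=0$ then $\theta_{C}=0$, and applying this to the section over the maximal covering $\AAA(-,A)\in\ttt(A)$ determined by a given $y\in C(A)$ shows that $y$ is killed by some covering. (Alternatively one may simply invoke the known description of the torsion class attached to $\ttt$ from \cite{lowen1}.)

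With this in hand, the two assertions are a matter of unwinding Definition \ref{def1}. For (1), set $C=\Cokern(f)$, computed objectwise, so $C(B)=N(B)/\Beeld(f_{B})$; the requirement ``$N(g)(y)\in\Beeld(f_{A_{g}})$'' of Definition \ref{def1}(1) says exactly that the class $\overline{y}$ of $y$ in $C$ satisfies $C(g)(\overline{y})=0$, so $f$ is a $\ttt$-epimorphism iff $C$ is locally zero iff $a(C)=0$ iff $\Cokern(a(f))=a(\Cokern(f))=0$ iff $a(f)$ is an epimorphism. For (2), set $D=\Kern(f)$, so $D(A)=\{x\in M(A)\mid f_{A}(x)=0\}$; Definition \ref{def1}(2) is precisely the statement that $D$ is locally zero, hence equivalent to $a(D)=0$, i.e. to $\Kern(a(f))=a(\Kern(f))=0$, i.e. to $a(f)$ being a monomorphism. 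The only non-formal ingredient is the locally-zero characterization of the first two paragraphs; once that is available, everything reduces to the objectwise computation of (co)kernels in $\Mod(\AAA)$ together with the exactness of $a$, and I do not expect a genuine obstacle.
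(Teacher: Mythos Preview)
The paper does not actually supply a proof of this lemma: it is stated immediately after Definition~\ref{def1} and the text moves on, treating the result as standard (the notions come from \cite{lowen4,lowen1}). So there is no ``paper's own proof'' to compare against in detail.

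That said, your argument is correct and is exactly the standard route one would take. The key reduction---$a(C)=0$ iff $C$ is locally zero---is the well-known description of the torsion class attached to the topology $\ttt$, and your plus-construction justification is fine (including the transitivity step to pass from a compatible family of coverings to a single subcovering). Once that is in hand, exactness of $a$ together with the objectwise computation of kernels and cokernels in $\Mod(\AAA)$ immediately gives both statements, precisely as you wrote. There is no gap.
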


Consider an adjoint pair
$i: \ccc \lra \Mod(\AAA)$
with left adjoint $a: \Mod(\AAA) \lra \ccc$ induced by $u: \AAA \lra \Mod(\AAA) \lra \ccc$.
Let $\ttt_{\ccc}$ be the covering system for which a subfunctor $r: R \subseteq \AAA(-,A)$ is in $\ttt_{\ccc}(A)$ if and only if $a(r)$ is an epimorphism,  in other words if and only if
$$\oplus_{f \in R(A_f)}u(A_f) \lra u(A)$$
is an epimorphism in $\ccc$. We will call such a subfunctor \emph{$\ccc$-epimorphic}.

The fact whether $(a,i)$ is a localization (i.e. $i$ is fully faithful and $a$ is exact) is entirely encoded in the functor $u: \AAA \lra \ccc$. 

\begin{definition}\label{def2}
Consider a functor $u: \AAA \lra \ccc$ as above and a covering system $\ttt$ on $\AAA$.
\begin{enumerate}
\item $u$ is \emph{generating} if the images $u(A)$ for $A \in \AAA$ are a collection of generators for $\ccc$.
\item $u$ is $\ttt$-full if for every $A \in \AAA$ the canonical morphism $\AAA(-,A) \lra \ccc(u(-), u(A))$ is a $\ttt$-epimorphism.
\item $u$ is $\ttt$-faithful if for every $A \in \AAA$ the canonical morphism $\AAA(-,A) \lra \ccc(u(-), u(A))$ is a $\ttt$-monomorphism.
\end{enumerate}
\end{definition}

\begin{theorem}[\cite{lowen1}]\label{GP}
Let $u: \AAA \lra \ccc$ be as above and let 
$$i: \ccc \lra \Mod(\AAA): C \longmapsto \ccc(u(-), C)$$
be the induced functor with left adjoint $a: \Mod(\AAA) \lra \ccc$ extending $u$.
The following are equivalent:
\begin{enumerate}
\item $(a,i)$ is a localization.
\item $u$ is generating,  $\ttt_{\ccc}$-full and $\ttt_{\ccc}$-faithful.
\end{enumerate}
In this situation, $\ttt_{\ccc}$ is a topology on $\AAA$ and $i$ factors through an equivalence $\ccc \cong \Sh(\AAA, \ttt_{\ccc})$.
\end{theorem}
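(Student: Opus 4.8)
The plan is to follow the blueprint of the Gabriel--Popescu theorem, letting the topology $\ttt_{\ccc}$ absorb the failure of $u$ to be full and faithful; in particular the asserted equivalence $\ccc\cong\Sh(\AAA,\ttt_{\ccc})$ will fall out of the proof of $(2)\Rightarrow(1)$ rather than be established separately.

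For $(1)\Rightarrow(2)$ I would argue directly. If $(a,i)$ is a localization then $a$ is exact, the counit $\varepsilon\colon ai\to\mathrm{id}_{\ccc}$ is invertible, and by a triangle identity $a(\eta_M)$ is invertible for the unit $\eta$ at every $M$. Since the representables generate $\Mod(\AAA)$, applying the exact cocontinuous functor $a$ to an epimorphism $\oplus_{j}\AAA(-,A_{j})\twoheadrightarrow iC$ exhibits $C\cong a(iC)$ as a quotient of $\oplus_{j}u(A_{j})$, so $u$ is generating. The canonical map $\AAA(-,A)\to\ccc(u(-),u(A))$ is the unit $\eta_{\AAA(-,A)}$, hence $a$ sends it to an isomorphism; identifying $\ccc$ via $i$ with the Giraud subcategory it carves out of $\Mod(\AAA)$ — which is $\Sh(\AAA,\ttt)$ for the topology $\ttt$ whose covers are the subfunctors $R$ with $a(r)$ invertible, and $\ttt=\ttt_{\ccc}$ since $a$ is exact and each $r$ a monomorphism, so that $a(r)$ is epi exactly when it is an isomorphism — the functor $a$ becomes the sheafification, and Lemma~\ref{lemtepi} then upgrades the previous sentence to: $u$ is $\ttt_{\ccc}$-full and $\ttt_{\ccc}$-faithful.

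For $(2)\Rightarrow(1)$ the substance is to realize a corestriction of $i$ as an equivalence $\ccc\to\Sh(\AAA,\ttt_{\ccc})$. After noting that $\ttt_{\ccc}$ is a $k$-linear topology, write $a'$ for its sheafification functor. The key step is to show that every $iC=\ccc(u(-),C)$ is a $\ttt_{\ccc}$-sheaf. Separatedness is automatic: for $R\in\ttt_{\ccc}(A)$ the restriction $\ccc(u(A),C)\to\Hom_{\Mod(\AAA)}(R,iC)\cong\ccc(a(R),C)$ is precomposition with the epimorphism $a(r)$. For glueing I must show every $\varphi\colon a(R)\to C$ — equivalently every compatible family $(\varphi_{f}\colon u(A_{f})\to C)_{f\in R}$ — descends along $a(r)$ to a map $u(A)\to C$; this I would carry out as in Gabriel--Popescu, with $\ttt_{\ccc}$-fullness of $u$ providing, after passing to a refinement of $R$, the needed map out of $u(A)$, and $\ttt_{\ccc}$-faithfulness making it well defined. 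Then $i$ corestricts to $i\colon\ccc\to\Sh(\AAA,\ttt_{\ccc})$ with a left adjoint $\bar a$ (the restriction of $a$), and since $\bar a a'$ is left adjoint to the original $i$ it must equal $a$, so $\bar a(a'\AAA(-,A))\cong u(A)$.

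Finally I would check that $(\bar a,i)$ is an adjoint equivalence. Its counit $\bar a iC=a(iC)\to C$ is epi because $u$ is generating and mono because $u$ is $\ttt_{\ccc}$-faithful, hence invertible. For the unit, the sheaves $a'\AAA(-,A)$ generate $\Sh(\AAA,\ttt_{\ccc})$ and $\bar a(a'\AAA(-,A))\cong u(A)$, so it suffices to treat $F=a'\AAA(-,A)$, where the unit is $a'$ applied to the canonical map $\AAA(-,A)\to iu(A)$ (an honest sheaf), an isomorphism by $\ttt_{\ccc}$-fullness, $\ttt_{\ccc}$-faithfulness and Lemma~\ref{lemtepi}. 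Hence $i\colon\ccc\xrightarrow{\ \sim\ }\Sh(\AAA,\ttt_{\ccc})$, and the original $(a,i)$, being this equivalence composed with $(a',\text{inclusion})$, is a localization. The step I expect to be the main obstacle is the sheaf property of $iC$: running the Gabriel--Popescu glueing argument with only the $\ttt_{\ccc}$-local forms of fullness and faithfulness available, while keeping the given left adjoint $a$ and the sheafification $a'$ in sync. Verifying the topology axioms for $\ttt_{\ccc}$ is a secondary, bookkeeping-type point.
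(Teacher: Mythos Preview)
The paper does not prove this theorem: it is stated with the citation \cite{lowen1} and no proof is given in the present text, so there is nothing here to compare your argument against. Your sketch follows the standard Gabriel--Popescu blueprint with the $\ttt_{\ccc}$-local replacements for fullness and faithfulness, which is indeed the approach of the cited source; if you want a genuine comparison you should consult \cite{lowen1} directly.
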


\subsection{A comparison result}\label{parGPref}
Let $\AAA$ be a $k$-linear category, $\ccc$ a $k$-linear Grothendieck category and $u: \AAA \lra \ccc$ a $k$-linear functor.
Let $\ttt$ be a covering system on $\AAA$. In this section we investigate the relation between $\ttt$ and $\ttt_{\ccc}$. In particular, in Corollary \ref{cor1}, we obtain a variant of Theorem \ref{GP} in which, for a given topology $\ttt$ on $\AAA$, we characterize when $u$ gives rise to a localization \emph{with $\ttt_{\ccc} = \ttt$} (\emph{and hence, with} $\ccc \cong \Sh(\ccc, \ttt)$). This characterization is a $\ttt$-local version of the well known characterization of module categories as Grothendieck categories with a set of finitely generated projective generators.

\begin{definition}
Consider $u: \AAA \lra \ccc$ as above and let $\ttt$ be a covering system on $\AAA$.
\begin{enumerate}
\item $u$ is \emph{$\ttt$-projective} if for every $\ccc$-epimorphism $c: X \lra Y$, the morphism $$i(c): \ccc(u(-), X) \lra \ccc(u(-), Y)$$ is a $\ttt$-epimorphism.
\item $u$ is \emph{$\ttt$-finitely presented} if for every filtered colimit $\colim_i X_i$ in $\ccc$ the canonical morphism $$\phi: \colim_i \ccc(u(-), X_i) \lra \ccc(u(-), \colim_i X_i)$$ is a $\ttt$-epimorphism and a $\ttt$-monomorphism.
\item $u$ is  \emph{$\ttt$-ample} if for every $R \in \ttt(A)$, the canonical morphism
$$\oplus_{f \in R(A_f)}u(A_f) \lra u(A)$$ is a $\ccc$-epimorphism.
\end{enumerate}
\end{definition}

\begin{lemma}\label{lemtc}
Consider $u: \AAA \lra \ccc$ as above and suppose that $u$ induces a localization $(i,a)$. Then $u$ is $\ttt_{\ccc}$-projective, $\ttt_{\ccc}$-finitely presented and $\ttt_{\ccc}$-ample.
\end{lemma}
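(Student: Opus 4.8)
The plan is to verify each of the three properties directly from the definitions, using the hypothesis that $u$ induces a localization $(a,i)$ together with the characterization of $\ttt_{\ccc}$-epimorphisms and $\ttt_{\ccc}$-monomorphisms via sheafification furnished by Lemma \ref{lemtepi}. Recall that since $(a,i)$ is a localization, the sheafification functor for $\ttt_{\ccc}$ can be identified (up to the equivalence $\ccc \cong \Sh(\AAA, \ttt_{\ccc})$ of Theorem \ref{GP}) with $a$, and $i$ is the fully faithful inclusion; in particular the unit $M \to i a (M)$ of the adjunction is precisely the $\ttt_{\ccc}$-sheafification morphism. Thus, by Lemma \ref{lemtepi}, a morphism $f$ in $\Mod(\AAA)$ is a $\ttt_{\ccc}$-epimorphism (resp. $\ttt_{\ccc}$-monomorphism) if and only if $a(f)$ is an epimorphism (resp. monomorphism) in $\ccc$. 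This is the one observation that drives all three verifications.

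For $\ttt_{\ccc}$-projectivity, let $c: X \lra Y$ be a $\ccc$-epimorphism, i.e. an actual epimorphism in $\ccc$. We must show $i(c): \ccc(u(-),X) \lra \ccc(u(-),Y)$ is a $\ttt_{\ccc}$-epimorphism, which by the above amounts to showing $a(i(c))$ is an epimorphism in $\ccc$. But since $i$ is fully faithful, the counit $a i \Rightarrow \mathrm{id}_{\ccc}$ is an isomorphism, so $a(i(c))$ is isomorphic to $c$, which is an epimorphism by assumption. For $\ttt_{\ccc}$-ampleness, take $R \in \ttt_{\ccc}(A)$ with inclusion $r: R \subseteq \AAA(-,A)$. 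By the very definition of $\ttt_{\ccc}$, $r$ being a covering means exactly that $a(r)$ is an epimorphism in $\ccc$, i.e. that $\oplus_{f \in R(A_f)} u(A_f) \lra u(A)$ is a $\ccc$-epimorphism; this is immediate since $a$ takes $\AAA(-,A)$ to $u(A)$ and commutes with the coproduct, so there is essentially nothing to prove beyond unwinding definitions.

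The $\ttt_{\ccc}$-finitely presented property is the one requiring the most care, and I expect it to be the main obstacle. Let $\colim_i X_i$ be a filtered colimit in $\ccc$; we must show the canonical map $\phi: \colim_i \ccc(u(-), X_i) \lra \ccc(u(-), \colim_i X_i)$ is both a $\ttt_{\ccc}$-epimorphism and a $\ttt_{\ccc}$-monomorphism, equivalently (Lemma \ref{lemtepi}) that $a(\phi)$ is an isomorphism in $\ccc$. Here one applies $a$ to $\phi$ and uses: first, that $a = \colim$-preserving left adjoint, so $a(\colim_i \ccc(u(-),X_i)) \cong \colim_i a(\ccc(u(-), X_i)) = \colim_i a i (X_i) \cong \colim_i X_i$ using again that $ai \cong \mathrm{id}$; second, that $a(\ccc(u(-), \colim_i X_i)) = a i(\colim_i X_i) \cong \colim_i X_i$. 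Under these identifications one checks that $a(\phi)$ becomes the identity of $\colim_i X_i$, by a naturality/compatibility diagram chase tracing the canonical maps $X_i \to \colim_i X_i$ through $i$, through $a$, and through the colimit comparison maps. The subtle point is ensuring all these canonical identifications are compatible, i.e. that the triangle identifying $a(\phi)$ with $\mathrm{id}$ actually commutes; this is a formal but slightly delicate argument about adjunctions, units, counits and the interchange of $a$ with filtered colimits, and is where the bulk of the work lies. Once $a(\phi)$ is seen to be an isomorphism, both halves of the $\ttt_{\ccc}$-finitely presented condition follow at once.
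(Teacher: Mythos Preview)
Your proposal is correct and follows essentially the same approach as the paper's proof: for $\ttt_{\ccc}$-projectivity you use $a(i(c)) \cong c$, for $\ttt_{\ccc}$-ampleness the definition of $\ttt_{\ccc}$, and for $\ttt_{\ccc}$-finite presentation the fact that $a(\phi)$ is an isomorphism because $a$ preserves filtered colimits and $ai \cong \mathrm{id}$. The only difference is that you flag the finitely presented case as ``the main obstacle'' requiring a ``slightly delicate'' compatibility check, whereas the paper dispatches it in one line; the naturality you worry about is routine (it is just the naturality of the counit $ai \Rightarrow \mathrm{id}$ applied to the cocone maps $X_i \to \colim_i X_i$), so you could safely trim that discussion.
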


\begin{proof}
For (1), it suffices to note by Lemma \ref{lemtepi} that $a(i(c)) \cong c$ is an epimorphism. For (2), we similarly note that $a(\phi)$ is an isomorphism since $a$ commutes with the filtered colimit. Finally (3) is obvious by definition of $\ttt_{\ccc}$.
\end{proof}

\begin{proposition}\label{comp}
Consider $u: \AAA \lra \ccc$ as above and let  $\ttt$ be a covering system on $\AAA$.
\begin{enumerate}
\item Consider the following:
\begin{enumerate}
\item $\ttt_{\ccc} \subseteq \ttt$
\item $u$ is $\ttt$-full, $\ttt$-faithful, $\ttt$-projective and $\ttt$-finitely presented.
\end{enumerate}
We have:
\begin{enumerate}
\item[(i)] if $u$ induces a localization, then (a) implies (b).
\item[(ii)] if $\ttt$ is a topology, then (b) implies (a).
\end{enumerate}
\item The following are equivalent:
\begin{enumerate}
\item $\ttt \subseteq \ttt_{\ccc}$
\item $u$ is $\ttt$-ample.
\end{enumerate}
\item The following are equivalent:
\begin{enumerate}
\item $u$ induces a localization and $\ttt_{\ccc} = \ttt$.
\item $\ttt$ is a topology and $u$ is generating, $\ttt$-full, $\ttt$-faithful, $\ttt$-projective, $\ttt$-finitely presented and $\ttt$-ample.
\end{enumerate}
\end{enumerate}
\end{proposition}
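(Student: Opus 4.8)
The plan is to deduce the three-part statement of Proposition~\ref{comp} by combining Theorem~\ref{GP} with parts (1) and (2) of the same proposition, so the real content is in (1) and (2), and (3) is essentially a bookkeeping step. I would prove the parts in the order (2), (1), (3).

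\textbf{Part (2).} The equivalence $\ttt \subseteq \ttt_{\ccc} \Leftrightarrow u$ is $\ttt$-ample is almost a tautology once one unwinds the definitions. By definition, $R \in \ttt_{\ccc}(A)$ exactly means that $\oplus_{f \in R(A_f)} u(A_f) \lra u(A)$ is a $\ccc$-epimorphism, i.e.\ that $a(r)$ is an epimorphism for the inclusion $r\colon R \subseteq \AAA(-,A)$. On the other hand, $u$ is $\ttt$-ample precisely when this holds for \emph{every} $R \in \ttt(A)$ and every $A$. So $\ttt \subseteq \ttt_{\ccc}$ (meaning $\ttt(A) \subseteq \ttt_{\ccc}(A)$ for all $A$) says exactly that every $\ttt$-cover is $\ccc$-epimorphic, which is $\ttt$-ampleness. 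This part needs no hypotheses on $\ttt$ or on whether $u$ induces a localization.

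\textbf{Part (1).} For (i), assume $u$ induces a localization and $\ttt_{\ccc} \subseteq \ttt$. By Lemma~\ref{lemtc}, $u$ is $\ttt_{\ccc}$-projective and $\ttt_{\ccc}$-finitely presented; and by Theorem~\ref{GP}, $u$ is $\ttt_{\ccc}$-full and $\ttt_{\ccc}$-faithful. Since every one of ``$\ttt$-epimorphism'' and ``$\ttt$-monomorphism'' is a condition that is \emph{preserved under enlarging} the covering system (a cover witnessing the condition for $\ttt_{\ccc}$ is also a cover for $\ttt$), the inclusion $\ttt_{\ccc} \subseteq \ttt$ upgrades each of these four properties from $\ttt_{\ccc}$ to $\ttt$, giving (b). For (ii), assume $\ttt$ is a topology and (b) holds; we must show $\ttt_{\ccc} \subseteq \ttt$, i.e.\ that every $\ccc$-epimorphic subfunctor $r\colon R \subseteq \AAA(-,A)$ is a $\ttt$-cover. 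The strategy is to factor the natural map $\AAA(-,A) \to \ccc(u(-),u(A)) = i(u(A))$ and use $\ttt$-projectivity applied to the $\ccc$-epimorphism $\oplus_{f\in R(A_f)} u(A_f) \to u(A)$: this gives that $i$ of that map, namely $\ccc(u(-), \oplus_f u(A_f)) \to \ccc(u(-), u(A))$, is a $\ttt$-epimorphism. Using $\ttt$-full/$\ttt$-faithful to compare $\AAA(-,A)$ with $\ccc(u(-),u(A))$ up to $\ttt$-mono/epi, and $\ttt$-finite-presentation to commute $\ccc(u(-),-)$ past the (filtered colimit of finite) direct sum $\oplus_f u(A_f)$, one identifies $R$ itself, up to $\ttt$-local isomorphism, with the image of a map that is a $\ttt$-epimorphism onto $\AAA(-,A)$; since $\ttt$ is a topology, being a $\ttt$-epimorphic subfunctor of a representable forces $R \in \ttt(A)$ (the local character / $\ttt$-saturation axiom). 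This diagram chase, keeping careful track of which maps are $\ttt$-epi versus $\ttt$-mono and where the topology axioms are invoked, is the main obstacle in the whole proposition.

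\textbf{Part (3).} Finally, (3) follows by assembling the pieces. If $u$ induces a localization with $\ttt_{\ccc} = \ttt$, then $\ttt_{\ccc}$ is a topology by Theorem~\ref{GP}, hence so is $\ttt$; $u$ is generating, $\ttt_{\ccc}$-full and $\ttt_{\ccc}$-faithful by Theorem~\ref{GP}, hence $\ttt$-full and $\ttt$-faithful; and $u$ is $\ttt_{\ccc}$-projective, $\ttt_{\ccc}$-finitely presented and $\ttt_{\ccc}$-ample by Lemma~\ref{lemtc}, hence has the corresponding $\ttt$-properties. Conversely, given the list in (b): $\ttt$-ample gives $\ttt \subseteq \ttt_{\ccc}$ by part (2), and $\ttt$-full, $\ttt$-faithful, $\ttt$-projective and $\ttt$-finitely presented (together with $\ttt$ being a topology) give $\ttt_{\ccc} \subseteq \ttt$ by part (1)(ii); hence $\ttt_{\ccc} = \ttt$. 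It remains to see that $u$ induces a localization, for which we verify condition (2) of Theorem~\ref{GP}: $u$ is generating by hypothesis, and $\ttt_{\ccc}$-full and $\ttt_{\ccc}$-faithful because $\ttt_{\ccc} = \ttt$ and $u$ is $\ttt$-full and $\ttt$-faithful. Thus $(a,i)$ is a localization and $\ttt_{\ccc} = \ttt$, completing the equivalence.
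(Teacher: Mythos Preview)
Your proposal is correct and follows essentially the same approach as the paper. Parts (2), (1)(i), and (3) match the paper's reasoning exactly (the paper in fact leaves (3) implicit, so your explicit bookkeeping is more detailed than the original). For (1)(ii), you outline the same strategy the paper executes: apply $\ttt$-projectivity to the epimorphism $\oplus_{f\in R}u(A_f)\to u(A)$ at the identity $1_{u(A)}$, then use $\ttt$-finite presentation to pass to a finite sub-sum, then $\ttt$-fullness and $\ttt$-faithfulness to replace morphisms in $\ccc$ by morphisms in $\AAA$, finally glueing the nested $\ttt$-covers to obtain a $\ttt$-cover $T\subseteq R$ and invoking upward closure; the paper carries this out as an explicit element chase with four layers of nested covers $g,h,w,v$, but the structure is identical to what you sketch.
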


\begin{proof}
(2) is a tautology by definition of $\ttt_{\ccc}$. (1,i) immediately follows from Lemma \ref{lemtc} and Theorem \ref{GP}. Let us show (1,ii). Consider $R \in \ttt_{\ccc}(A)$, so the canonical morphism
$$c: \oplus_{f \in R)}u(A_f) \lra u(A)$$
ia an epimorphism in $\ccc$. Since $u$ is $\ttt$-projective, the induced $i(c)$ is a $\ttt$-epimorphism. In particular, looking at $1_{u(A)} \in \ccc(u(A), u(A))$, there is a $\ttt$-covering $g: A_g \lra A$ such that for every $g$, we have $u(g) = cb_g$ for some $b_g: u(A_g) \lra \oplus_{f \in R}u(A_f)$. Since $u$ is $\ttt$-finitely presented, there is a $\ttt$-covering $h: B_{gh} \lra A_g$ for every $g$ so that for every $h$ the composition $b_gu(h)$ factors through
$$a_{gh}: u(B_{gh}) \lra \oplus_{f \in R'}u(A_f)$$
for a finite subset $R' \subseteq R$. Writing $p_f$ for the projections of $\oplus_{f \in R'}u(A_f)$, we now have
$$u(gh) = \sum_{f \in R'}u(f)p_fa_{gh}.$$
Since $u$ is $\ttt$-full, we can find for each $f, g, h$ a $\ttt$-covering  $w: W_{ghw} \lra B_{gh}$ for which $p_fa_{gh}u(w) = u(t_{fghw})$.
We thus have
$$u(ghw) = u(\sum_{f \in R'}ft_{fghw}).$$
Finally, since $u$ is $\ttt$-faithful, we can find further $\ttt$-coverings $v: V_{ghwv} \lra W_{ghw}$ for which
$$ghwv = \sum_{f \in R'}ft_{fghw}v,$$ 
whence the maps $ghwv$ belong to the $\ttt_{\ccc}$-covering $R$. Glueing all the $\ttt$-coverings together, we thus find a $\ttt$-covering $T \in \ttt(A)$ with $T \subseteq R$. It follows that $R \in \ttt(A)$, as desired.
\end{proof}

\begin{theorem}\label{cor1}
Consider $u: \AAA \lra \ccc$ as above and let $\ttt$ be a topology on $\AAA$.
The following are equivalent:
\begin{enumerate}
\item $u$ induces a localization and $i: \ccc \lra \Mod(\AAA)$ factors through an equivalence $\ccc \cong \Sh(\AAA, \ttt)$.
\item $u$ is generating, $\ttt$-full, $\ttt$-faithful, $\ttt$-projective, $\ttt$-finitely presented and $\ttt$-ample.
\end{enumerate}
\end{theorem}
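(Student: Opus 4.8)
The plan is to reduce Theorem \ref{cor1} to part (3) of Proposition \ref{comp}. Since $\ttt$ is assumed to be a topology, condition (2) of the theorem is literally condition (b) of Proposition \ref{comp}(3) (the extra clause ``$\ttt$ is a topology'' appearing there is now part of the standing hypothesis). By Proposition \ref{comp}(3), condition (b) is equivalent to the assertion that $u$ induces a localization and $\ttt_{\ccc} = \ttt$. So everything comes down to showing that this assertion is equivalent to condition (1) of the theorem, and I would prove the two implications separately.

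For the direction from ``$u$ induces a localization and $\ttt_{\ccc} = \ttt$'' to (1), nothing new is needed: Theorem \ref{GP} says that when $u$ induces a localization, $\ttt_{\ccc}$ is a topology and $i$ factors through an equivalence $\ccc \cong \Sh(\AAA, \ttt_{\ccc})$; substituting $\ttt_{\ccc} = \ttt$ gives exactly (1).

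For the converse I would argue as follows. Assume (1), and write the given factorization as $i = \iota \circ e$ with $e \colon \ccc \to \Sh(\AAA, \ttt)$ an equivalence and $\iota \colon \Sh(\AAA, \ttt) \hookrightarrow \Mod(\AAA)$ the inclusion. Then $i$ is fully faithful, and its left adjoint $a$ is exact, being (up to the equivalence $e$) the sheafification functor $\Mod(\AAA) \to \Sh(\AAA, \ttt)$; hence $(a,i)$ is a localization, so $u$ induces a localization. Now Theorem \ref{GP} applies and yields a \emph{second} factorization of the \emph{same} functor $i$ through an equivalence $\ccc \cong \Sh(\AAA, \ttt_{\ccc})$. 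Since the essential image of $i$ is, on the one hand, $\Sh(\AAA, \ttt)$ and, on the other hand, $\Sh(\AAA, \ttt_{\ccc})$, these two localizing subcategories of $\Mod(\AAA)$ coincide; invoking the bijection between linear topologies on $\AAA$ and localizations of $\Mod(\AAA)$ recalled in \S\ref{parGP}, I conclude $\ttt = \ttt_{\ccc}$, as needed.

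The only genuine (and rather minor) point to get right is the bookkeeping in this last step: one must observe that ``$i$ factors through an equivalence with $\Sh(\AAA, \ttt)$'' pins down the essential image of $i$ to be exactly $\Sh(\AAA, \ttt)$, so that the uniqueness clause of the Gabriel correspondence can be applied. Everything else is a formal consequence of Proposition \ref{comp} and Theorem \ref{GP}.
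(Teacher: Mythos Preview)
Your proof is correct and follows the same route as the paper: reduce to Proposition~\ref{comp}(3) and identify condition~(1) of the theorem with condition~(a) there via Theorem~\ref{GP} and the Gabriel correspondence between topologies and localizations. The paper's own proof is a single sentence (``This immediately follows from Proposition~\ref{comp}''), leaving the identification of (1) with ``$u$ induces a localization and $\ttt_{\ccc}=\ttt$'' implicit; your spelling-out of that step is accurate, though note that the clause ``$u$ induces a localization'' is already part of~(1), so the paragraph re-deriving it from the factorization is redundant.
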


\begin{proof}
This immediately follows from Proposition \ref{comp}.
\end{proof}

\begin{remark}
If we take $\ttt = \ttt_{\mathrm{triv}}$ the trivial topology on $\AAA$, for which the only coverings are the representable functors, then in Corollary \ref{cor1} we obtain the well known equivalence between:
\begin{enumerate}
\item $u$ induces an equivalence $\ccc \cong \Mod(\AAA)$.
\item $u$ is the fully faithful inclusion of a set of finitely generated projective generators.
\end{enumerate}
\end{remark}

\begin{remark}
In \cite[Theorem 2.22]{lowen4}, related ideas were used in order to characterize stacks of sheaves over a fibered category on a topological space.
\end{remark}

\section{Quasi-coherent modules over $\Z$-algebras}\label{parparZalg}

$\Z$-algebras were introduced as a convenient generalization of $\Z$-graded algebras (see \cite{bondalpolishchuk}). The category $\Gr(A)$ of graded modules over a $\Z$-graded algebra $A$ is replaced by the category $\Mod(\AAA)$ of modules over a $\Z$-algebra $\AAA$, and most notions can be immediately generalized by using their categorical incarnations. An exception is the notion of finite generation \emph{as a $\Z$-algebra}, which we define in \S \ref{parbas}. 

For geometric applications, one is interested in a quotient category $\Qgr(A)$ of $\Gr(A)$ for a $\Z$-graded algebra $A$, to be considered as the (category of quasicoherent sheaves on the) non-commutative scheme $\Proj(A)$.
The reason for this is Serre's theorem \cite{serre}, and its non-commutative generalization \cite{artinzhang2}. As stated in \cite{staffordvandenbergh}, the Artin-Zhang theorem has an analogue for $\Z$-algebras. Classically, these theorems are formulated in terms of the small abelian categories of finitely generated objects under a noetherian assumption. Motivated by analytic applications  like \cite{polishchukschwarz}, a version of the theorem under weaker ``coherence'' hypotheses  was given in \cite{polishchuk}.

In our approach, we define a category $\Qmod(\AAA)$ of ``quasi-coherent modules'' for a $\Z$-algebra $\AAA$ in complete generality, using a certain topology $\ttt_{\mathrm{tails}}$.
If the category $\Tors(\AAA)$ of torsion modules is localizing, our definition generalizes the classical one. In \S \ref{parqcoh} we investigate some situations in which the topology $\ttt_{\mathrm{tails}}$ has a very easy description. In particular, we show that this is the case for a positively graded, connected, finitely generated $\Z$-algebra, or for a noetherian $\Z$-algebra.

Finally, in \S \ref{parchar}, based on the results of \S \ref{parGPref}, we give a characterization of Grothendieck categories that are equivalent to the category of quasi-coherent modules over a certain associated $\Z$-algebra $\AAA$ (Theorem \ref{thechar}).

\subsection{From $\Z$-graded algebras to $\Z$-algebras}\label{parZalg} By definition, a $\Z$-algebra is simply a $k$-linear category $\AAA$ with an isomorphism $\Ob(\AAA) \cong \Z$. $\Z$-algebras naturally occur when expressing the category $\Gr(A)$ over a $\Z$-graded $k$-algebra $A$ as a module category. Let $A$ be a $\Z$-graded $k$-algebra and let $\Gr(A)$ be the category of $\Z$-graded right $A$-modules. Let $(1)$ be the shift to the left on $\Gr(A)$, $(n) = (1)^n$, and consider the shifted objects $(A(n))_{n \in \Z}$ in $\Gr(A)$. For any $M \in \Gr(A)$, we have
$$\Gr(A)(A(n), M) \cong M_{-n}$$
and consequently the objects $A(n)$ constitute a set of finitely generated projective generators of $\Gr(A)$. Let $\AAA = \AAA(A)$ be the full subcategory of $\Gr(A)$ spanned by the $(A(n))_{n \in \Z}$. Then $\AAA$ becomes a $\Z$-algebra by renaming the object $A(-n)$ by $n$, and we have
$$\AAA(n,m) = \Gr(A)(A(-n), A(-m)) = A_{n - m}.$$
There is an induced equivalence of categories
$$\Gr(A) \cong \Mod(\AAA): M \longmapsto \Gr(A)(A(-?),M) = M_{?}.$$

\subsection{Finitely generated $\Z$-algebras}\label{parbas}
Most definitions are easily given for (modules over) a $\Z$-algebra: they are simply the categorical notions in the category $\Mod(\AAA)$. This holds for example for finitely generated, finitely presented, coherent and noetherian modules, and the associated notions of $\AAA$ being coherent or noetherian. However, it is worthwhile to make some things explicit, in particular in order to obtain a good notion of finite generation of $\AAA$ \emph{as a $\Z$-algebra}.

A $\Z$-algebra $\AAA$ is called \emph{positively graded} if $\AAA(m,n) = 0$ for $m < n$. From now on, we consider a positively graded $\Z$-algebra $\AAA$. 

Concretely, an $\AAA$-module $M$ is given by $k$-modules $(M_n)_{n \in \Z}$ and actions
\begin{equation}\label{action}
M_m \otimes \AAA(n,m) \lra M_n: (x, a) \longmapsto xa
\end{equation}
for $n \geq m$. Consequently, for every $\AAA$-module $M$ and $m \in \Z$, there is a truncated submodule $M_{\geq m}$ of $M$ with
$$(M_{\geq m})_n = \begin{cases} M_n &\text{if}\,\,\, n \geq m; \\ 0 &\text{otherwise.} \end{cases}$$
A corresponding quotient module $M_{< m}$ is defined by
$$0 \lra M_{\geq m} \lra M \lra M_{< m} \lra 0.$$
Of particular interest are the representable modules $\AAA(-,m)$ for $m \in \Z$, whose non-zero values are $\AAA(m,m), \AAA(m+1, m), \dots$. In the case where $\AAA = \AAA(A)$ for a $\Z$-graded algebra $A$, this is precisely the shifted object $A(-m)$.

Although $\AAA$ is not quite an algebra, it can be useful to think in terms of ideals in $\AAA$. A right  ideal $I$ in $\AAA$ is a collection of submodules $I(n,m) \subseteq \AAA(n,m)$ such that for $x \in I(n,m)$ and $a \in \AAA(k,n)$ we have $xa \in I(k,m)$. Left and two sided ideals are defined similarly. Defining a right ideal $I$ in $\AAA$ is equivalent to simultaneously defining submodules
$$I_m = I(-,m) \subseteq \AAA(-,m)$$
of all the representable functors. Examples of right ideals are $\AAA$ itself, and $\AAA_{\geq n}$ defined through the submodules
$$\AAA(-,m)_{\geq n} \subseteq \AAA(-,m).$$
We also have a right ideal $\AAA_+$ defined through the submodules
$$\AAA(-,m)_{\geq m+1} \subseteq \AAA(-,m)$$
which excludes all the pieces $\AAA(m,m)$.
If $M$ is an $\AAA$-module and we are given arbitrary subsets $X_n \subseteq M_m$, and $I$ is a right ideal in $\AAA$, then we can form the submodule
$$XI = \{\sum_{i=0}^kx_ia_i \,\,|\,\, x_i \in X, a_i \in I\} \subseteq M.$$
A module $M$ is \emph{finitely generated} if there exists an epimorphism $\oplus_{i = 1}^k\AAA(-,m_k) \lra M$, or equivalently, if there exist elements $x_1, \dots x_k$ in $M$ with $M = \{x_1, \dots, x_n\}\AAA$.
We will say that a right ideal $I$ in $\AAA$ is \emph{finitely generated} if each of the corresponding submodules $I_m \subseteq \AAA(-,m)$ is finitely generated.

Now we want to formulate what it means for $\AAA$ to be finitely generated as a $\Z$-algebra. To do so, we define the \emph{degree} of an element $a \in \AAA(n,m)$ to be $|a| = n-m$. Hence, the elements of degree $d$ are precisely the elements in $$\bigcup_{n \in \Z}\AAA(n+d, n).$$
We say that a collection of elements $X \subseteq \AAA$ (given by subsets $X(n,m) \subseteq \AAA(n,m)$) \emph{generates} $\AAA$ if every element in $\AAA$ can be written as a (finite) $k$-linear combination of (finite) products of elements in $X$ and elements $1_m \in \AAA(m,m)$. We say that $\AAA$ is \emph{finitely generated} (by $X$) if it is generated by a collection $X$ such that for every $m \in \Z$, the set
$$X_m = \bigcup_{d \in \N}X(m+d,m)$$
is finite. Further, $\AAA$ is called \emph{locally finite} if all the $\AAA(n,m)$ are finitely generated $k$-modules, and \emph{connected} if moreover $\AAA(n,n) = k$ for every $n \in \Z$.

\begin{lemma}
If $\AAA$ is finitely generated and connected, then $\AAA$ is locally finite.
\end{lemma}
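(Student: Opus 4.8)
The plan is to show that under the hypotheses, each space $\AAA(n,m)$ is a finitely generated $k$-module, and then that finite generation plus connectedness forces the degrees that can contribute to $\AAA(n,m)$ to be bounded, so only finitely many products are involved.

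First I would fix a generating collection $X$ witnessing finite generation of $\AAA$, so that each $X_m = \bigcup_{d \in \N} X(m+d,m)$ is finite. By definition, every element of $\AAA(n,m)$ is a finite $k$-linear combination of products of elements drawn from the $X(\cdot,\cdot)$ together with identity elements $1_j$. A product that lands in $\AAA(n,m)$ is a composable string $a_r \cdots a_1$ with the source of $a_1$ equal to $m$ and the target of $a_r$ equal to $n$; since degrees add along composites and positive grading forces every $a_i$ to have degree $|a_i| \geq 0$, the total is $\sum_i |a_i| = n - m$. The key point, which I expect to be the main (though modest) obstacle, is to rule out the possibility of arbitrarily long such strings contributing genuinely new elements: a factor $a_i$ of degree $0$ lies in some $\AAA(j,j)$, and by connectedness $\AAA(j,j) = k\cdot 1_j$, so every degree-$0$ generator is already a scalar multiple of an identity and can be absorbed. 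Hence, after absorbing scalars, any product reduces to one in which every factor has strictly positive degree, and there are at most $n-m$ such factors.

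With that reduction in hand, I would argue that $\AAA(n,m)$ is spanned over $k$ by the (finite) set of all products $a_r \cdots a_1$ with $r \leq n-m$, each $a_i$ of positive degree and chosen from the appropriate $X(\cdot,\cdot)$, with matching sources and targets, and with $\sum_i |a_i| = n-m$. Finiteness of this spanning set follows because at each of the $\leq n-m$ stages there are only finitely many choices: once the current object $j$ is fixed, the available positive-degree generators out of $j$ form a subset of the finite set $X_j$ (indeed of $\bigcup_{1 \le d \le n-m} X(j+d,j)$), and each choice determines the next object. Thus $\AAA(n,m)$ is a finitely generated $k$-module for all $n,m$, which is exactly the statement that $\AAA$ is locally finite.

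One should be slightly careful that ``generates'' in the hypothesis allows identity elements $1_m$ to appear as factors in the products; these have degree $0$ and cause no trouble since inserting or deleting an identity at a composable spot does not change the product. It may also be worth noting explicitly that for $n < m$ there is nothing to prove, as $\AAA(n,m) = 0$ by positive gradedness, and for $n = m$ connectedness gives $\AAA(n,n) = k$ directly; so the content is entirely in the case $n > m$, handled as above.
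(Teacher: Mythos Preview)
Your proof is correct and follows essentially the same approach as the paper: both use connectedness to reduce to generators of strictly positive degree, then observe that a product landing in $\AAA(n,m)$ involves only factors from the finitely many $X(n_i,m_i)$ with $m \leq m_i < n_i \leq n$, giving a finite spanning set. Your version is more explicit about absorbing degree-$0$ factors and about the bound $r \leq n-m$ on the length of a product, whereas the paper simply starts with ``Suppose $\AAA$ is finitely generated by $X \subseteq \AAA_+$'' and leaves these points implicit.
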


\begin{proof}
Suppose $\AAA$ is finitely generated by $X \subseteq \AAA_+$. Consider the $k$-module $\AAA(n,m)$. Then the only elements in $X$ that can appear in a product $a = x_{i_l}\dots x_{i_1} \in \AAA(n,m)$ are elements $x_i \in X(n_i, m_i)$ with $n \geq n_i > m_i \geq m$. Clearly, the total number of such products is finite as soon as every $X(n_i, m_i)$ is finite. \end{proof}

For a positively graded, connected $\Z$-algebra $\AAA$, we have the following characterizations of $\AAA$ being finitely generated:

\begin{proposition}\label{propfin}
Let $\AAA$ be a positively graded, connected $\Z$-algebra. The following are equivalent:
\begin{enumerate}
\item $\AAA$ is finitely generated as a $\Z$-algebra.
\item The ideals $\AAA_{\geq n}$ are finitely generated for all $n \in \Z$, i.e. the modules $\AAA(-,m)_{\geq n}$ are finitely generated for all $n, m \in \Z$.
\item The ideal $\AAA_{+}$ is finitely generated, i.e the modules $\AAA(-,m)_{\geq m+1}$ are finitely generated for all $m \in \Z$.
\end{enumerate}
\end{proposition}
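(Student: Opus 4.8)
The plan is to prove the cyclic chain of implications $(1)\Rightarrow(2)\Rightarrow(3)\Rightarrow(1)$. Of these, $(2)\Rightarrow(3)$ is immediate: the module $\AAA(-,m)_{\geq m+1}$ defining $\AAA_+$ is exactly the module occurring in $(2)$ for $n=m+1$, so if every $\AAA_{\geq n}$ is finitely generated then so is $\AAA_+$.

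For $(3)\Rightarrow(1)$ I would argue as follows. For each $m$, finite generation of $\AAA(-,m)_{\geq m+1}$ lets us choose a finite set $X_m$ of elements of this module generating it; every $x\in X_m$ lies in some $\AAA(p,m)$ with $p\geq m+1$, so $|x|\geq 1$, i.e.\ $X:=\bigcup_m X_m\subseteq\AAA_+$, with each $X_m$ finite. It remains to check that $X$ generates $\AAA$ in the sense of \S\ref{parbas}, which I would do by induction on $d=|a|$ for $a\in\AAA(m+d,m)$. If $d=0$, connectedness gives $a\in k\cdot 1_m$. If $d\geq 1$, then $a$ lies in $\AAA(-,m)_{\geq m+1}$, so since the $x\in X_m$ generate this module we can write $a=\sum_i x_i c_i$ with $x_i\in X_m$ and $c_i\in\AAA(m+d,\,m+|x_i|)$; as $|c_i|=d-|x_i|<d$, the induction hypothesis expresses each $c_i$, hence $a$, as a $k$-linear combination of products of elements of $X$ and identities. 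Since each $X_m$ is finite, $\AAA$ is finitely generated as a $\Z$-algebra.

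The substantial implication is $(1)\Rightarrow(2)$. Assume $\AAA$ is finitely generated by a collection $X$; deleting all elements of degree $0$ (each of which, by connectedness, equals a scalar multiple of some $1_m$ and is therefore redundant) we may assume $X\subseteq\AAA_+$, still with each $X_m$ finite. Fix $n,m$. If $n\leq m$ then $\AAA(-,m)_{\geq n}=\AAA(-,m)$ is representable, hence finitely generated, so assume $n>m$. The idea is to use ``threshold-crossing'' products: let $Y$ be the set of composable products $w=x_l\cdots x_1$ with all $x_j\in X$ and $x_1$ having target $m$, such that the truncated product $x_{l-1}\cdots x_1$ has degree $<n-m$ while $|w|\geq n-m$ (for $l=1$ this just says $0<n-m\leq|x_1|$). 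I would first verify that $Y$ is finite, and then that $Y$ generates $\AAA(-,m)_{\geq n}$ as a module; the latter is straightforward, since $Y\subseteq\AAA(-,m)_{\geq n}$ by construction, while conversely any $a\in\AAA(p,m)$ with $p\geq n$ is a $k$-linear combination of products of elements of $X$ with target $m$, each of which, truncated at the first moment its accumulated degree reaches $n-m$, is visibly an element of $Y$ followed by a further morphism of $\AAA$, hence lies in $Y\AAA$. Thus $\AAA(-,m)_{\geq n}=Y\AAA$ is finitely generated.

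The one point requiring care — and the only real obstacle — is the finiteness of $Y$. Here one uses that each generator has degree $\geq 1$, so that along any product the accumulated degrees strictly increase; combined with the threshold condition this forces every factor of a $w\in Y$ to have target in the finite range $\{m,m+1,\dots,n-1\}$, hence to belong to the finite set $X_m\cup X_{m+1}\cup\cdots\cup X_{n-1}$. Letting $D$ bound the degrees of elements of this set, every $w\in Y$ has degree at most $n-m-1+D$ and, again by strict increase of degrees, length at most $n-m-1+D$, leaving only finitely many such $w$. All remaining verifications are routine manipulations with the definitions of finitely generated modules and ideals over a $\Z$-algebra.
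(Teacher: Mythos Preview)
Your proof is correct and follows essentially the same route as the paper's: the same cyclic chain, the same induction on degree for $(3)\Rightarrow(1)$, and for $(1)\Rightarrow(2)$ your threshold-crossing products $Y$ are exactly the paper's generators $w'x$, with finiteness argued via the observation that every factor has target in $\{m,\dots,n-1\}$. One cosmetic slip: under the paper's composition convention it is the \emph{leftmost} factor of $x_l\cdots x_1\in\AAA(-,m)$ (namely $x_l$) that has target $m$, not $x_1$, but this indexing issue does not affect the argument.
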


\begin{proof}
Let us first show that (1) implies (2). Suppose $\AAA$ is finitely generated by $X \subseteq \AAA_+$.
The module $\AAA(-,m)_{\geq n}$ has non zero entries $\AAA(n,m), \AAA(n+1, m), \dots$.
Any word $w = x_{i_l}\dots x_{i_1}$ in one of these $k$-modules contains a letter $x \in \AAA(n', m')$ with $m \leq m' < n \leq n'$. Since $\cup_{d \in \N} X(m' +d, m')$ is finite for each of the finitely many $m'$, the total number of such letters $x$ is finite. Now we can write $w = w'xw''$ with $w'x \in \AAA(n',m)$ so $\AAA(-,m)_{\geq n}$ is generated by the words $w'x$. Again since $\AAA$ is finitely generated by $X$, there are only finitely many possibilities for $w'$.
Since (2) trivially implies (3), it remains to show that (3) implies (1). Take for every $m$ a finite generating set $X_m = \{ x_{m_1}, \dots, x_{m_{k_m}}\}$ with $|x_{m_i}| \geq 1$ and $\AAA(-,m)_{\geq m +1} = X_m \AAA$. We claim that $X = \cup_{m \in \Z} X_m$ generates $\AAA$. It is then clear from the definition of $X$ that the generation is finite. Now every $f \in \AAA(n, m)$ with $n > m$ can be written as
$f = \sum_{i = 1}^{k_m} x_{m_i}a_i$ for $a_i \in \AAA(n, l_i)$ with $l_i > m$ and hence $|a_i| < |f|$. The proof is finished by induction on $|f|$. 
\end{proof}

The following shows that \emph{finite} generation of a $\Z$-algebra is a reasonable term, in spite of the fact that it involves an infinite number of generators.

\begin{proposition}\label{remfin}
Let $A$ be a positively graded, connected $\Z$-graded algebra with associated $\Z$-algebra $\AAA$. Then $A$ is finitely generated as an algebra if and only if $\AAA$ is finitely generated as a $\Z$-algebra.
\end{proposition}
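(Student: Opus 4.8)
The plan is to prove both implications separately, and in each direction the key is to relate a finite generating set for $A$ as a graded $k$-algebra to a finite generating set for $\AAA$ as a $\Z$-algebra, exploiting the dictionary $\AAA(n,m) = A_{n-m}$ from \S\ref{parZalg}. Recall that $A$ positively graded and connected means $A_0 = k$ and $A_d = 0$ for $d < 0$, which on the $\Z$-algebra side translates exactly to $\AAA$ being positively graded and connected, so that Proposition \ref{propfin} is applicable.

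First I would treat the implication ``$A$ finitely generated $\Rightarrow$ $\AAA$ finitely generated''. Suppose $A$ is generated as a $k$-algebra by finitely many homogeneous elements $a_1, \dots, a_r$ of degrees $d_1, \dots, d_r \geq 1$ (we may assume they lie in positive degree since $A_0 = k$). For each index $j$ and each $n \in \Z$, the element $a_j$ gives rise to a morphism in $\AAA(n + d_j, n)$, namely the image of $a_j$ under the identification $A_{d_j} \cong \AAA(n+d_j, n)$. Fixing $m \in \Z$, set $X_m$ to consist of these images for all $j$ and for the finitely many values of $n$ in the range $m \leq n$ — but one must be slightly careful: a priori this is an infinite set. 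The right way is to observe, via Proposition \ref{propfin}, that it suffices to show the ideal $\AAA_+$ is finitely generated, i.e. each $\AAA(-,m)_{\geq m+1}$ is a finitely generated $\AAA$-module. But $\AAA(-,m)_{\geq m+1}$ has entries $\AAA(n,m) = A_{n-m}$ for $n > m$, and any element of $A_{n-m}$ is a $k$-linear combination of products $a_{j_l} \cdots a_{j_1}$; pulling off the last letter $a_{j_1} \in A_{d_{j_1}} \cong \AAA(m + d_{j_1}, m)$, we see $\AAA(-,m)_{\geq m+1}$ is generated as an $\AAA$-module by the finitely many elements corresponding to $a_1, \dots, a_r$ sitting in degrees $\AAA(m + d_j, m)$. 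Hence condition (3) of Proposition \ref{propfin} holds and $\AAA$ is finitely generated.

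Conversely, for ``$\AAA$ finitely generated $\Rightarrow$ $A$ finitely generated'', I would again invoke Proposition \ref{propfin}(3): $\AAA$ finitely generated forces $\AAA(-,0)_{\geq 1}$ to be a finitely generated $\AAA$-module, say generated by elements lying in $\AAA(d_1, 0), \dots, \AAA(d_r, 0)$, i.e. by homogeneous elements $a_1, \dots, a_r$ of $A$ of degrees $d_1, \dots, d_r \geq 1$. The claim is that these generate $A$ as a $k$-algebra. Take any homogeneous $f \in A_n$ with $n \geq 1$; under $A_n \cong \AAA(n,0)$ the finite generation of the module $\AAA(-,0)_{\geq 1}$ lets us write $f = \sum_i a_i b_i$ with $b_i \in \AAA(n, l_i)$ for some $l_i$ with $0 \le l_i < n$ (using positivity), i.e. $b_i \in A_{n - l_i}$ with $n - l_i < n$; applying the (isomorphism) identification $\AAA(n,l_i) \cong \AAA(n-l_i, 0) \cong A_{n-l_i}$, which is legitimate since $A$ is a graded algebra and the multiplication only depends on degrees, we descend and conclude by induction on $n$ that $f$ is a polynomial in the $a_i$. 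Together with $A_0 = k$ this shows $A$ is finitely generated.

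The main obstacle I anticipate is purely bookkeeping: making sure that the translation between ``products of letters in $X$ with identity morphisms $1_m$'' (the $\Z$-algebra notion of generation from \S\ref{parbas}) and ``noncommutative monomials in $a_1,\dots,a_r$'' (the graded-algebra notion) is done cleanly, and in particular that the finiteness condition ``$X_m = \bigcup_d X(m+d,m)$ finite for every $m$'' is matched up correctly with ``finitely many $a_j$''. The potential subtlety is that on the $\Z$-algebra side one genuinely has a separate copy of each $a_j$ for each $m$, so one copies the $r$ generators once into each $X_m$; this is exactly what the definition of finite generation of a $\Z$-algebra permits, and Proposition \ref{propfin} is precisely the tool that lets us avoid re-deriving this and instead reduce everything to finite generation of the single ideal $\AAA_+$. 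Once that reduction is in place, both directions are short inductions on degree.
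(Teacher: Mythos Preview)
Your argument is correct in outline and gives a clean alternative to the paper's proof. One small slip: in the forward direction, when you write a monomial $a_{j_l}\cdots a_{j_1}$ and pull off a letter to serve as the module generator in $\AAA(m+d_j,m)$, you must pull off the \emph{leftmost} letter $a_{j_l}$, not the rightmost $a_{j_1}$. The right $\AAA$-module action on $\AAA(-,m)$ is by postcomposition, and under the identification $\AAA(n,m)=A_{n-m}$ the action of $c\in\AAA(n,m+d)$ on a generator $x\in\AAA(m+d,m)$ yields the product $x\cdot c$ in $A$ with $x$ on the left; so generating $\AAA(-,m)_{\geq m+1}$ by the $a_j$ amounts to writing each $f\in A_{+}$ as $\sum_j a_j c_j$, which singles out the leftmost factor. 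With this correction both directions go through exactly as you describe.

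The paper proceeds differently: rather than invoking Proposition~\ref{propfin} to reduce to finite generation of $\AAA_{+}$, it works directly with the definition of finite generation of a $\Z$-algebra. For the forward direction it simply places a copy of the given algebra generators $y_1,\dots,y_n$ into each $X_m$ and checks that $X=\bigcup_m X_m$ generates $\AAA$. For the converse it takes the finite set $X_0$, forms its translates $Y_m$ at every level via the shift $\AAA(n,m)\cong\AAA(n-m,0)$, and shows by induction on the degree $|a|$ that $\bigcup_m Y_m$ generates $\AAA$; this is equivalent to the $y_i=x_i^0$ generating $A$. Your route packages the ``one copy at each level'' bookkeeping into Proposition~\ref{propfin} and is arguably more conceptual, while the paper's route is self-contained and makes the dictionary between graded-algebra generation and $\Z$-algebra generation explicit. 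Both are short inductions on degree and neither offers a real advantage over the other.
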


\begin{proof}
Suppose $Y = \{ y_1, \dots y_n\}$ with $y_i \in A_{d_i}$, $d_i \geq 1$ is a finite collection of generators for $A$. Put $X_m = \{ x_1^m, \dots, x_n^m\}$ with $x_i^m = y_i \in A_{d_i} = \AAA(m + d_i, m)$. Then $X = \cup_{m \in \Z}X_m$ finitely generates $\AAA$. Suppose conversely that $X = \cup_{m \in \Z} X_m$ finitely generates $\AAA$ and write $X_0 = \{ x_1^0, \dots, x_n^0\}$. We claim that $Y = \{ y_1, \dots y_n\}$ with $y_i = x^0_i \in \AAA(d_i, 0) = A_{d_i}$ generates $A$. To this end we introduce the sets $Y_m = \{ y_1^m, \dots, y_n^m\}$ with $y^m_i = y_i \in A_{d_i} = \AAA(m + d_i, m)$. To prove that $Y$ generates $A$, it is clearly sufficient that $\cup_{m \in \Z} Y_m$ generates $\AAA$. Now consider an element $a \in \AAA(n,m)$. Then the translated element $a' \in \AAA(n - m, 0)$ can be written as a sum of words $x'_{\alpha_1} \dots x'_{\alpha_l}$ with $x'_{\alpha_1} \in X_0 = Y_0$, whence $a$ can be written as a sum of words $x_{\alpha_1} \dots x_{\alpha_l}$ with $x_{\alpha_1} \in Y_m$. The proof is finished by induction on $|a|$.
\end{proof}

By definition, a $\Z$-algebra $\AAA$ is \emph{coherent} if the category $\Mod(\AAA)$ is locally coherent, equivalently if all the representable modules $\AAA(-,n)$ are coherent. In \cite{polishchuk}, this notion is called \emph{weak} coherence and a stronger notion, which we will call \emph{strong coherence}, is considered. Namely, $\AAA$ is strongly coherent if the objects $\AAA(-,n)$ \'and the objects $\AAA(-, n)_{< n+1}$ are coherent.

\begin{proposition}
Let $\AAA$ be a positively graded connected $\Z$-algebra. If the objects $\AAA(-, n)_{< n+1}$ are coherent, then $\AAA$ is finitely generated. In particular, this is the case if 
$\AAA$ is strongly coherent.
\end{proposition}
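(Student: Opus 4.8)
The plan is to reduce the statement to the criterion of Proposition \ref{propfin}. Since $\AAA$ is positively graded and connected, condition (3) of that proposition applies: it suffices to prove that the right ideal $\AAA_+$ is finitely generated, i.e. that for every $m \in \Z$ the module $\AAA(-,m)_{\geq m+1}$ is finitely generated. So the whole problem becomes a statement about a single short exact sequence for each $m$.

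First I would write down, for each $m \in \Z$, the defining short exact sequence of the truncation at $m+1$:
$$0 \lra \AAA(-,m)_{\geq m+1} \lra \AAA(-,m) \lra \AAA(-,m)_{<m+1} \lra 0.$$
Here the middle term $\AAA(-,m)$ is representable, hence a finitely generated (in fact finitely presented projective) $\AAA$-module, generated by the single element $1_m$. The right-hand term $\AAA(-,m)_{<m+1}$ is coherent by hypothesis, and a coherent module is in particular finitely presented: applying the defining property of coherence to the module itself — which is finitely generated — shows it is finitely presented. So the sequence exhibits $\AAA(-,m)_{\geq m+1}$ as the kernel of an epimorphism from a finitely generated module onto a finitely presented one.

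Next I would invoke the standard homological fact that in a short exact sequence $0 \to K \to P \to Q \to 0$ of modules with $P$ finitely generated and $Q$ finitely presented, the submodule $K$ is finitely generated. (One picks an epimorphism $\AAA^{(n)} \to P$; composing with $P \to Q$ gives an epimorphism from a finitely generated free module onto the finitely presented module $Q$, whose kernel $L$ is finitely generated by Schanuel's lemma; since $\AAA^{(n)} \to P$ is onto and $K \subseteq P$, the map $\AAA^{(n)} \to P$ sends $L$, the preimage of $K$, onto $K$, so $K$ is a quotient of $L$.) Applying this to the sequence above yields that $\AAA(-,m)_{\geq m+1}$ is finitely generated for every $m$, which is exactly condition (3) of Proposition \ref{propfin}; hence $\AAA$ is finitely generated as a $\Z$-algebra. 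Finally, for the last sentence, $\AAA$ strongly coherent means by definition that all the $\AAA(-,n)$ \emph{and} all the $\AAA(-,n)_{<n+1}$ are coherent, so the hypothesis of the proposition is automatically met.

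The only step that is not a pure translation through Proposition \ref{propfin} is the homological lemma of the third paragraph, but this is classical; I do not expect any genuine obstacle here, and the argument is essentially a bookkeeping exercise with truncations once that lemma is in hand.
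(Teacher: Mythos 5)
Your proof follows exactly the paper's route: it uses the same short exact sequence $0 \to \AAA(-,m)_{\geq m+1} \to \AAA(-,m) \to \AAA(-,m)_{<m+1} \to 0$, deduces that the kernel is finitely generated from the coherence of the quotient and the finite generation of the middle term, and then invokes criterion (3) of Proposition \ref{propfin}. You have simply spelled out the standard homological lemma that the paper leaves implicit; the argument is correct and essentially identical.
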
 

\begin{proof}
Consider the exact sequence
$$0 \lra \AAA(-, m)_{\geq m+1} \lra \AAA(-,m) \lra \AAA(-,m)_{< m+1} \lra 0.$$
Since $\AAA(-,m)_{< m+1}$ is coherent and $\AAA(-,m)$ is finitely generated, the kernel $\AAA(-, m)_{\geq m+1}$ is finitely generated. The result follows by Proposition \ref{propfin}.
\end{proof} 

\begin{remark}
There exist finitely generated graded algebras, and hence $\Z$-algebras, that are not coherent (see for example \cite{polishchuk}).
\end{remark}

\subsection{Quasi-coherent modules over a $\Z$-algebra}\label{parqcoh}
In this section we define the category of quasicoherent modules over an arbitrary $\Z$-algebra using the additive ``tails topology''. If the category of torsion modules is localizing, our definition  generalizes the classical one.

A module $M$ over $\AAA$ is called \emph{right bounded} if $M_n = 0$ for $n >> 0$, and \emph{torsion} if it is a directed colimit of right bounded modules. The category of torsion modules is denoted by $\Tors(\AAA)$.

\begin{lemma}\label{opsom}
The following are equivalent for $M \in \Mod(\AAA)$:
\begin{enumerate}
\item $M$ is torsion.
\item For every $x \in M_m$, there is an $n_0 \in \N$ such that for all $n \geq n_0$ and for all $a \in \AAA(n,m)$ we have $0 = xa \in M_n$.
\item $M$ is a union of finitely generated torsion submodules.
\item $M$ is a directed union of finitely generated torsion submodules.
\item Every finitely generated submodule of $M$ is torsion.
\end{enumerate}
Moreover, if $M$ is finitely generated and torsion, then $M$ is right bounded.
\end{lemma}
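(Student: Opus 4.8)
The plan is to prove the \emph{moreover} clause first and then run the cycle of implications $(1)\Rightarrow(2)\Rightarrow(5)\Rightarrow(3)\Rightarrow(4)\Rightarrow(1)$. The single tool used repeatedly is that colimits in $\Mod(\AAA)=\Add(\AAA^{\op},\Ab)$ are computed degreewise, so that for a directed system $(N_j)_j$ one has $(\colim_j N_j)_n=\colim_j (N_j)_n$ and every element of $\colim_j N_j$ is the image of an element of some $N_j$; together with the (degreewise, hence obvious) fact that right boundedness is inherited by submodules and quotients.

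For the \emph{moreover} clause: write $M=\{x_1,\dots,x_k\}\AAA$ with $x_i\in M_{m_i}$ and $M=\colim_{j}N_j$ a directed colimit of right bounded modules. Each $x_i$ lies in $M_{m_i}=\colim_j(N_j)_{m_i}$, hence is hit from some stage; by directedness a single index $j$ serves all $i$ simultaneously, so the canonical map $N_j\to M$ has image containing every generator, hence equal to $M$. Thus $M$ is a quotient of the right bounded module $N_j$, so $M$ is right bounded. The implication $(1)\Rightarrow(2)$ is the same idea locally: given $x\in M_m$, lift it to some right bounded $N_j$; since $(N_j)_n=0$ for $n\gg 0$, the relation $xa=0$ holds in $M_n$ for every $a\in\AAA(n,m)$ with $n\gg 0$.

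Next, $(2)\Rightarrow(5)$: property (2) is obviously inherited by submodules, so it is enough to show that a finitely generated module $N=\{x_1,\dots,x_k\}\AAA$ (with $x_i\in N_{m_i}$) satisfying (2) is right bounded — but $N_n=\sum_i x_i\AAA(n,m_i)$, which vanishes as soon as $n$ exceeds the finitely many bounds given by (2) for the generators — and a right bounded module is trivially torsion. Then $(5)\Rightarrow(3)$ because $M$ is the union of its cyclic submodules $\{x\}\AAA$, $x\in M$, each finitely generated and, by (5), torsion. For $(3)\Rightarrow(4)$, replace the given covering family by the family of all finite sums of its members: this family is directed by inclusion and still covers $M$, each member is finitely generated, and each member is torsion because $\Tors(\AAA)$ is closed under finite direct sums and quotients (a finite direct sum, resp.\ a quotient, of directed colimits of right bounded modules is again a directed colimit of right bounded modules). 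Finally $(4)\Rightarrow(1)$: by the \emph{moreover} clause each finitely generated torsion member of the directed union is right bounded, so $M$ is exhibited as a directed union — in particular a directed colimit — of right bounded modules.

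I do not expect a genuine obstacle here. The two points that deserve a line of care are the interchange of finite generation with the directed colimit in the \emph{moreover} clause — which is precisely why the degreewise description of colimits is invoked, so that the finitely many generators are all captured at one stage — and the closure of $\Tors(\AAA)$ under finite direct sums and quotients used in $(3)\Rightarrow(4)$; both reduce to routine bookkeeping with the degreewise boundedness condition.
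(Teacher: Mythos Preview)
Your proof is correct. The paper's own proof consists of the single word ``Easy'', so there is nothing substantive to compare against; your cycle $(1)\Rightarrow(2)\Rightarrow(5)\Rightarrow(3)\Rightarrow(4)\Rightarrow(1)$ together with the degreewise computation of directed colimits is exactly the kind of routine verification the authors had in mind.
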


\begin{proof}
Easy.
\end{proof}

Clearly, the category of right bounded modules is Serre (i.e. closed under subquotients and extensions), and $\Tors(\AAA)$ is closed under coproducts. We are most interested in situations where $\Tors(\AAA)$ is localizing (i.e. closed under subquotiens, extensions and coproducts).

\begin{lemma}\label{astlem}
If all the modules $\AAA(-,m)_{\geq n}$ are finitely generated, then $\Tors(\AAA)$ is a localizing subcategory.
\end{lemma}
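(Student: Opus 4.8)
The plan is to check directly that $\Tors(\AAA)$ is closed under subobjects, quotients, extensions and coproducts, working throughout with the internal characterization of torsion modules supplied by Lemma \ref{opsom}: a module $M$ is torsion if and only if for every $x \in M_m$ there is an $n_0 \in \N$ with $xa = 0$ for all $n \geq n_0$ and all $a \in \AAA(n,m)$; I will abbreviate this property by $(T)$. Closure under subobjects and quotients is immediate from $(T)$ and uses no hypothesis on $\AAA$: if $N \subseteq M$ with $M$ torsion, then any $x \in N_m$ lies in $M_m$ and the same $n_0$ works; dually, given $x \in (M/N)_m$, lift it to $\tilde{x} \in M_m$, take $n_0$ witnessing $(T)$ for $\tilde{x}$, and observe $xa = \overline{\tilde{x}a} = 0$ for $n \geq n_0$. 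Closure under coproducts has already been noted before the statement. Hence the entire content of the lemma is closure under extensions, and this is the only place the finite generation hypothesis enters.

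So let $0 \lra M' \overset{\iota}{\lra} M \overset{\pi}{\lra} M'' \lra 0$ be exact with $M'$ and $M''$ torsion, and fix $x \in M_m$; I must exhibit an $n_0$ as in $(T)$. Since $M''$ is torsion there is $n_1 \in \N$ such that $\pi(x)a = 0$, i.e. $xa \in \iota(M') \cong M'$, for all $n \geq n_1$ and all $a \in \AAA(n,m)$. Let $L \subseteq M$ be the submodule with $L_n = \{xa \mid a \in \AAA(n,m)\}$ for $n \geq n_1$ and $L_n = 0$ otherwise; multiplication by $x$ gives an $\AAA$-linear epimorphism $\AAA(-,m)_{\geq n_1} \twoheadrightarrow L$. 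By hypothesis $\AAA(-,m)_{\geq n_1}$ is finitely generated, hence so is $L$. Moreover $L \subseteq \iota(M')$ and $\iota(M') \cong M'$ is torsion, so $L$ is torsion by Lemma \ref{opsom}(5). A finitely generated torsion module is right bounded by the last sentence of Lemma \ref{opsom}, so there is $N \geq n_1$ with $L_n = 0$ for $n > N$; then $n_0 = N+1$ satisfies $xa = 0$ for all $n \geq n_0$ and all $a \in \AAA(n,m)$. Thus $M$ has property $(T)$ and is torsion.

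The main obstacle is precisely the extension step: one has to recognize that the ``tail submodule'' $L$ of $M'$ generated by $x$ is finitely generated, which is exactly what the assumption that every $\AAA(-,m)_{\geq n}$ is finitely generated provides, so that the implication ``finitely generated $+$ torsion $\Rightarrow$ right bounded'' from Lemma \ref{opsom} can be applied to turn the degreewise (unbounded) vanishing coming from torsion of $M''$ into the uniform bound required by $(T)$. Without the hypothesis $L$ is still torsion but need not be right bounded, and the argument collapses.
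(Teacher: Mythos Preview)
Your proof is correct and follows essentially the same approach as the paper: closure under subquotients and coproducts is noted to hold in general, and for extensions you form the submodule $L = x\AAA(-,m)_{\geq n_1}$ of $M'$, observe it is finitely generated by the hypothesis, hence right bounded, which gives the uniform bound for $x$. The paper's argument is identical in structure, just written more tersely.
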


\begin{proof}
It is not hard to see that for any $\AAA$, $\Tors(\AAA)$ is closed under subquotients, and it is obviously closed under coproducts. Let us look at an extension
$$\xymatrix{0 \ar[r] & K \ar[r]_f & M \ar[r]_g & Q \ar[r] & 0}$$
in which $K$ and $Q$ are torsion. Take $x \in M_m$. For some $n_0$, we have $g(x)\AAA(-,m)_{\geq n_0} = 0$. Consequently, $K' = x\AAA(-,m)_{\geq n_0}$ is a submodule of $K$. Since $\AAA(-,m)_{\geq n_0}$ is finitely generated, so is $K'$, and consequently $K'$ is right bounded. But then $x\AAA$ is right bounded too, which finishes the proof.
\end{proof}

By Lemma \ref{astlem} and Proposition \ref{propfin}, $\Tors(\AAA)$ is localizing in each of the following situations:
\begin{itemize}
\item $\AAA$ is positively graded, connected and finitely generated.
\item $\AAA$ is noetherian.
\end{itemize}

If $\Tors(\AAA)$ is localizing, we define the category of quasicoherent modules over $\AAA$ to be the quotient category $\Qmod(\AAA) = \Mod(\AAA)/\Tors(\AAA)$. According to \S \ref{parGP}, this category can equivalently be described as a subcategory $\Sh(\AAA, \ttt) \subseteq \Mod(\AAA)$ of linear sheaves. In the corresponding linear topology $\ttt$ on $\AAA$, a submodule $R \subseteq \AAA(-,m)$ is covering if and only if the quotient $\AAA(-,m)/R$ is torsion. 
More precisely, the exact quotient functor
$$\pi: \Mod(\AAA) \lra \Qmod(\AAA)$$
has a fully faithful right adjoint
$$\omega: \Qmod(\AAA) \lra \Mod(\AAA)$$ whose essential image is precisely $\Sh(\AAA, \ttt)$. 
Recall that $\bbb \subseteq \ttt$ is a \emph{basis} for the topology if for every $R \in \ttt$ there exists a $B \in \bbb$ with $B \subseteq R$. If $\bbb$ is a basis for $\ttt$, it is sufficient to check the sheaf property with respect to $\bbb$, and perform sheafification using $\bbb$.

\begin{lemma}
A basis for $\ttt$ is given by the subobjects $\AAA(-,m)_{\geq n} \subseteq \AAA(-,m)$ for $m \leq n \in \Z$.
\end{lemma}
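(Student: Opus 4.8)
The plan is to show that the proposed collection $\bbb = \{\,\AAA(-,m)_{\geq n} \subseteq \AAA(-,m) \mid m \leq n \in \Z\,\}$ is indeed a basis for the topology $\ttt$ described above, namely the topology for which $R \subseteq \AAA(-,m)$ is covering if and only if the quotient $\AAA(-,m)/R$ is torsion. There are two things to check: first, that every element of $\bbb$ is itself a $\ttt$-covering, and second, that every $\ttt$-covering contains an element of $\bbb$.

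For the first point, I would simply observe that the quotient $\AAA(-,m)/\AAA(-,m)_{\geq n}$ is the module whose values are $\AAA(k,m)$ for $k < n$ and $0$ for $k \geq n$. This module is right bounded (it vanishes for $k \geq n$), hence torsion, so $\AAA(-,m)_{\geq n} \in \ttt(m)$. This is immediate from the definitions and uses only Lemma \ref{opsom} (or even just the definition of right bounded).

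For the second point, let $R \subseteq \AAA(-,m)$ be a covering, so $Q = \AAA(-,m)/R$ is torsion. I want to produce some $n$ with $\AAA(-,m)_{\geq n} \subseteq R$, equivalently with $\AAA(k,m) \subseteq R(k,m)$ for all $k \geq n$. The key observation is that $Q$ is a \emph{cyclic} module: it is generated by the image $\bar{1}_m$ of $1_m \in \AAA(m,m)$, since $\AAA(-,m)$ itself is generated by $1_m$. Now apply the torsion criterion of Lemma \ref{opsom}(2) to the single element $\bar{1}_m \in Q_m$: there is an $n_0 \in \N$ such that for all $k \geq n_0$ and all $a \in \AAA(k,m)$ we have $\bar{1}_m a = 0 \in Q_k$. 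But $\bar{1}_m a = \overline{1_m a} = \bar{a}$, so this says precisely that $\bar{a} = 0$ in $Q_k = \AAA(k,m)/R(k,m)$, i.e. $a \in R(k,m)$, for every $k \geq n_0$ and every $a \in \AAA(k,m)$. Taking $n = \max(n_0, m)$ gives $\AAA(-,m)_{\geq n} \subseteq R$, as required.

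The argument is essentially routine; the only genuinely load-bearing step is recognizing that the quotient $\AAA(-,m)/R$ is generated by the image of the identity $1_m$, so that the pointwise torsion condition of Lemma \ref{opsom}(2) applied at that single generator controls the whole module. I expect the main (minor) subtlety to be bookkeeping with the grading conventions for a positively graded $\Z$-algebra — making sure the bound $n$ is taken $\geq m$ so that $\AAA(-,m)_{\geq n}$ is actually a proper truncation and the statement $\AAA(-,m)_{\geq n} \subseteq R$ is verified in all relevant degrees $k \geq n$ (for degrees $k < m$ both modules vanish by positive gradedness, and for $m \leq k < n$ there is nothing to check once $n \geq n_0$).
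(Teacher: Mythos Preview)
Your proof is correct and follows essentially the same approach as the paper. The paper phrases the second step slightly more tersely---it observes that $\AAA(-,m)/R$ is finitely generated (indeed cyclic) and then invokes the ``Moreover'' clause of Lemma~\ref{opsom} to conclude it is right bounded---whereas you have effectively inlined that argument by applying criterion~(2) of Lemma~\ref{opsom} directly to the generator $\bar{1}_m$; the content is the same.
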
 

\begin{proof}
Obviously, $\AAA(-,m)/\AAA(-,m)_{\geq n} = \AAA(-,m)_{<n}$ is right bounded. Now consider an arbitrary subobject $R \subseteq \AAA(-,m)$ for which $\AAA(-,m)/R$ is torsion. Then since $\AAA(-,m)/R$ is finitely generated, it is right bounded. Consequently, $\AAA(-,m)_{\geq n} \subseteq R$ for some $n$.
\end{proof}

For an arbitrary $\Z$-algebra $\AAA$, we define  the covering system $\LLL_{\mathrm{tails}}$ for which $R \in \LLL_{\mathrm{tails}}(m)$ if and only if $\AAA(-,m)_{\geq n} \subseteq R$ for some $m \leq n \in \Z$. Then $\Tors(\AAA)$ is localizing if and only if $\LLL_{\mathrm{tails}}$ defines a topology, and in this situation we have $\Sh(\AAA, \LLL_{\mathrm{tails}}) \cong \Qmod(\AAA)$. We will use this fact to define a category $\Qmod(\AAA)$ in complete generality.
 
\begin{proposition}
Let $\AAA$ be an arbitrary $\Z$-algebra. The covering system $\LLL_{\mathrm{tails}}$ satisfies the identity axiom and the pullback axiom. 
\end{proposition}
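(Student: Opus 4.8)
The plan is to verify the two axioms for a covering system directly from the definition of $\LLL_{\mathrm{tails}}$, using only the structure of the truncated representables $\AAA(-,m)_{\geq n}$ and the elementary observation that these are ``cofinal'' in each $\LLL_{\mathrm{tails}}(m)$ by construction. Recall that the \emph{identity axiom} asserts that the maximal subfunctor $\AAA(-,m) \in \LLL_{\mathrm{tails}}(m)$ for every $m$, and the \emph{pullback axiom} asserts that if $R \in \LLL_{\mathrm{tails}}(m)$ and $g \colon m' \lra m$ is any morphism in $\AAA$ (i.e.\ $g \in \AAA(m',m)$), then the pullback $g^{-1}R \subseteq \AAA(-,m')$ lies in $\LLL_{\mathrm{tails}}(m')$, where $g^{-1}R$ is the subfunctor whose value at $l$ is $\{\, h \in \AAA(l,m') \mid gh \in R(l) \,\}$.

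For the identity axiom, since $\AAA$ is positively graded we may take $n = m$ and observe $\AAA(-,m)_{\geq m} = \AAA(-,m)$ (all nonzero entries of $\AAA(-,m)$ already lie in degrees $\geq m$), so $\AAA(-,m) \supseteq \AAA(-,m)_{\geq m}$ trivially places it in $\LLL_{\mathrm{tails}}(m)$. Actually even without positive grading one has $\AAA(-,m) \supseteq \AAA(-,m)_{\geq n}$ for every $n$, so the identity axiom is immediate in full generality. For the pullback axiom, let $R \in \LLL_{\mathrm{tails}}(m)$, so $\AAA(-,m)_{\geq n} \subseteq R$ for some $n \geq m$, and let $g \in \AAA(m',m)$. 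I claim $\AAA(-,m')_{\geq n} \subseteq g^{-1}R$: indeed, if $h \in \AAA(l,m')$ with $l \geq n$, then $gh \in \AAA(l,m)$ is an entry of $\AAA(-,m)$ in degree $l \geq n$, hence $gh \in \AAA(-,m)_{\geq n}(l) \subseteq R(l)$, so $h \in (g^{-1}R)(l)$. Therefore $g^{-1}R \in \LLL_{\mathrm{tails}}(m')$, as required. (Note that $g^{-1}R$ is automatically a subfunctor, since $R$ is: this is a formal consequence of functoriality of $\AAA(-,m)$ and $\AAA(-,m')$, and of $R$ being closed under the action.)

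There is essentially no obstacle here; the point of isolating this proposition is that the \emph{third} axiom — the local character / glueing axiom, which together with these two would make $\LLL_{\mathrm{tails}}$ a topology — is precisely the one that can fail for a general $\Z$-algebra (it holds exactly when $\Tors(\AAA)$ is localizing, equivalently when the $\AAA(-,m)_{\geq n}$ are well-behaved enough, as in Lemma~\ref{astlem}). So the only care needed is to make sure the basis elements $\AAA(-,m)_{\geq n}$ are genuinely stable under the operations in play: for the identity axiom this is the containment $\AAA(-,m)_{\geq n} \subseteq \AAA(-,m)$, and for the pullback axiom it is the containment just displayed, $\AAA(-,m')_{\geq n} \subseteq g^{-1}(\AAA(-,m)_{\geq n})$, which is forced by the grading being respected by composition in $\AAA$. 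Both are one-line verifications, and the proposition follows.
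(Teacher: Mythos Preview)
Your proof is correct and follows essentially the same approach as the paper: for the identity axiom you use $\AAA(-,m)_{\geq m}\subseteq\AAA(-,m)$, and for the pullback axiom you verify $\AAA(-,m')_{\geq n}\subseteq g^{-1}R$ element-wise, which is exactly the containment $\AAA(-,n')_{\geq m}\subseteq P'$ the paper asserts via its pullback diagram. The only cosmetic difference is that the paper phrases the pullback step diagrammatically while you spell it out on elements; the underlying observation (composition preserves the degree of the source, so truncations pull back to truncations with the same bound) is identical.
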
 

\begin{proof}
Obviously, $\AAA(-,n) = \AAA(-,n)_{\geq n}$ is in $\LLL_{\mathrm{tails}}$. Consider a pullback diagram
$$\xymatrix{ {P'} \ar[r] \ar[d] & {\AAA(-,n)_{\geq m}} \ar[d] \\ P \ar[r] \ar[d] & R \ar[d] \\ {\AAA(-, n')} \ar[r]_{a-} & {\AAA(-,n)}}.$$
Clearly $\AAA(-, n')_{\geq m} \subseteq P'$ which finishes the proof.
\end{proof}

\begin{definition}
Let $\AAA$ be an arbitrary $\Z$-algebra. The \emph{tails topology} $\ttt_{\mathrm{tails}}$ is the smallest topology containing $\LLL_{\mathrm{tails}}$. The \emph{category of quasi-coherent modules over $\AAA$} is by definition $\Qmod(\AAA) = \Sh(\AAA, \ttt_{\mathrm{tails}})$. 
\end{definition}

\begin{remark}
The tails topology $\ttt_{\mathrm{tails}}$ is the intersection of all the topologies containing $\LLL_{\mathrm{tails}}$. It can be obtained from $\LLL_{\mathrm{tails}}$ by ``adding glueings'' of covers in a transfinite induction proces. In order to define sheaves, one only needs the covers in $\LLL_{\mathrm{tails}}$, in other words $\Sh(\AAA, \ttt_{\mathrm{tails}}) = \Sh(\AAA, \LLL_{\mathrm{tails}})$.
\end{remark}

\begin{example}
Consider $A = k[x_1, \dots, x_n, \dots]$, the polynomial ring in countably many variables and let $\AAA$ be the associated $\Z$-algebra with $\AAA(n, m) = A_{n - m}$.
Let $S \subseteq \AAA(-, 0)$ be generated by $\cup_{i = 1}^{\infty} x_i\AAA(-,1)_{\geq i}$. Hence, $S$ contains all monomials of degree $i$ containing $x_i$, but it does not contain $x_{i + 1}^i$. Consequently, $S$ does not contain $\AAA(-, 0)_{\geq i}$ for any $i$, so it is not in $\LLL_{\mathrm{tails}}$. However, if we consider the covering $\AAA(-, 0)_{\geq 1}$, then for all the generators $x_i$, the pullback $x_i^{-1}S$ does contain the covering $\AAA(-,1)_{\geq i}$. It easily follows that arbitrary pullbacks are coverings, so $S$ is ``glued together'' from coverings but fails to be a covering itself. Hence, $\LLL_{\mathrm{tails}}$ fails to be a topology.
\end{example}

\subsection{The characterization}\label{parchar}

Let $\ccc$ be Grothendieck category and let $(\ooo(n))_{n \in \Z}$ be a collection of objects in $\ccc$. We define a $\Z$-algebra $\AAA$ with $\Ob(\AAA) = \Z$ and 
$$\AAA(n,m) = \begin{cases} \ccc(\ooo(-n), \ooo(-m)) & \text{if}\,\, n \geq m \\ 0 & \text{otherwise} \end{cases}$$
so that we obtain a natural functor
$$u: \AAA \lra \ccc: n \longmapsto \ooo(-n).$$

\begin{lemma}\label{lemfffauto}
The functor $u: \AAA \lra \ccc$ is $\ttt_{\mathrm{tails}}$-full and $\ttt_{\mathrm{tails}}$-faithful. 
\end{lemma}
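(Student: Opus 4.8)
The plan is to unwind the definitions of $\ttt_{\mathrm{tails}}$-fullness and $\ttt_{\mathrm{tails}}$-faithfulness and observe that, for this particular $u$, the canonical comparison morphism $\AAA(-,m) \lra \ccc(u(-), u(m)) = \ccc(u(-), \ooo(-m))$ is in fact an \emph{isomorphism} on a cofinal part of the structure. Recall that $u(n) = \ooo(-n)$ and $\AAA(n,m) = \ccc(\ooo(-n), \ooo(-m))$ whenever $n \geq m$, so the canonical map $\AAA(n,m) \lra \ccc(u(n), u(m))$ is literally the identity for $n \geq m$; it is only for $n < m$, where $\AAA(n,m) = 0$ while $\ccc(\ooo(-n), \ooo(-m))$ may be nonzero, that it fails to be an isomorphism. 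So the comparison morphism $\varphi_m \colon \AAA(-,m) \lra \ccc(u(-), \ooo(-m))$ is an isomorphism when restricted to the submodule $\AAA(-,m)_{\geq m}$ — but $\AAA(-,m)_{\geq m} = \AAA(-,m)$, so actually we want the slightly more careful statement: $\varphi_m$ agrees with an isomorphism on each component indexed by $n \geq m$ and kills nothing there.

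First I would make this precise: fix $m$ and consider the cover $\AAA(-,m)_{\geq m} \in \LLL_{\mathrm{tails}}(m)$, which by the basis lemma lies in $\ttt_{\mathrm{tails}}(m)$. Hmm — that cover is all of $\AAA(-,m)$, so it gives nothing. The right move instead is: $\ttt_{\mathrm{tails}}$-fullness asks that for every $y \in \ccc(u(A_g), \ooo(-m))$ (for varying $A_g$) there be a cover refining the locus on which $y$ is hit by $\varphi_m$. Since for any object index $p \geq m$ the map $\varphi_m$ is surjective (indeed bijective) on the component at $p$, the only components on which $y$ might fail to be in the image are those at $p < m$. So take $y \in \ccc(u(p), \ooo(-m)) = \ccc(\ooo(-p), \ooo(-m))$ with $p < m$; we must produce $R \in \ttt_{\mathrm{tails}}(p)$ such that for all $g \colon A_g \lra p$ in $R$, the restriction $\ccc(u(g),-)(y)$ lies in the image of $(\varphi_m)_{A_g}$. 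Choose $R = \AAA(-,p)_{\geq m} \in \LLL_{\mathrm{tails}}(p) \subseteq \ttt_{\mathrm{tails}}(p)$: its nonzero components sit at indices $q \geq m$, and for any $g \colon q \lra p$ in $R$ (so $g \in \AAA(q,p) = \ccc(\ooo(-q),\ooo(-p))$ with $q \geq m$) the precomposition $y \circ g \in \ccc(\ooo(-q), \ooo(-m))$ lies in the component of $\AAA(-,m)$ at $q$ since $q \geq m$, hence is in the image of $\varphi_m$. This gives $\ttt_{\mathrm{tails}}$-fullness. For $\ttt_{\mathrm{tails}}$-faithfulness, suppose $x \in \AAA(p,m)$ maps to $0$ in $\ccc(u(p), \ooo(-m))$; if $p \geq m$ then $x$ already \emph{is} the element $0$ of $\ccc(\ooo(-p),\ooo(-m))$, so $x = 0$ and any cover works, while if $p < m$ then $\AAA(p,m) = 0$ so again $x = 0$ vacuously.

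The one genuine subtlety — and the step I expect to be the main obstacle — is bookkeeping the direction of composition and the index shifts ($u(n) = \ooo(-n)$, degrees running the ``wrong'' way), together with checking that $\AAA(-,p)_{\geq m}$ really is a sub\emph{module} of $\AAA(-,p)$ and that precomposition by morphisms in it lands in the claimed components; this is where a sign/variance slip would creep in. Once the indices are pinned down, the argument is essentially the observation that $\varphi_m$ is an isomorphism after restriction to any tail $\AAA(-,m)_{\geq n}$ with $n \geq m$, so the tails covers witness both the $\ttt_{\mathrm{tails}}$-epi and $\ttt_{\mathrm{tails}}$-mono conditions trivially. I would therefore organize the write-up as: (1) identify $\varphi_m$ explicitly and note it is the identity in degrees $\geq 0$; (2) for fullness, given $y$ over index $p$, use the cover $\AAA(-,p)_{\geq m}$ and check the image condition componentwise; (3) for faithfulness, observe the kernel of each $(\varphi_m)_p$ is already zero, so even the trivial cover suffices. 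No appeal to any nontrivial earlier result is needed beyond the description of $\LLL_{\mathrm{tails}}$ and the fact (Remark after the definition of $\ttt_{\mathrm{tails}}$) that $\LLL_{\mathrm{tails}} \subseteq \ttt_{\mathrm{tails}}$.
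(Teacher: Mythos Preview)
Your argument is correct and is essentially identical to the paper's own proof: for fullness you use the cover $\AAA(-,p)_{\geq m}$ when $p<m$ exactly as the paper does, and for faithfulness your case analysis amounts to the paper's one-line observation that $u$ is faithful by construction (each $\varphi_{p,m}$ is either an isomorphism or has zero domain), hence $\ttt_{\mathrm{tails}}$-faithful. The only slip is in your summary sentence ``it is the identity in degrees $\geq 0$'': that should read ``in degrees $\geq m$'', but your detailed argument above it has the indices right.
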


\begin{proof}
The functor $u$ is faithful by construction, whence certainly $\ttt_{\mathrm{tails}}$-faithful. Consider the canonical maps
$$\varphi_{n,m}: \AAA(n,m) \lra \ccc(\ooo(-n), \ooo(-m)).$$
For $n \geq m$, $\varphi_{n,m}$ is an isomorphism by construction and nothing needs to be checked. So take $n < m$ and consider a map $c: \ooo(-n) \lra \ooo(-m)$ in $\ccc$. Consider the $\ttt_{\mathrm{tails}}$-covering $\AAA(-,n)_{\geq m}$. For every $x \in \AAA(k,n)_{\geq m}$, with consequently $k \geq m$, we look at the composition
$$cu(x): \ooo(-k) \lra \ooo(-m).$$
Since $k \geq m$, we have $cu(x)$ in the image of $\varphi_{k,m}$, as desired.
\end{proof}

\begin{definition}\label{defzgen}
If for a collection $(\ooo(n))_{n \in \Z}$ of objects in $\ccc$ the associated functor $u: \AAA \lra \ccc$  induces an equivalence $\ccc \cong \Qmod(\AAA)$, we call $(\ooo(n))_{n \in \Z}$ a \emph{$\Z$-generating} sequence and we call $u$ a \emph{$\Z$-generating} functor.
\end{definition}

\begin{theorem}\label{thechar}
Let $\ccc$ be Grothendieck category, $(\ooo(n))_{n \in \Z}$ a collection of objects in $\ccc$, and $u: \AAA \lra \ccc$ as defined above. Suppose $\LLL_{\mathrm{tails}} = \ttt_{\mathrm{tails}}$ on $\AAA$. The following are equivalent:
\begin{enumerate}
\item $(\ooo(n))_{n \in \Z}$ is a $\Z$-generating sequence in $\ccc$.
\item the following conditions are fulfilled:
\begin{enumerate}
\item the objects $\ooo(n)$ generate $\ccc$, i.e. for every $C \in \ccc$ there is an epimorphism
$$\oplus_{i}\ooo(n_i) \lra C.$$ 
\item $u$ is $\LLL_{\mathrm{tails}}$-ample, i.e. for every $m \leq n$, there is an epimorphism
$$\oplus_{i}\ooo(-n_i) \lra \ooo(-m)$$
with $n_i \geq n$ for every $i$.
\item $u$ is $\LLL_{\mathrm{tails}}$-projective, i.e. for every $\ccc$-epimorphism $c: X \lra Y$ and morphism $f: \ooo(-m) \lra Y$, there is an $n_0 \geq m$ such that every composition $\ooo(-n) \lra \ooo(-m) \lra Y$ with $n \geq n_0$ factors through $c$.
\item $u$ is $\LLL_{\mathrm{tails}}$-finitely presented, i.e. for every filtered colimit $\colim_i X_i$ in $\ccc$ and morphism $f: \ooo(-m) \lra \colim_i X_i$, there is an $n_0 \geq m$ such that for every $n \geq n_0$ every composition $\ooo(-n) \lra \ooo(-m) \lra \colim_i X_i$ factors through $\ooo(-n) \lra \ooo(-m) \lra X_i$ for some $i$. Moreover if a morphism $f: \ooo(-m) \lra X_i$ becomes zero when extended to $\colim_i X_i$, then there is an $n_0 \geq m$ such that for every $n \geq n_0$ every composition $\ooo(-n) \lra \ooo(-m) \lra X_i$ becomes zero when composed with a suitable $X_i \lra X_j$.
\end{enumerate}
\end{enumerate}
\end{theorem}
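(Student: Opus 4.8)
The plan is to deduce the statement directly from Theorem \ref{cor1}, applied to the topology $\ttt = \ttt_{\mathrm{tails}}$ on $\AAA$. This is legitimate because $\ttt_{\mathrm{tails}}$ is by construction a topology, and under the standing hypothesis $\LLL_{\mathrm{tails}} = \ttt_{\mathrm{tails}}$ it coincides with $\LLL_{\mathrm{tails}}$ itself. Since $\Qmod(\AAA) = \Sh(\AAA, \ttt_{\mathrm{tails}})$ by definition, condition (1) of Theorem \ref{cor1} for this choice of $\ttt$ says precisely that $u$ induces a localization with $i\colon \ccc \lra \Mod(\AAA)$ factoring through an equivalence $\ccc \cong \Qmod(\AAA)$, i.e.\ that $(\ooo(n))_{n \in \Z}$ is a $\Z$-generating sequence. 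Moreover, by Lemma \ref{lemfffauto} the functor $u$ is automatically $\ttt_{\mathrm{tails}}$-full and $\ttt_{\mathrm{tails}}$-faithful, so condition (2) of Theorem \ref{cor1} collapses to: $u$ is generating, $\ttt_{\mathrm{tails}}$-projective, $\ttt_{\mathrm{tails}}$-finitely presented and $\ttt_{\mathrm{tails}}$-ample. It then remains to verify that these four abstract conditions are exactly the concrete conditions (a)--(d) listed in the theorem; that is purely a matter of unwinding definitions along the basis of $\LLL_{\mathrm{tails}}$.

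First I would record the elementary remark that each of ``$\ttt$-ample'', ``$\ttt$-epimorphism'' and ``$\ttt$-monomorphism'' can be tested using only the covers in a basis: in the ampleness condition the map $\oplus_{f \in B}u(A_f) \lra u(A)$ factors through $\oplus_{f \in R}u(A_f) \lra u(A)$ whenever $B \subseteq R$, so epimorphy for $B$ forces it for $R$; and in Definition \ref{def1} a witnessing covering $R$ may always be shrunk to a smaller one, in particular to one in the basis. As observed in \S\ref{parqcoh}, a basis of $\LLL_{\mathrm{tails}}$ is given by the truncations $\AAA(-,m)_{\geq n} \subseteq \AAA(-,m)$ for $m \leq n$, and $\AAA(-,m)_{\geq n}$ consists exactly of the morphisms $g \in \AAA(k,m)$ of $\AAA$ with $k \geq n$. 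Substituting $u(k) = \ooo(-k)$, the $\LLL_{\mathrm{tails}}$-ampleness of $u$ unwinds to condition (b): for $m \leq n$ the canonical map $\oplus_{k \geq n}\oplus_{g \in \AAA(k,m)}\ooo(-k) \lra \ooo(-m)$ is a $\ccc$-epimorphism, equivalently there is an epimorphism $\oplus_i \ooo(-n_i) \lra \ooo(-m)$ with all $n_i \geq n$.

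Next I would unwind $\LLL_{\mathrm{tails}}$-projectivity. Given a $\ccc$-epimorphism $c\colon X \lra Y$ and $y \in \ccc(\ooo(-m), Y)$, applying Definition \ref{def1}(1) to $i(c)$ together with the basis covering $\AAA(-,m)_{\geq n_0}$ produces exactly condition (c): there is $n_0 \geq m$ such that every composite $\ooo(-n) \xrightarrow{u(g)} \ooo(-m) \xrightarrow{y} Y$ with $n \geq n_0$ and $g \in \AAA(n,m)$ lifts along $c$. Similarly, $\LLL_{\mathrm{tails}}$-finite presentation is by definition the conjunction of $\phi$ being a $\ttt_{\mathrm{tails}}$-epimorphism and a $\ttt_{\mathrm{tails}}$-monomorphism, for the comparison map $\phi\colon \colim_i \ccc(u(-), X_i) \lra \ccc(u(-), \colim_i X_i)$. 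Along the basis, the epimorphism part becomes the first assertion of condition (d) (some $y\circ u(g)$ factoring through a stage $X_i$), while the monomorphism part, bearing in mind that an element of $\colim_i \ccc(\ooo(-m), X_i)$ is represented by an actual morphism $f\colon \ooo(-m) \lra X_i$ and that vanishing in a filtered colimit means vanishing after a suitable transition map $X_i \lra X_j$, becomes the second assertion of condition (d). Finally ``$u$ generating'' is literally condition (a), since $\{u(n)\}_{n \in \Z} = \{\ooo(n)\}_{n \in \Z}$. Combining these identifications with Theorem \ref{cor1} gives the equivalence.

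I expect the only genuinely delicate point to be the bookkeeping in this last step: matching the $\ttt_{\mathrm{tails}}$-monomorphism clause of $\LLL_{\mathrm{tails}}$-finite presentation with the ``becomes zero after a transition map'' formulation of condition (d), and checking that the reduction to basis covers applies simultaneously to all three tested properties. Everything else is a direct substitution of $u(k) = \ooo(-k)$ and of the explicit description of the basis covers $\AAA(-,m)_{\geq n}$ into the relevant definitions.
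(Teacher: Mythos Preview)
Your proposal is correct and follows exactly the paper's approach: the paper's proof is the single sentence ``This follows from Theorem \ref{cor1} and Lemma \ref{lemfffauto}.'' You have simply made explicit the unwinding of the abstract $\ttt_{\mathrm{tails}}$-conditions into the concrete conditions (a)--(d) along the basis covers $\AAA(-,m)_{\geq n}$, which the paper leaves to the reader.
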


\begin{proof}
This follows from Theorem \ref{cor1} and Lemma \ref{lemfffauto}.
\end{proof}

When we restrict the situation a bit, we recover the classical geometric notion of ampleness (condition (ab)):

\begin{corollary}\label{coramp}
Let $\ccc$ be a locally finitely presented Grothendieck category, $(\ooo(n))_{n \in \Z}$ a collection of finitely presented objects in $\ccc$, and $u: \AAA \lra \ccc$ as defined above. Suppose $\LLL_{\mathrm{tails}} = \ttt_{\mathrm{tails}}$ on $\AAA$. The following are equivalent:
\begin{enumerate}
\item $(\ooo(n))_{n \in \Z}$ is a $\Z$-generating sequence in $\ccc$.
\item the following conditions are fulfilled:
\begin{enumerate}
\item[(ab)]  $(\ooo(n))_{n \in \Z}$ is ample, i.e. for every finitely presented object $C \in \ccc$, there is an $n_0$ such that for every $n \geq n_0$, there is an epimorphism
$$\oplus_i \ooo(-n_i) \lra C$$
with $n_i \geq n$ for every $i$.
\item[(c)] $u$ is $\LLL_{\mathrm{tails}}$-projective, i.e. for every $\ccc$-epimorphism $c: X \lra Y$ and morphism $f: \ooo(-m) \lra Y$, there is an $n_0 \geq m$ such that every composition $\ooo(-n) \lra \ooo(-m) \lra Y$ with $n \geq n_0$ factors through $c$.
\end{enumerate}
\end{enumerate}
\end{corollary}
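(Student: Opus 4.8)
The strategy is to deduce Corollary \ref{coramp} from Theorem \ref{thechar} by showing that, under the extra hypotheses (local finite presentability of $\ccc$ and finite presentability of the $\ooo(n)$), the six conditions of Theorem \ref{thechar}(2) collapse to just (ab) and (c). First I would observe that Lemma \ref{lemfffauto} already gives $\ttt_{\mathrm{tails}}$-fullness and $\ttt_{\mathrm{tails}}$-faithfulness of $u$ unconditionally, so these never need to be listed. Next, condition (d) of Theorem \ref{thechar} ($\LLL_{\mathrm{tails}}$-finite presentation) becomes automatic: since each $\ooo(-m)$ is a finitely presented object of $\ccc$, the canonical map $\colim_i \ccc(\ooo(-m), X_i) \to \ccc(\ooo(-m), \colim_i X_i)$ is already an isomorphism for every filtered system, so it is in particular a $\ttt_{\mathrm{tails}}$-epimorphism and $\ttt_{\mathrm{tails}}$-monomorphism — one can even take $n_0 = m$ in the explicit formulation. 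So (d) is free.

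The remaining point is the equivalence of the two ampleness-type conditions: Theorem \ref{thechar}(2)(a)+(b) together are equivalent to (ab). For the implication (ab)$\Rightarrow$(a)+(b): condition (b) of Theorem \ref{thechar} is the instance of (ab) applied to the finitely presented object $C = \ooo(-m)$, which is finitely presented by hypothesis; and condition (a) (the $\ooo(n)$ generate $\ccc$) follows because in a locally finitely presented Grothendieck category every object is a filtered colimit of finitely presented objects, and (ab) produces epimorphisms onto each finitely presented piece from sums of the $\ooo(-n_i)$, so these objects form a generating set. For the converse (a)+(b)+(c)$\Rightarrow$(ab): given a finitely presented $C$, condition (a) gives an epimorphism $p\colon \oplus_{i} \ooo(-m_i) \twoheadrightarrow C$, and by finite presentability of $C$ it factors through a finite subsum $\oplus_{i=1}^{r} \ooo(-m_i) \twoheadrightarrow C$. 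For each index $i$, apply condition (b) repeatedly (or directly, since $\LLL_{\mathrm{tails}}$-ampleness lets us replace $\ooo(-m_i)$ by a sum of $\ooo(-n_{ij})$ with $n_{ij}$ as large as we like) to dominate $\ooo(-m_i)$ by a finite sum of $\ooo(-n_{ij})$ with all $n_{ij}\geq n$, for any prescribed $n$; composing with $p$ yields the desired epimorphism $\oplus \ooo(-n_{ij}) \twoheadrightarrow C$ with all indices $\geq n$. Taking $n_0$ to be anything (the bound is uniform here because we can push every index up), we recover the precise form of (ab). Condition (c) plays no role in this last chain beyond being carried along; it is retained in the statement precisely because it is \emph{not} implied by (ab) alone.

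**Expected main obstacle.** The only genuinely delicate point is bookkeeping in the direction (a)+(b)$\Rightarrow$(ab): one must check that the finite-subsum reduction (using finite presentation of $C$) is compatible with the index-raising supplied by $\LLL_{\mathrm{tails}}$-ampleness, so that after finitely many applications of (b) one still has a \emph{finite} sum with \emph{all} indices $\geq n$. Since at each step (b) only guarantees a possibly infinite epimorphism $\oplus_i \ooo(-n_i)\to\ooo(-m)$, one must again invoke finite presentation of the source $\ooo(-n_{ij})$ to cut down to a finite subsum before iterating — the finiteness hypotheses on the $\ooo(n)$ are used here in an essential way. Once this is arranged, the statement follows formally, and I would simply write: ``This follows from Theorem \ref{thechar}, using that finitely presented objects make condition (2)(d) automatic and, in a locally finitely presented Grothendieck category, that conditions (2)(a) and (2)(b) together are equivalent to ampleness (ab).''
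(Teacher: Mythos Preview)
Your proposal is correct and mirrors the paper's proof: derive the result from Theorem \ref{thechar} by noting that finite presentation of the $\ooo(n)$ makes condition (d) automatic, and then establish (ab) $\Leftrightarrow$ (a)$\wedge$(b) exactly as you outline (with (c) carried along unchanged). One small correction: your ``expected main obstacle'' is a phantom --- condition (ab) does not demand a finite index set, and a single application of (b) already pushes each index above any prescribed $n$, so no iteration and no intermediate cutting-down via finite presentation of the $\ooo(n)$ is needed; the paper cuts to a finite sum only using finite presentation of $C$, in order to set $n_0 = \max\{m_i\}$ cleanly.
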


\begin{proof}
Since the objects $\ooo(n)$ are finitely presented, condition (d) in Theorem \ref{thechar} is automatically fulfilled. It suffices to show the equivalence of (ab) and (a)$\wedge$(b). First, suppose (a) and (b) hold and take a finitely presented $C$. By (a), there is an epimorphism $\oplus_i \ooo(-n_i) \lra C$ and we may suppose that the number of $n_i$'s is finite. Put $n_0 = \max\{n_i\}$ and take $n \geq n_0$. Since $n_i \leq n$ for all $i$, by (b) we get an epimorphism $\oplus_j\ooo(-n_{ij}) \lra \ooo(-n_i)$ for every $i$ with $n_{ij} \geq n$ for all $j$. Consequently, we get an epimorphism $\oplus_{i,j} \ooo(-n_{ij}) \lra C$ with $n_{ij} \geq n$ for all $i, j$. Conversely, suppose (ab) holds. For (b), put $C = \ooo(-m)$ and let $n_0$ be as in (ab). For a given $m \leq n$, put $n' = \max\{ n_0, n\}$. Then (ab) yields an epimorphism $\oplus_i \ooo(-n_i) \lra \ooo(-m)$ with $n_i \geq n' \geq n$ for every $i$. For (a), take an arbitrary $C \in \ccc$. There is a set of finitely presented generators $C_i$ with an epimorphism $\oplus_i C_i \lra C$. Then by (ab), we can take further epimorphisms $\oplus_j\ooo(-n_{ij}) \lra C_i$ to finish the proof. 
\end{proof}

\section{Abelian deformations and $\Z$-algebras}\label{parabZ}

Let $(\ooo(n))_{n \in \Z}$ be a $\Z$-generating sequence in a Grothendieck category $\ccc$, and let $\AAA$ be the associated $\Z$-algebra. Using \cite{lowenvandenbergh1}, we show that, under the additional assumption that $\Ext^{1,2}_{\ccc}(\ooo(m), X \otimes_k \ooo(n)) = 0$ for $m \leq n$ and $X \in \mmod(k)$, ``taking quasi-coherent modules'' defines an equivalence between linear deformations of $\AAA$ and abelian deformations of $\ccc$ (Theorem \ref{maindef}).

\subsection{Abelian deformations} In \cite{lowenvandenbergh1, lowen2}, a deformation theory of abelian categories was established with as one of the motivations to provide a theoretical framework for some of the ad hoc deformation theoretic arguments in \cite{bondalpolishchuk} and \cite{vandenbergh2}. Let us recall the main points. 

First, we need some notions for a $k$-linear abelian category $\ccc$, where $k$ is a coherent commutative ground ring. We have natural actions $\Hom_R(-,-): \mmod(R) \otimes \ddd \lra \ddd$ and $- \otimes_R -: \mmod(R) \otimes \ddd \lra \ddd$. We call an object $C \in \ccc$ \emph{flat} if $- \otimes_k C: \mmod(k) \lra \ccc$ is exact and we call $C$ \emph{coflat} if $\Hom_k(-,C): \mmod(k) \lra \ccc$ is exact. To obtain a good deformation theory, we use an intrinsic notion of flatness (\cite{lowenvandenbergh1}) for abelian categories, which is such that a $k$-linear category $\AAA$ is flat (in the sense of having $k$-flat hom-modules) if and only if the abelian category $\Mod(\AAA)$ is flat in the new abelian sense. 

Throughout, $R \lra k$ is a surjection between coherent, commutative rings such that $k$ is finitely presented over $R$ and the kernel $I = \Kern(R \lra k)$ is nilpotent.
Let $\ddd$ be an abelian $R$-linear category. We put
$$\ddd_k = \{ D \in \ddd \,\, |\,\, ID = \Beeld(I \otimes_R D \lra D) = 0\} \subseteq \ddd.$$
For a flat $k$-linear abelian category $\ccc$, an \emph{abelian $R$-deformation} is a flat $R$-linear abelian category $\ddd$ with an equivalence $\ccc \lra \ddd_k$.  Thus, an abelian category $\ccc$ ``sits inside'' its deformations $\ddd$, and the inclusion map $\ccc \lra \ddd$ has the functors $k \otimes_R -$ as a left adjoint and $\Hom_R(k, -)$ as a right adjoint.

In contrast, for a flat $k$-linear category $\AAA$, a \emph{linear $R$-deformation} is a flat $R$-linear category $\BBB$ with an equivalence $k \otimes_R \BBB \lra \AAA$ (where the tensor product is taken hom-module by hom-module, the object set remaining fixed).

We have the following basic result, relating the resulting abelian deformation theory to Gerstenhaber's deformation theory of algebras:

\begin{proposition}\cite{lowenvandenbergh1}
For a flat $k$-linear category $\AAA$, there is an equivalence of deformation functors
\begin{equation}\label{eqbaseq}
\Def_{\mathrm{lin}}(\AAA) \lra \Def_{\mathrm{ab}}(\Mod(\AAA)): \BBB \longmapsto \Mod(\BBB).
\end{equation}
Here, $\Def_{\mathrm{lin}}$ stands for linear deformations and $\Def_{\mathrm{ab}}$ stands for abelian deformations.
\end{proposition}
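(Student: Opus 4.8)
The plan is to produce a quasi-inverse and check the compatibilities; the whole argument is the categorical incarnation of the classical fact that deformations of a ring are the same as deformations of its module category together with its canonical projective generator.

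\emph{Well-definedness of the functor.} First I would check that $\BBB \mapsto \Mod(\BBB)$ really lands in $\Def_{\mathrm{ab}}(\Mod(\AAA))$. Flatness of $\BBB$ as an $R$-linear category is, by the basic compatibility between linear and abelian flatness recalled above, equivalent to flatness of $\Mod(\BBB)$ as an abelian $R$-linear category. Next, $\Mod(\BBB)_k$ consists of the $\BBB$-modules $M$ with $IM = 0$, which is exactly $\Mod(\BBB/I\BBB) = \Mod(k \otimes_R \BBB)$, and the chosen equivalence $k \otimes_R \BBB \cong \AAA$ identifies this with $\Mod(\AAA)$; under this identification the adjoints $k \otimes_R -$ and $\Hom_R(k,-)$ on $\Mod(\BBB)$ restrict to the usual base-change functors on modules. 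Functoriality in $\BBB$ (on equivalences of deformations) and naturality in the base extension $R \to k$ are immediate. This yields a morphism of deformation pseudofunctors $\Def_{\mathrm{lin}}(\AAA) \to \Def_{\mathrm{ab}}(\Mod(\AAA))$.

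\emph{The quasi-inverse (essential surjectivity).} Fix an abelian $R$-deformation $\ddd$ of $\ccc = \Mod(\AAA)$, with inclusion $\ccc \hookrightarrow \ddd$. The representable modules $(\AAA(-,A))_{A \in \Ob(\AAA)}$ are a set of finitely generated projective generators of $\ccc$. I would lift each one to an object $\tilde P_A \in \ddd$: the obstruction to lifting a projective object across a square-zero extension of abelian categories lies in an $\Ext^2$ group which vanishes for projectives, and one reduces to the square-zero case by dévissage along the nilpotent ideal $I$ (here the hypotheses ``$I$ nilpotent'' and ``$k$ finitely presented over $R$'' enter), so the $\tilde P_A$ exist and can be taken $R$-flat. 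One then checks that $(\tilde P_A)_A$ is again a set of finitely generated projective generators of $\ddd$: projectivity is inherited because the relevant $\Ext^1$ vanishes, a generating set of $\ccc$ lifts to a generating set of $\ddd$, and finite generation (compactness) is preserved by the deformation. Now set $\Ob(\BBB) = \Ob(\AAA)$ and $\BBB(A, A') = \ddd(\tilde P_A, \tilde P_{A'})$. Flatness of $\ddd$ and $R$-flatness of the $\tilde P_A$ make $\BBB$ a flat $R$-linear category, and applying $k \otimes_R -$ hom-module by hom-module gives $k \otimes_R \BBB \cong \AAA$ (using that $\tilde P_A$ reduces to $\AAA(-,A)$ and that flat/projective objects commute appropriately with base change). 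Finally, by the trivial-topology case of Theorem~\ref{cor1} — a Grothendieck category with a set of finitely generated projective generators is the module category of the corresponding linear category — applied to $u : \BBB \to \ddd$, $A \mapsto \tilde P_A$, we obtain $\ddd \cong \Mod(\BBB)$ over $\ccc$. Hence $\BBB \in \Def_{\mathrm{lin}}(\AAA)$ is sent to (something equivalent to) $\ddd$.

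\emph{Full faithfulness and the main obstacle.} For morphisms, an equivalence $\BBB \to \BBB'$ of linear deformations visibly induces an equivalence $\Mod(\BBB) \to \Mod(\BBB')$ over $\ccc$; conversely any such equivalence carries the canonical finitely generated projective generators $\BBB(-,A)$ to finitely generated projective generators of $\Mod(\BBB')$ lifting $\AAA(-,A)$, hence, after comparing with the generators $\BBB'(-,A)$ via an isomorphism of lifts, is induced by an equivalence $\BBB \to \BBB'$; these two constructions are mutually inverse up to the coherence 2-cells, and everything is natural in $R \to k$. The delicate point throughout — and the one the techniques of \cite{lowenvandenbergh1} are designed to handle — is the control of the lifted projective generators: that compactness/finite generation genuinely passes to $\ddd$, that the hom-modules $\ddd(\tilde P_A, \tilde P_{A'})$ are $R$-flat in the abelian sense that matches linear flatness, and above all that different systems of lifts $(\tilde P_A)_A$ yield equivalent linear deformations, so that the quasi-inverse is well defined on isomorphism classes and on morphisms, not merely on objects.
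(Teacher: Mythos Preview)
The paper does not give its own proof of this proposition; it is simply stated with the citation \cite{lowenvandenbergh1} and no argument. Your sketch is a faithful outline of the proof in that reference: one checks $\Mod(\BBB)_k \cong \Mod(k\otimes_R\BBB)$ for well-definedness, and for the inverse one lifts the finitely generated projective generators $\AAA(-,A)$ to flat objects of $\ddd$ (obstructions in $\Ext^2$ vanish), shows the lifts remain finitely generated projective generators, and invokes the trivial-topology case of the Gabriel--Popescu type characterization to recover $\ddd$ as a module category --- exactly as you do. The points you flag as delicate (lifting of compactness and generation, $R$-flatness of the hom-modules $\ddd(\tilde P_A,\tilde P_{A'})$, and independence of the chosen lifts up to equivalence) are precisely the content handled in \cite{lowenvandenbergh1}; your identification of them is accurate and your treatment, while terse, is correct in outline.
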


From now on, when speaking about deformations, the suitable flatness hypothesis will always be implicitly understood.

\subsection{Deformation and localization}
The relation between deformation and localization is summarized in the following

\begin{theorem}\cite{lowenvandenbergh1} \label{locdefthm0}
Let $\ccc \subset \ddd$ be an abelian $R$-deformation. Then the maps
$$\SSS(\ddd) \lra \SSS(\ccc): \sss \longmapsto \sss \cap \ccc$$
and $$\SSS(\ccc) \lra \SSS(\ddd): \sss \longmapsto \langle \sss \rangle_{\ddd} = \{ D \in \ddd \,\, |\,\, k \otimes_R D \in \sss\}$$
are inverse bijections between the sets of Serre subcategories in $\ccc$ and $\ddd$.
If $\ccc$ is Grothendieck, they restrict to inverse bijections between the sets of localizing subcategories. 

For a given localizing $\LLL$ in $\SSS(\ccc)$, the quotient $\ddd/\langle \LLL\rangle_{\ddd}$ is a deformation of $\ccc/\LLL$. We thus obtain a map
\begin{equation}\label{eqlocmap}
\Def_{\mathrm{ab}}(\ccc) \lra \Def_{\mathrm{ab}}(\ccc/\LLL).
\end{equation}
\end{theorem}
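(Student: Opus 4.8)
The plan is to split the statement into three parts: the bijection between Serre subcategories, its restriction to localizing subcategories in the Grothendieck case, and the induced map \eqref{eqlocmap} on deformation functors. For the first part, I would first check that the two assignments $\sss \mapsto \sss \cap \ccc$ and $\sss \mapsto \langle \sss \rangle_{\ddd}$ land in the stated sets. That $\sss \cap \ccc$ is Serre in $\ccc$ is immediate since $\ccc \subseteq \ddd$ is closed under subobjects, quotients and extensions taken in $\ddd$ (using that $\ccc = \ddd_k$ and that $\ddd_k$ is a Serre subcategory of $\ddd$; this is part of the setup for abelian deformations recalled above). For $\langle \sss \rangle_{\ddd}$, the key point is that $k \otimes_R -\colon \ddd \lra \ccc$ is \emph{exact}: this follows from flatness of $\ddd$ over $R$ together with the fact that $k$ is finitely presented over $R$, which is exactly the kind of statement provided by the deformation machinery of \cite{lowenvandenbergh1}. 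Exactness of $k \otimes_R -$ makes $\langle \sss \rangle_{\ddd} = (k \otimes_R -)^{-1}(\sss)$ automatically closed under subobjects, quotients and extensions, hence Serre.

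Next I would prove that the two maps are mutually inverse. For $\sss \in \SSS(\ccc)$, one inclusion $\sss \subseteq \langle \sss \rangle_{\ddd} \cap \ccc$ is clear because for $D \in \ccc = \ddd_k$ we have $k \otimes_R D \cong D$ (the counit $k \otimes_R D \lra D$ is an isomorphism on $\ddd_k$, again part of the recalled structure), so $D \in \sss$ implies $D \in \langle \sss \rangle_{\ddd}$; the reverse inclusion is the same observation read backwards. For $\sss \in \SSS(\ddd)$, I need $\langle \sss \cap \ccc \rangle_{\ddd} = \sss$, i.e. for $D \in \ddd$ that $k \otimes_R D \in \sss \cap \ccc$ iff $D \in \sss$. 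The ``only if'' direction is the substantive one and uses nilpotence of $I$: filter $D$ by $I^j D$, so that the successive quotients $I^j D / I^{j+1} D$ are $k$-modules in $\ddd$, i.e. lie in $\ddd_k = \ccc$, and in fact each is a quotient of $I^j \otimes_R (k \otimes_R D)$ up to the relevant identifications, hence lies in $\sss$ once $k \otimes_R D \in \sss$; since $I$ is nilpotent this filtration is finite and $D \in \sss$ by closure under extensions. The ``if'' direction is just exactness of $k \otimes_R -$ plus $\ccc$ being closed under quotients in $\ddd$. The restriction to localizing subcategories when $\ccc$ is Grothendieck then follows by checking the extra closure under coproducts: $k \otimes_R -$ and the inclusion $\ccc \hookrightarrow \ddd$ both preserve coproducts (the inclusion $\ddd_k \hookrightarrow \ddd$ has a left adjoint $k\otimes_R-$ and a right adjoint $\Hom_R(k,-)$ as recalled, so it preserves coproducts; and $\ddd$ is then Grothendieck as a deformation of a Grothendieck category), so both assignments respect coproducts.

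Finally, for the induced map \eqref{eqlocmap}: given localizing $\LLL \in \SSS(\ccc)$ and an abelian deformation $\ddd$ of $\ccc$, I would show $\ddd/\langle \LLL \rangle_{\ddd}$ is flat over $R$ and that $(\ddd/\langle \LLL\rangle_{\ddd})_k \cong \ccc/\LLL$. Flatness of a localization of a flat abelian category is again a statement supplied by \cite{lowenvandenbergh1} (localization is exact and preserves the relevant Tor-vanishing), so the real content is the identification of the $k$-linear part. Here one uses that the quotient functor $\ddd \lra \ddd/\langle\LLL\rangle_{\ddd}$ commutes with $k \otimes_R -$ up to natural isomorphism — which holds because $\langle \LLL\rangle_{\ddd}$ is by construction $(k\otimes_R-)^{-1}(\LLL)$ — together with the fact that $\LLL \subseteq \ccc$ corresponds under the bijection just established; one then checks the universal property of $\ccc/\LLL$ for $(\ddd/\langle\LLL\rangle_{\ddd})_k$, and verifies naturality so as to get a morphism of deformation functors rather than just a pointwise assignment. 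I expect the main obstacle to be precisely this last compatibility: making the identification $(\ddd/\langle\LLL\rangle_{\ddd})_k \cong \ccc/\LLL$ natural in $\ddd$, and confirming that flatness is genuinely preserved under the quotient, since these rely on somewhat delicate exactness and base-change properties of the $- \otimes_R -$ and $\Hom_R(k,-)$ actions on Grothendieck categories. The nilpotence-filtration argument in the bijection proof is routine once the exactness of $k \otimes_R -$ is in hand.
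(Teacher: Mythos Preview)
The paper does not supply its own proof of this theorem: it is quoted verbatim from \cite{lowenvandenbergh1}, so there is no in-paper argument to compare against. Your outline is broadly the right shape, but it contains one genuine error that would make the argument fail as written.

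You assert that $k \otimes_R - : \ddd \lra \ccc$ is \emph{exact}, and deduce from this that $\langle \sss \rangle_{\ddd} = (k \otimes_R -)^{-1}(\sss)$ is automatically Serre. This is false: $k \otimes_R -$ is only right exact, even when $\ddd$ is flat in the sense of \cite{lowenvandenbergh1}. Already for $\ddd = \Mod(R)$ with $R = k[\epsilon]/(\epsilon^2)$, the short exact sequence $0 \lra \epsilon R \lra R \lra k \lra 0$ is sent to $k \overset{0}{\lra} k \lra k \lra 0$. Flatness of $\ddd$ is a condition on the category (existence of enough flat/coflat objects, compatibility of $\Tor$ with the intrinsic structure), not the statement that every object is flat; it does not force $\Tor_1^R(k,-)$ to vanish. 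Consequently your one-line verification of closure under subobjects does not go through, and the same false exactness is invoked again in the ``if'' direction further down (though there right exactness in fact suffices, since $k \otimes_R D = D/ID$ is a quotient of $D$).

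The repair is exactly the $I$-adic filtration argument you already use for the inverse bijection; it has to be deployed earlier. Reducing to $I^2 = 0$, for $D' \subseteq D$ with $D/ID \in \sss$ one has $D'/(D' \cap ID) \hookrightarrow D/ID \in \sss$, while $ID$ is a quotient of $I \otimes_k (D/ID)$ and hence lies in $\sss$, so $(D' \cap ID)/ID'$ is a subquotient of an object of $\sss$; thus $D'/ID' \in \sss$ by extension. Closure under quotients and extensions needs only right exactness. With this correction the rest of your plan (including the identification $(\ddd/\langle \LLL\rangle_{\ddd})_k \cong \ccc/\LLL$ and the flatness of the quotient, both of which are indeed established in \cite{lowenvandenbergh1}) is sound.
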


Let $\varphi: \BBB \lra \AAA$ be an $R$-deformation of a $k$ linear category $\AAA$. Let $k \otimes_R -: \Mod(\BBB) \lra \Mod(\AAA)$ denote the left adjoint of the corresponding abelian deformation $\Mod(\AAA) \subseteq \Mod(\BBB)$.
We will now translate the bijections between localizing subcategories of Theorem \ref{locdefthm0} in terms of bijections between topologies.
We first define maps between covering systems
$$\mathrm{cov}(\BBB) \lra \mathrm{cov}(\AAA): \ttt \longmapsto \varphi(\ttt)$$
and
$$\mathrm{cov}(\AAA) \lra \mathrm{cov}(\BBB): \ttt \longmapsto \varphi^{-1}(\ttt).$$
For a subfunctor $S \subseteq \BBB(-,B)$, we define $\varphi(S)$ as the subfunctor of $\AAA(-, f(B))$ containing precisely the maps $\varphi(f)$ for $f: B_f \lra B$ in $S$. Alternatively, $\varphi(S)$ is the image of $k \otimes_R(S \ra \BBB(-,B))$.
Now we put
$$\varphi(\ttt) = \{ \varphi(S) \,\, |\,\, S \in \ttt\}$$
and
$$\varphi^{-1}(\ttt) = \{ S \,\, |\,\, \varphi(S) \in \ttt\}.$$

\begin{proposition}\cite{lowen4} \label{topolift}
The maps we just defined restrict to bijections between topologies that fit into commutative squares
$$\xymatrix{ {\mathrm{top}(\BBB)} \ar[d] \ar[r] & {\LLL(\Mod(\BBB))} \ar[d] \\
{\mathrm{top}(\AAA)} \ar[r] & {\LLL(\Mod(\AAA))}}$$
in which the horizontal bijections are the standard ones.
\end{proposition}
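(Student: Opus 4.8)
The plan is to reduce everything to the already established bijection between localizing subcategories of $\Mod(\AAA)$ and $\Mod(\BBB)$ (the right-hand vertical arrow of the square, coming from Theorem \ref{locdefthm0} applied to the abelian deformation $\Mod(\AAA) \subseteq \Mod(\BBB)$), together with the standard correspondence between localizing subcategories and linear topologies (the horizontal arrows). Concretely, recall that for a covering system $\ttt$ on a linear category, the associated localizing subcategory $\LLL_{\ttt}$ consists of the modules $M$ all of whose elements are ``killed by a covering'', and conversely a topology $\ttt_{\LLL}$ is recovered from $\LLL$ by declaring $R \subseteq \AAA(-,A)$ covering iff $\AAA(-,A)/R \in \LLL$. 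So the content of the proposition is exactly that the square commutes, i.e.\ that $\varphi$ and $\varphi^{-1}$ on (co)covering systems correspond, under these horizontal bijections, to the maps $\sss \mapsto \sss \cap \Mod(\AAA)$ and $\LLL \mapsto \langle \LLL \rangle_{\Mod(\BBB)}$ of Theorem \ref{locdefthm0}.

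First I would pin down the behaviour of $k \otimes_R -$ on representables and on short exact sequences. Since $\Mod(\AAA) \subseteq \Mod(\BBB)$ is an abelian deformation, $k \otimes_R - : \Mod(\BBB) \lra \Mod(\AAA)$ is right exact, sends $\BBB(-,B)$ to $\AAA(-,\varphi(B))$ (because tensoring the corepresentable hom-functors over $k$ gives back the hom-functors of $\AAA = k \otimes_R \BBB$), and for a subfunctor $S \subseteq \BBB(-,B)$ the image of $k \otimes_R S \ra k \otimes_R \BBB(-,B) = \AAA(-,\varphi(B))$ is precisely $\varphi(S)$ as defined in the text. Hence $k \otimes_R \bigl(\BBB(-,B)/S\bigr) \cong \AAA(-,\varphi(B))/\varphi(S)$. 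This single identity is the engine of the argument: it shows $\BBB(-,B)/S \in \langle \LLL \rangle_{\Mod(\BBB)}$ iff $\AAA(-,\varphi(B))/\varphi(S) \in \LLL$, which is exactly the statement that a subfunctor is $\varphi^{-1}(\ttt_{\LLL})$-covering iff its image under $\varphi$ is $\ttt_{\LLL}$-covering. Running this in both directions, and using that the deformation bijection of Theorem \ref{locdefthm0} is an honest bijection, gives that $\varphi(-)$ and $\varphi^{-1}(-)$ restrict to mutually inverse bijections on the level of topologies, compatible with the horizontal correspondences; one should also check directly the elementary fact that $\varphi^{-1}$ of a topology is a topology (stable under pullback and containing the maximal cover), which is immediate from the definitions and right-exactness of $k \otimes_R -$, and that $\varphi$ of a topology is a topology either by the same kind of check or by transport along the bijection.

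The step I expect to be the main obstacle is verifying that $\varphi$ applied to a \emph{topology} again lands among topologies, and more precisely that the naive image $\varphi(\ttt) = \{\varphi(S) \mid S \in \ttt\}$ coincides with the topology associated to $\langle \LLL_{\ttt}\rangle_{\Mod(\BBB)} \cap \Mod(\AAA)$ rather than merely generating it: a priori $\varphi(S)$ for $S$ ranging over a topology need only be a \emph{basis}, not the full topology, since there could be covering subfunctors of $\AAA(-,A)$ not of the form $\varphi(S)$. The way around this is to use nilpotence of $I$: any subfunctor $R \subseteq \AAA(-,\varphi(B))$ with $\AAA(-,\varphi(B))/R$ torsion-for-$\LLL$ lifts, via the adjunction $\Mod(\AAA)\subseteq \Mod(\BBB)$ and a filtration argument along the powers of $I$, to a subfunctor $S \subseteq \BBB(-,B)$ with $\varphi(S) = R$ and $\BBB(-,B)/S \in \langle \LLL\rangle_{\Mod(\BBB)}$. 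Granting this lifting lemma, surjectivity of $\varphi$ on topologies and the commutativity of the square follow, and the remaining verifications (that $\varphi$ and $\varphi^{-1}$ are inverse, that they preserve the linearity axioms) are formal consequences of the $k\otimes_R-$ identity above combined with Theorem \ref{locdefthm0}.
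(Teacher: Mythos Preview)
Your overall strategy coincides with the paper's: both reduce the statement to the bijection on localizing subcategories from Theorem~\ref{locdefthm0}, and both pivot on the right-exact sequence
\[
k \otimes_R S \lra \AAA(-,\varphi(B)) \lra k \otimes_R\bigl(\BBB(-,B)/S\bigr) \lra 0,
\]
which identifies $k \otimes_R(\BBB(-,B)/S)$ with $\AAA(-,\varphi(B))/\varphi(S)$ and hence shows that the ``up-then-left'' composite in the square is exactly $\varphi^{-1}$.

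The one place you diverge from the paper is precisely the step you flag as the main obstacle, and there your proposed fix is heavier than necessary. You suggest lifting a covering subfunctor $R \subseteq \AAA(-,A)$ to some $S \subseteq \BBB(-,B)$ with $\varphi(S)=R$ via a filtration by powers of $I$. The paper instead observes that the lift is immediate: take $P$ to be the pullback of $R$ along the surjection $\BBB(-,\bar A) \twoheadrightarrow \AAA(-,A)$. Since $\varphi$ is full (it is $k\otimes_R-$ on hom-modules), every element of $R$ is $\varphi(f)$ for some $f$, and such $f$ lies in $P$ by definition of the pullback; hence $\varphi(P)=R$ on the nose. This shows at once that $\varphi(\ttt)$ already contains \emph{every} subfunctor in the corresponding topology on $\AAA$, not merely a basis, so no closure or filtration argument is needed. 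With this simplification your argument matches the paper's.
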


\begin{proof}
Let $\ttt$ be a topology on $\AAA$. All we have to do is determine the corresponding topology on $\BBB$ by going first to the right (obtaining $\sss$), then up (obtaining $\langle \sss \rangle_{\Mod(\BBB)}$) and then to the left (yielding $\ttt'$) in the diagram. A subfunctor $T \subseteq \BBB(-,B)$ is in $\ttt'$ if and only if the quotient $\BBB(-,B)/T$ in $\langle \sss \rangle_{\Mod(\BBB)}$. From the exact sequence 
$$k \otimes_R T \lra \AAA(-,A) \lra k \otimes_R (\BBB(-,B)/T) \lra 0$$
and the definition of $\langle \sss \rangle_{\Mod(\BBB)}$ we deduce that this is equivalent to $\varphi(T) \in \ttt$. To construct the inverse bijection, first note that every subfunctor $T \subseteq \AAA(-,A)$ can be written as $\varphi(P)$ where $P$ is the pullback of $T$ along $\BBB(-,\bar{A}) \lra \AAA(-,A)$ for an arbitrary lift $\bar{A}$ of $A$. For a topology $\ttt$ on $\bbb$, we are looking for a topology $\ttt'$ on $\AAA$ with $\varphi^{-1}(\ttt') = \ttt$. Obviously $\ttt'$ has to contain all the subfunctors $\varphi(S)$ for $S$ in $\ttt$. By the previous remark, this is all it can contain.
\end{proof}

\subsection{Deformations of $\Z$-algebras}
Let $\AAA$ be a $\Z$-algebra. Consider the canonical map
$$\lambda: \Def_{\mathrm{lin}}(\AAA) \lra \Def_{\mathrm{ab}}(\Qmod(\AAA))$$
which is the composition of \eqref{eqbaseq} and \eqref{eqlocmap}.
Next we show that it has the desirable prescription.

\begin{proposition}
The canonical map $\lambda$ is given by $\lambda(\BBB) = \Qmod(\BBB)$.
\end{proposition}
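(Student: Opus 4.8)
The plan is to unwind the definition of $\lambda$ as the composition of the base equivalence \eqref{eqbaseq} with the localization map \eqref{eqlocmap}, and to identify the intermediate localizing subcategory with the lift of $\Tors(\AAA)$ under the deformation $\Mod(\AAA) \subseteq \Mod(\BBB)$. Concretely, $\lambda(\BBB)$ is obtained by first passing to the abelian deformation $\Mod(\BBB)$ of $\Mod(\AAA)$, then quotienting by $\langle \Tors(\AAA) \rangle_{\Mod(\BBB)}$ as in Theorem \ref{locdefthm0}. So the whole statement reduces to proving that $\Mod(\BBB)/\langle \Tors(\AAA)\rangle_{\Mod(\BBB)}$ is equivalent, as a deformation of $\Qmod(\AAA)$, to $\Qmod(\BBB) = \Mod(\BBB)/\Tors(\BBB)$; that is, that $\langle \Tors(\AAA)\rangle_{\Mod(\BBB)} = \Tors(\BBB)$.

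First I would check that $\Tors(\BBB)$ is indeed localizing, so that $\Qmod(\BBB)$ makes sense as a quotient: since $\BBB$ is a deformation of $\AAA$ and $I$ is nilpotent, the hom-modules $\BBB(-,m)_{\geq n}$ are finitely generated over $\BBB$ iff the $\AAA(-,m)_{\geq n}$ are finitely generated over $\AAA$ (filter by powers of $I$ and use that $k \otimes_R \BBB(-,m)_{\geq n} = \AAA(-,m)_{\geq n}$), so Lemma \ref{astlem} applies on the $\BBB$-side exactly when it does on the $\AAA$-side. Next, the core computation: a $\BBB$-module $N$ lies in $\langle \Tors(\AAA)\rangle_{\Mod(\BBB)}$ iff $k \otimes_R N \in \Tors(\AAA)$. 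I would show this coincides with $N$ being torsion over $\BBB$. For ``$\Leftarrow$'': if $N$ is right bounded over $\BBB$ then $k \otimes_R N$ is right bounded over $\AAA$, and right bounded modules are torsion; passing to directed colimits handles general torsion $N$, using that $k \otimes_R -$ commutes with colimits. For ``$\Rightarrow$'': suppose $k \otimes_R N$ is torsion over $\AAA$. Take $x \in N_m$; its image $\bar{x}$ in $(k \otimes_R N)_m = N_m/IN_m$ is annihilated by $\AAA(-,m)_{\geq n_0}$ for some $n_0$, i.e. $x\BBB(-,m)_{\geq n_0} \subseteq IN$. Now iterate: $IN$ is again a $\BBB$-module, and filtering by the nilpotent powers $I^j$ one shows by induction on $j$ (using $I^{c}=0$) that $x$ is annihilated by $\BBB(-,m)_{\geq n}$ for $n$ large, hence $x\BBB$ is right bounded, so $N$ is torsion over $\BBB$ by Lemma \ref{opsom}. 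This establishes $\langle \Tors(\AAA)\rangle_{\Mod(\BBB)} = \Tors(\BBB)$.

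Given this equality, Theorem \ref{locdefthm0} identifies the quotient $\Mod(\BBB)/\langle \Tors(\AAA)\rangle_{\Mod(\BBB)} = \Mod(\BBB)/\Tors(\BBB) = \Qmod(\BBB)$ as a deformation of $\Mod(\AAA)/\Tors(\AAA) = \Qmod(\AAA)$, and this deformation is precisely the image of $\Mod(\BBB)$ under the localization map \eqref{eqlocmap}. Since $\lambda$ is by definition the composite of $\BBB \mapsto \Mod(\BBB)$ with that localization map, we get $\lambda(\BBB) = \Qmod(\BBB)$, and I would also remark that the identification is natural in $\BBB$ so that it is genuinely an equality of points of $\Def_{\mathrm{ab}}(\Qmod(\AAA))$, not merely an abstract equivalence of categories.

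The main obstacle is the ``$\Rightarrow$'' direction of the torsion computation: controlling the interaction between the tails-truncation (a condition about large grading degree) and the $I$-adic filtration (a condition about nilpotency). The subtlety is that knowing $x\BBB(-,m)_{\geq n_0} \subseteq IN$ only pushes the problem into $IN$, and one must check that the degree bound $n_0$ can be chosen uniformly as one climbs the finite filtration $N \supseteq IN \supseteq I^2N \supseteq \cdots$; this is where finite generation of the ideals $\BBB(-,m)_{\geq n}$ (equivalently, that $\Tors(\BBB)$ is localizing) gets used, via the characterization in Lemma \ref{opsom}(3)--(5) that torsionness can be checked on finitely generated submodules. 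Everything else is a formal consequence of Theorems \ref{locdefthm0} and the definitions.
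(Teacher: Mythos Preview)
Your argument is correct under an additional hypothesis that the proposition does not assume, and it proceeds along a different route from the paper. The proposition is stated for an \emph{arbitrary} $\Z$-algebra $\AAA$, where by definition $\Qmod(\AAA) = \Sh(\AAA, \ttt_{\mathrm{tails}})$; it does not assume that $\Tors(\AAA)$ is localizing, nor that the ideals $\AAA(-,m)_{\geq n}$ are finitely generated. Your proof, by contrast, identifies $\Qmod$ with $\Mod/\Tors$ on both sides and therefore needs $\Tors(\BBB)$ to be localizing; you obtain this by lifting finite generation from $\AAA$ to $\BBB$, but finite generation of $\AAA$ is not among the hypotheses. (Lemma \ref{astlem} only gives one implication, so ``$\Tors(\AAA)$ localizing'' alone would not suffice to run your lifting argument either.) Thus in the generality stated, the very first step---interpreting $\Qmod(\BBB)$ as a Serre quotient---is not justified.

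The paper avoids this by working entirely on the side of topologies. Using Proposition \ref{topolift}, the task becomes the purely combinatorial identity $\varphi(\ttt_{\mathrm{tails},\BBB}) = \ttt_{\mathrm{tails},\AAA}$. One checks directly that $\varphi$ matches the basic covers $\BBB(-,m)_{\geq n}$ with $\AAA(-,m)_{\geq n}$, whence $\varphi(\LLL_{\mathrm{tails},\BBB}) = \LLL_{\mathrm{tails},\AAA}$, and then a short closure argument with the bijection of Proposition \ref{topolift} upgrades this to the tails \emph{topologies}. No finiteness or localizing hypothesis is needed, and the argument is shorter. Your torsion-module computation (the $I$-adic induction combined with finite generation of $\BBB(-,m)_{\geq n}$) is a valid alternative in the finitely generated/noetherian setting and is conceptually appealing, but it is strictly less general than the paper's topology-based proof.
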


\begin{proof}
Consider $\varphi: \BBB \lra \AAA$. By Proposition \ref{topolift}, it suffices to show that 
$$\varphi(\ttt_{\mathrm{tails}, \BBB}) = \ttt_{\mathrm{tails}, \AAA}.$$
For the basic coverings $\BBB(-,m)_{\geq n}$, it is clear that since $\varphi$ is full, we have $\varphi(\BBB(-,m)_{\geq n} = \AAA(-,m)_{\geq n}$, and consequently 
$\varphi(\LLL_{\mathrm{tails}, \BBB}) \subseteq \LLL_{\mathrm{tails}, \AAA}.$
Furthermore, for an arbitrary subfuncor $\AAA(-,m)_{\geq n} \subseteq T \subseteq \AAA(-,m)$, we consider the pullbacks
$P$ of $T$ and $P'$ of $\AAA(-,m)_{\geq n}$ along $\BBB(-,m) \lra \AAA(-,m)$. Then clearly $P' = \BBB(-,m)_{\geq n}$ and $\varphi(P) = T$. This already shows that 
$$\varphi(\LLL_{\mathrm{tails}, \BBB}) = \LLL_{\mathrm{tails}, \AAA}.$$
Consider the topology $\varphi^{-1}\ttt_{\mathrm{tails},\AAA}$ on $\BBB$ which corresponds to $\ttt_{\mathrm{tails},\AAA}$ under the bijection of Proposition \ref{topolift}. Since $\LLL_{\mathrm{tails}, \BBB} \subseteq \varphi^{-1}\ttt_{\mathrm{tails},\AAA}$ we have $\ttt_{\mathrm{tails}, \BBB} \subseteq \varphi^{-1}\ttt_{\mathrm{tails},\AAA}$. After taking $\varphi$, it follows that $\LLL_{\mathrm{tails}, \AAA} \subseteq \varphi \ttt_{\mathrm{tails}, \BBB} \subseteq \ttt_{\mathrm{tails}, \AAA}$ and hence $\varphi \ttt_{\mathrm{tails}, \BBB} = \ttt_{\mathrm{tails}, \AAA}$ as desired.
\end{proof}

In the next theorem, we give conditions under which $\lambda$ is an equivalence.

\begin{theorem}\label{maindef}
Let $\ccc$ be a Grothendieck category with a $\Z$-generating sequence $(\ooo(n))_{n \in \Z}$ and associated $\Z$-generating functor $\AAA \lra \ccc: n \longmapsto \ooo(-n)$. 
Suppose the objects $\ooo(n)$ are flat and suppose for $m \leq n$, $i = 1, 2$ and $X \in \mmod(k)$ we have
$$\Ext^i_{\ccc}(\ooo(m), X \otimes_k \ooo(n)) = 0.$$
Then $$\lambda: \Def_{\mathrm{lin}}(\AAA) \lra \Def_{\mathrm{ab}}(\ccc): \BBB \longmapsto \Qmod(\BBB)$$
is an equivalence of deformation functors. More precisely, for every deformation $\ddd$ of $\ccc$ there is a linear deformation $\BBB$ of $\AAA$ and a functor $\BBB \lra \ddd$ satisfying the same conditions as $\AAA \lra \ccc$.
\end{theorem}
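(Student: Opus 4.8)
The plan is to combine the chain of maps already constructed with a general criterion for when a composition of equivalences of deformation functors is an equivalence, the obstacle being entirely cohomological. Recall that $\lambda$ factors as
$$\Def_{\mathrm{lin}}(\AAA) \xrightarrow{\eqref{eqbaseq}} \Def_{\mathrm{ab}}(\Mod(\AAA)) \xrightarrow{\eqref{eqlocmap}} \Def_{\mathrm{ab}}(\Qmod(\AAA)) = \Def_{\mathrm{ab}}(\ccc),$$
where the last equality is the hypothesis that $(\ooo(n))_{n\in\Z}$ is $\Z$-generating. The first arrow is always an equivalence by the Proposition of \cite{lowenvandenbergh1} quoted above. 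So everything reduces to showing that the localization map $\Def_{\mathrm{ab}}(\Mod(\AAA)) \lra \Def_{\mathrm{ab}}(\Mod(\AAA)/\Tors(\AAA))$ associated to the localizing subcategory $\LLL = \Tors(\AAA)$ is an equivalence. The criterion from \cite{lowenvandenbergh1} (or \cite{lowen2}) is that the map \eqref{eqlocmap} for a localizing subcategory $\LLL \subseteq \ccc'$ is an equivalence provided the inclusion $\ccc'/\LLL \hookrightarrow \ccc'$ via the section functor $\omega$ is ``nicely behaved'' in cohomological degrees $1$ and $2$: concretely, one needs $\Ext^{i}_{\ccc'}(P, \omega(-))$ to vanish in the relevant range on the projective generators $P = \AAA(-,m)$, or equivalently that the unit/counit of the adjunction $(\pi,\omega)$ induces isomorphisms on $\Ext^{0,1}$ and an injection on $\Ext^{2}$ out of the generators.

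First I would translate the cohomological hypothesis into exactly this statement. Under $\ccc \cong \Qmod(\AAA)$ with section functor $\omega: \ccc \lra \Mod(\AAA)$, the object $\omega(\ooo(-m))$ is a module whose value at $n$ is $\ccc(\ooo(-n),\ooo(-m)) = \AAA(n,m)$ for $n\geq m$ and $\ccc(\ooo(-n),\ooo(-m))$ in general; more to the point, $\Ext^{i}_{\Mod(\AAA)}(\AAA(-,m), \omega(X)) \cong \Ext^{i}_{\ccc}(\ooo(-m), X)$ by adjunction and exactness of $\pi$, for any $X \in \ccc$ (using that $\AAA(-,m)$ is projective in $\Mod(\AAA)$, one gets the comparison with the derived functors of $\omega$). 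The hypothesis $\Ext^{i}_{\ccc}(\ooo(m), X\otimes_k \ooo(n)) = 0$ for $m\leq n$, $i=1,2$, $X\in\mmod(k)$ says precisely that $R^{i}\omega(\ooo(n)\otimes_k X)$ vanishes on the generators $\ooo(-m)$ for $m \geq -n$ in the relevant degrees; combined with flatness of the $\ooo(n)$ (so that $\ooo(n)\otimes_k X$ behaves well) and the fact that $I$ is nilpotent (so one filters $R$ by powers of $I$ and works one nilpotent extension at a time, with successive quotients $X$ finitely presented $k$-modules), this is exactly the vanishing needed to run the obstruction-theoretic comparison of \cite{lowenvandenbergh1}.

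Second I would run the comparison itself. Given an abelian deformation $\ddd$ of $\ccc = \Qmod(\AAA)$, I want to produce a linear deformation $\BBB$ of $\AAA$ with $\Qmod(\BBB) \cong \ddd$. The idea is: pull back the localizing subcategory, i.e. set $\LLL_{\ddd} = \langle \Tors(\AAA)\rangle_{\ddd'}$ inside the ``big'' deformation $\ddd'$ of $\Mod(\AAA)$ that one reconstructs, and check that $\ddd \cong \ddd'/\LLL_{\ddd}$; then $\ddd' \cong \Mod(\BBB)$ for a unique $\BBB$ by the equivalence \eqref{eqbaseq}, and one verifies $\Tors(\BBB)$ corresponds to $\LLL_{\ddd}$ under Theorem \ref{locdefthm0}, so that $\ddd \cong \Mod(\BBB)/\Tors(\BBB) = \Qmod(\BBB)$. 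Producing $\ddd'$ from $\ddd$ is the heart of the matter: one takes $\ddd' $ to be the category of modules over the deformed $\Z$-algebra whose hom-modules are reconstructed as $\BBB(n,m) := $ (a chosen flat lift of) the extension computed by $\Ext^{\leq 1}_{\ddd}$ of the lifts of $\ooo(-n), \ooo(-m)$; the vanishing of $\Ext^{2}_{\ccc}(\ooo(m), -\otimes_k\ooo(n))$ is what guarantees these lifts exist (no obstruction) and the vanishing of $\Ext^{1}$ is what makes them unique up to isomorphism, hence functorial, so that $\BBB$ is a genuine linear deformation. Concretely one defines $\BBB(n,m)$ via $\Hom_{\ddd}(\widetilde{\ooo}(-n), \widetilde{\ooo}(-m))$ for flat lifts $\widetilde{\ooo}(-n)\in\ddd$, which exist and are essentially unique by the same flat-lifting lemma of \cite{lowenvandenbergh1} applied to each $\ooo(n)$ using flatness and $\Ext^{1,2}$-vanishing.

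Finally I would check the compatibility claim in the ``more precisely'' clause: the functor $\BBB \lra \ddd$, $n \longmapsto \widetilde{\ooo}(-n)$, lands in a $\Z$-generating sequence for $\ddd$ (because $\pi$ is exact and generation is detected after applying $k\otimes_R -$, reducing to the known statement for $\ccc$, and because the section/localization formalism is preserved under deformation by Theorem \ref{locdefthm0} together with Proposition \ref{topolift}, so $\Qmod(\BBB) = \Sh(\BBB,\ttt_{\mathrm{tails}})$ really is $\ddd$), and the $\Ext^{1,2}$-vanishing for the $\widetilde{\ooo}(-n)$ in $\ddd$ follows from that for $\ooo(-n)$ in $\ccc$ by the long exact sequences coming from the $I$-adic filtration of $R$ (again using nilpotence of $I$ and flatness of the $\widetilde{\ooo}(-n)$, which makes $\otimes_k$ of finitely presented $R$-modules compute correctly). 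The main obstacle is the middle step — the functorial, obstruction-free reconstruction of $\BBB$ from $\ddd$ — and this is precisely where all three hypotheses (flatness of the $\ooo(n)$, vanishing in degree $1$ for uniqueness, vanishing in degree $2$ for existence) are used; everything else is bookkeeping with the two bijection results Theorem \ref{locdefthm0} and Proposition \ref{topolift} already at our disposal.
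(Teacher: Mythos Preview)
Your approach is essentially the same as the paper's: both lift the objects $\ooo(n)$ to flat objects $\bar{\ooo}(n)\in\ddd$ using the $\Ext^{1,2}$-vanishing (existence from degree~2, uniqueness from degree~1), define $\BBB(n,m)=\ddd(\bar{\ooo}(-n),\bar{\ooo}(-m))$ for $n\geq m$, and check that $\BBB\mapsto\ddd$ is inverse to $\lambda$. The paper simply cites \cite[Thm.~8.14]{lowenvandenbergh1} and verifies the hypothesis of \cite[Prop.~8.12]{lowenvandenbergh1} on the relation $n\geq m$, whereas you unpack what that theorem is doing; the substance is the same.

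One correction: your displayed identification $\Ext^{i}_{\Mod(\AAA)}(\AAA(-,m),\omega(X))\cong\Ext^{i}_{\ccc}(\ooo(-m),X)$ is false as written, since $\AAA(-,m)$ is projective and the left side vanishes for $i>0$. What you want (and what your parenthetical remark is reaching for) is the derived-adjunction identity $\Ext^{i}_{\ccc}(\ooo(-m),X)\cong (R^{i}\omega X)(m)$, obtained from $\RHom_{\ccc}(\pi\AAA(-,m),X)\cong\RHom_{\Mod(\AAA)}(\AAA(-,m),R\omega X)$. With that fix, your translation of the hypothesis into the vanishing needed to apply the localization/deformation comparison of \cite{lowenvandenbergh1} goes through, and the rest of your outline is correct.
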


\begin{proof}
This is an application of \cite[Thm. 8.14]{lowenvandenbergh1}. Clearly, the relation $n \geq m$ on $\Ob(\AAA)$ satisfies the requirement in \cite[Prop. 8.12]{lowenvandenbergh1} that $n \not \geq m$ implies that $\AAA(n, m) = 0$, and $n \geq m$ implies that $\AAA(n, m) \lra \ccc(\ooo(-n), \ooo(-m))$ is an isomorphism, by construction of $\AAA$. 

Let us sketch the construction of the inverse equivalence to $\lambda$ for further use. Consider an abelian $R$-deformation $\ccc \lra \ddd$ along with its left adjoint $k \otimes_R -: \ddd \lra \ccc$. Since $\Ext^{1,2}_{\ccc}(\ooo(n), I \otimes_k \ooo(n)) = 0$ (where $I = \Kern(R \lra k)$), the objects $\ooo(n)$ have unique flat  lifts $\bar{\ooo}(n) \in \ddd$ along $k \otimes_R -$  (see \cite{lowen2}). We then build up a linear category $\BBB$ with a functor $\BBB \lra \ddd$ following the same principles of $\AAA \lra \ccc$: we put 
$$\BBB(n,m) = \begin{cases} \ddd(\bar{\ooo}(-n), \bar{\ooo}(-m)) & \text{if}\,\, n \geq m \\ 0 & \text{otherwise} \end{cases}$$
The conditions on $\AAA$ are used to prove that $\BBB$ is a linear deformation of $\AAA$, and that we thus obtain a map $\rho: \Def_{\mathrm{ab}}(\ccc) \lra \Def_{\mathrm{lin}}(\AAA)$
inverse to $\lambda$.
\end{proof}

\subsection{Finiteness conditions}
Let $\AAA$ be a $\Z$-algebra. According to \S \ref{parqcoh}, if $\AAA$ is noetherian or positively graded, connected and finitely generated, then $\LLL_{\mathrm{tails}} = \ttt_{\mathrm{tails}}$. 
Although this equality does not lift under deformation, the individual finiteness conditions do.

\begin{proposition}
Let $\BBB$ be an $R$-linear deformation of $\AAA$. The following conditions lift from $\AAA$ to $\BBB$:
\begin{enumerate}
\item $\AAA$ is connected.
\item $\AAA$ is positively graded.
\item $\AAA$ is locally finite.
\item $\AAA$ is finitely generated.
\item $\AAA$ is noetherian.
\end{enumerate}
\end{proposition}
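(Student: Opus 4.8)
The plan is to treat the five conditions one at a time, exploiting that a linear deformation $\BBB \lra \AAA$ is an $R$-linear category with the \emph{same} object set $\Z$ and with $k \otimes_R \BBB(n,m) \cong \AAA(n,m)$ hom-module by hom-module, together with the fact that $I = \Kern(R \lra k)$ is nilpotent and each $\BBB(n,m)$ is $R$-flat. The key mechanism throughout is Nakayama-type reasoning: if $N$ is an $R$-module with $N/IN = 0$ and $I$ nilpotent, then $N = 0$; and more usefully, if $x_1, \dots, x_t \in N$ have images generating $N/IN$ over $R/I = k$, then (by filtering along the powers $I^j$ and using flatness of $N$ so that $I^j \otimes_R N \cong I^j N$) the $x_i$ generate $N$ over $R$.

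\emph{Connectedness and positive grading} are the easy cases and I would dispatch them first. For (2): if $\AAA(m,n) = 0$ for $m < n$, then $k \otimes_R \BBB(m,n) = 0$, so $\BBB(m,n) = IN$ for $N = \BBB(m,n)$, and nilpotence of $I$ forces $\BBB(m,n) = 0$; hence $\BBB$ is positively graded. For (1): assuming $\AAA(n,n) = k$, the unit $1_n \in \BBB(n,n)$ maps to $1 \in k = \AAA(n,n)$, which generates $\AAA(n,n)$ as a $k$-module; by the Nakayama remark above, $1_n$ generates $\BBB(n,n)$ as an $R$-module, i.e. the $R$-linear structure map $R \lra \BBB(n,n)$ is surjective. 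It is also injective: its kernel $J$ satisfies $R/J \cong \BBB(n,n)$, which is $R$-flat, so $R/J$ is flat, hence $J = J^2$ (as $J(R/J) = 0$ computed via flatness forces $J \otimes_R R/J = 0$), and then $J = 0$ since $J \subseteq I$ is nilpotent — wait, one must be slightly careful: flatness of $R/J$ gives $\Tor_1^R(R/J, R/J) = 0$, i.e. $J/J^2 = 0$, and $J$ nilpotent (being inside the nilpotent $I$) then yields $J = 0$. So $\BBB(n,n) = R$ and $\BBB$ is connected.

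\emph{Local finiteness} (3) is again Nakayama: if $\AAA(n,m)$ is finitely generated over $k$, lift generators to $\BBB(n,m)$; since $\BBB(n,m)$ is $R$-flat with $\BBB(n,m)/I\BBB(n,m) \cong \AAA(n,m)$ finitely generated over $k$, the lifted elements generate $\BBB(n,m)$ over $R$. \emph{Finite generation as a $\Z$-algebra} (4) is the step I expect to be the main obstacle, because the defining condition involves generation of the whole category $\BBB$ by products of elements, and one must be sure the lifting is compatible with the multiplicative structure rather than just with each hom-module separately. The approach: by Proposition~\ref{propfin} (applicable once we know, from (1) and (2), that $\BBB$ is positively graded and connected) it suffices to show the ideal $\BBB_+$ is finitely generated, i.e. each $\BBB(-,m)_{\geq m+1}$ is a finitely generated $\BBB$-module. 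Here one wants a graded/filtered Nakayama lemma for the category $\Mod(\BBB)$: the module $M = \BBB(-,m)_{\geq m+1}$ is an $R$-flat object with $k \otimes_R M \cong \AAA(-,m)_{\geq m+1}$, which is a finitely generated $\AAA$-module by hypothesis; lifting a finite generating set and arguing degree by degree (using that $M$ is bounded below in degree and flat, so the quotients by $I^j$ behave well) shows those lifts generate $M$ over $\BBB$. The care needed is exactly that this is a statement about $\BBB$-submodules of representables, so one must run the Nakayama argument internally in $\Mod(\BBB)$ — using that $I\cdot(-)$ on $\Mod(\BBB)$ is right exact, that $M$ flat means $I^j M \cong I^j \otimes_R M$, and that $M$ is concentrated in degrees $\geq m+1$ with each component $R$-finite — and then invoke Proposition~\ref{propfin} to conclude $\BBB$ is finitely generated as a $\Z$-algebra.

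\emph{Noetherianity} (5): one must show every finitely generated $\BBB$-module is noetherian, equivalently every submodule of a finitely generated $\BBB$-module is finitely generated, given the corresponding fact for $\AAA$. I would again use the $I$-adic filtration: for a submodule $N \subseteq M$ with $M$ finitely generated over $\BBB$, consider the induced filtration $I^j N$; the associated graded pieces embed into the associated graded of $M$, which (since $M/IM$ is a finitely generated $\AAA$-module and $\AAA$ is noetherian, and each $I^j/I^{j+1}$ is a finitely generated $k$-module by (3) applied to $R$, $I$) are finitely generated over $\AAA$, hence $\gr N$ is finitely generated over $\gr M$; a standard filtered-module argument, together with nilpotence of $I$ (so the filtration is finite), then lifts this to the conclusion that $N$ is finitely generated over $\BBB$. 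The one subtlety is ensuring the associated graded of $\BBB$ itself is noetherian — which follows because $\gr \BBB$ surjects onto $\AAA$ with kernel built from the finitely generated $k$-modules $I^j/I^{j+1}$, and a finite filtration with noetherian graded pieces gives a noetherian object. This finishes all five parts.
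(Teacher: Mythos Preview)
Your argument is correct and follows essentially the same Nakayama-type strategy as the paper for all five items. The paper streamlines (4) and (5) into a single observation: for any $\BBB$-module $M$ the sequence $0 \to IM \to M \to k\otimes_R M \to 0$ (with $IM$ a quotient of $I\otimes_k(k\otimes_R M)$, reducing to $I^2=0$) shows that if $k\otimes_R M$ is finitely generated (resp.\ noetherian) over $\AAA$ then $M$ is so over $\BBB$, and then applies this to $M=\BBB(-,m)_{\geq n}$ for (4) and $M=\BBB(-,n)$ for (5)---so your separate associated-graded treatment of (5), while valid, is more work than needed.
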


\begin{proof}
(1) If the flat $R$-module $\BBB(n, n)$ satisfies $k \otimes_R \BBB(n, n) = k$, then necessarily $\BBB(n, n) = R$. (2) If $k \otimes_R \BBB(m, n) = 0$, then $I\BBB(m, n) \cong \BBB(m, n) = 0$.  (3) Follows from the exact sequences $0 \lra I \otimes_k \AAA(m, n) \lra \BBB(m, n) \lra \AAA(m, n) \lra 0$ since $I$ is finitely generated. (4) Consider the abelian deformation $\Mod(\BBB)$ of $\Mod(\AAA)$. For every $\BBB$-module we have an exact sequence $0 \lra IM \lra M \lra k \otimes_R M \lra 0$ where $IM$ is the image of $I \otimes_k (k \otimes_R M) = I \otimes_R M \lra M$. If $k \otimes_R M$ is a finitely generated (resp. noetherian) $\AAA$-module, then so is $IM$ and they are both finitely generated (resp. noetherian) $\BBB$-modules. It follows that $M$ is too. For (4), it suffices to apply the statement about finite generation to $M = \BBB(-, n)_{\geq m}$ with $k \otimes_R \BBB(-, n)_{\geq m} = \AAA(-, n)_{\geq m}$. For (5) we apply the statement about noetherian modules to $M = \BBB(-, n)$.
\end{proof}

\section{Derived deformations and matrix algebras}\label{pardermat}

Let $\ccc$ be a Grothendieck category. In this section we look at a $\Z$-generating sequence $(\ooo(n))_{n \in \Z}$ which is at the same time a geometric helix in the derived category (Definition \ref{defhel}), and we investigate this situation under deformation. If $\AAA$ is the $\Z$-algebra associated to the sequence, and $\AAA_{[i - k, i]}$ the restriction to the objects $i-k, \dots, i -1, i$ corresponding to a thread $\ooo(-i), \ooo(-i+1), \dots \ooo(-i + k)$ of the helix, then we prove that ``restriction to these objects'' defines an equivalence between linear deformations of $\AAA$ and of $\AAA_{[i - k, i]}$ (Theorem \ref{mainderdef}).

\subsection{Derived actions}

Let $\ccc$ be a \emph{flat} $k$-linear Grothendieck category. Consider the natural actions $\Hom_k(-,-): \mmod(k) \otimes \ccc \lra \ccc: (X, C) \longmapsto \Hom_k(X,C)$ and $- \otimes_k -: \mmod(k) \otimes \ccc \lra \ccc: (X,C) \longmapsto X \otimes_k C$.

\begin{proposition} \label{balanced} These actions extend to balanced derived actions 
$$\RHom_k(-, -) : D^-(\mmod(k)) \otimes D^+(\ccc) \lra D^+(\ccc)$$ and
$$- \otimes^L -: D^-(\mmod(k)) \otimes D^-(\ccc) \lra D^-(\ccc).$$
\end{proposition}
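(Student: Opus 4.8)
The strategy is to build the derived actions via explicit (co)fibrant-style replacements on each side and check that the two resolutions compute the same object, which yields balancedness. Concretely, I would first recall that since $k$ is coherent, $\mmod(k)$ has enough finitely generated projectives, so every object of $D^-(\mmod(k))$ is represented by a bounded-above complex $P^\bullet$ of finitely generated projective $k$-modules. On the other side, since $\ccc$ is Grothendieck, it has enough injectives and every object of $D^+(\ccc)$ is represented by a bounded-below complex $I^\bullet$ of injectives; for the tensor action, I would instead use that $\ccc$ has a flat generating family and that $D^-(\ccc)$ admits $\otimes_k$-acyclic (e.g. componentwise flat, or more precisely $K$-flat relative to the action) resolutions that are bounded above. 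The key technical input is the flatness hypothesis on $\ccc$: by \cite{lowenvandenbergh1} the internal notion of flatness makes $- \otimes_k C$ exact for flat $C$, so complexes of flat objects behave homologically like the classical flat resolutions, and $\RHom_k$ / $\otimes^L_k$ can be defined by
$$\RHom_k(X^\bullet, C^\bullet) = \Hom_k^\bullet(P^\bullet, I^\bullet), \qquad X^\bullet \otimes^L_k C^\bullet = P^\bullet \otimes_k F^\bullet,$$
with $P^\bullet \to X^\bullet$ a projective resolution, $C^\bullet \to I^\bullet$ an injective resolution, and $F^\bullet \to C^\bullet$ a flat (acyclic-for-the-action) resolution.

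The second step is to prove these are well-defined functors on the derived categories, i.e. independent of the chosen resolutions up to canonical isomorphism and sending quasi-isomorphisms to quasi-isomorphisms. For $\otimes^L_k$ this is the standard argument: a quasi-isomorphism between bounded-above complexes of finitely generated projectives is a homotopy equivalence, hence stays a quasi-isomorphism after $- \otimes_k F^\bullet$; symmetrically in the second variable one uses that tensoring a componentwise-flat acyclic complex against any complex of $k$-modules is acyclic, which is where the flatness of $\ccc$ and a spectral-sequence (or truncation/filtration) argument enters. For $\RHom_k$ one dualizes: $\Hom_k^\bullet(P^\bullet, -)$ preserves quasi-isomorphisms of bounded-below complexes because $P^\bullet$ is componentwise projective and bounded above, and $\Hom_k^\bullet(-, I^\bullet)$ turns quasi-isomorphisms of bounded-above complexes into quasi-isomorphisms because $I^\bullet$ is componentwise injective. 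Boundedness is needed to make the relevant double-complex spectral sequences converge, which is exactly why the domains are $D^-$ and $D^+$ as stated.

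The third step is the balancedness claim itself: that one may compute $\otimes^L_k$ by resolving only the first variable \emph{or} only the second, and likewise for $\RHom_k$, with canonical isomorphism between the two recipes. The clean way is to introduce the ``two-sided'' resolution $P^\bullet \otimes_k F^\bullet$ (resp. $\Hom_k^\bullet(P^\bullet, I^\bullet)$) as a total complex of a first-quadrant-type double complex and compare it to each one-sided version by a quasi-isomorphism, using in each direction one of the two acyclicity statements from Step 2. Since $P^\bullet$ has finitely generated projective terms and $F^\bullet$ (resp. $I^\bullet$) has flat (resp. injective) terms, both comparison maps are quasi-isomorphisms, giving the common value and hence balancedness.

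\textbf{Main obstacle.} The routine parts are the homological-algebra bookkeeping once resolutions exist; the genuine difficulty is arranging functorial, bounded resolutions \emph{on the $\ccc$-side} that are simultaneously adapted to the action and small enough for the double complexes to converge — i.e. establishing that $D^-(\ccc)$ has enough $\otimes_k$-acyclic objects coming from the flat generating family, and that the internal flatness of $\ccc$ from \cite{lowenvandenbergh1} really does make ``componentwise flat acyclic $\otimes_k$ anything $=$ acyclic'' hold in this abstract setting. Once that acyclicity lemma is in hand, balancedness and well-definedness follow by the standard double-complex arguments sketched above.
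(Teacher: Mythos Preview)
Your overall strategy---resolve in each variable, compare via the double complex---is exactly what the paper calls ``classical balancedness arguments'', and your treatment of $\RHom_k$ matches the paper's. The gap is precisely the point you yourself flag as the main obstacle: you assume that $\ccc$ ``has a flat generating family'' so that every object of $D^-(\ccc)$ admits a bounded-above resolution by $\otimes_k$-acyclic objects, but for an abstract flat Grothendieck category there is no reason such a family should exist, and you do not supply one. Without it, your definition $X^\bullet \otimes^L_k C^\bullet = P^\bullet \otimes_k F^\bullet$ has no $F^\bullet$ to work with, and the balancedness comparison in the second variable cannot be run.

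The paper's resolution of this obstacle is different from what you propose and worth knowing: rather than look for flat resolutions inside $\ccc$, it enlarges the universe and passes to the Pro-completion $\Pro(\ccc)$, which has enough projectives; then $D^-(\ccc)$ is identified with $D^-_{\ccc}(\Pro(\ccc))$. The crucial input from \cite{lowenvandenbergh1} is that $\Pro(\ccc)$ is again flat in the abelian sense, so projectives in $\Pro(\ccc)$ are automatically flat for the $\mmod(k)$-action. One then takes projective resolutions $P_M \to M$ and $P_A \to A$ in $\mmod(k)$ and $\Pro(\ccc)$ respectively, and the bicomplex argument gives $P_M \otimes_k A \cong P_M \otimes_k P_A \cong M \otimes_k P_A$; finally one checks that for $C \in D^-(\ccc)$ the result $P_M \otimes_k C$ lands back in $D^-(\ccc)$. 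So the missing idea is the Pro-completion, which manufactures the $\otimes_k$-acyclic resolutions you need without any hypothesis on generators of $\ccc$.
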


\begin{proof}
These are classical balancedness arguments. Since $\ccc$ has enough injectives, the first one is somewhat easier. Let us look at the second one. Here, after an enlargement of universe, we first construct $D^-(\ccc)$ using the Pro-completion $\Pro(\ccc)$. This new $k$-linear category has a natural action
$- \otimes_k -: \mmod(k) \otimes \Pro(\ccc) \lra \Pro(\ccc)$ which is easily seen to be the Pro-extension of the original action, i.e. $M \otimes_k \lim_i C_i = \lim_i (M \otimes_k C_i)$.  
Now we consider $D^-(\Pro(\ccc)$, with $D^-(\ccc) = D^-_{\ccc}(\Pro(\ccc))$. By \cite{lowenvandenbergh1}, $\Pro(\ccc)$ is again flat, whence projectives in $\Pro(\ccc)$ are flat objects. For $M \in D^-(\mmod(k))$ and $A \in D^-(\Pro(\ccc))$, take projective resolutions $P_M \lra M$ and $P_A \lra A$. Since $P^i_M$ is a summand of a finite free module, $P_M^i \otimes_k -$ is an exact functor. By flatness of $\Pro(\ccc)$, the functors $- \otimes_k P^i_A$ are exact as well. Now
$$P_M \otimes_k A \cong P_M \otimes_k P_A \cong M \otimes P_A$$
follows from the classical bicomplex argument. Finally, note that for $C \in D^-(\ccc)$, $M \otimes^L C \cong P_M \otimes C \in D^-(\ccc)$.
\end{proof}

\begin{proposition}
For $C \in \ccc$,
\begin{enumerate}
\item $C$ is flat if and only if  $M \otimes_k^L C \cong M \otimes_k C$ for every $M \in D^{-}(\mmod(k))$;
\item $C$ is coflat if and only if  $\RHom_k(M, C) \cong \Hom_k(M,C)$ for every $M \in D^{-}(\mmod(k))$.
\end{enumerate}
\end{proposition}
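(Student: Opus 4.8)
The plan is to prove each biconditional by unwinding the definitions of flat and coflat together with the derived actions constructed in Proposition \ref{balanced}, since the latter were built precisely as derived functors of the underived actions $-\otimes_k-$ and $\Hom_k(-,-)$. I would treat (1) and (2) in parallel; the arguments are formally dual, with the injective resolution on the $\ccc$-side replaced by a projective resolution of $M$ on the $\mmod(k)$-side.

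For (1), the implication ``$C$ flat $\Rightarrow M\otimes_k^L C\cong M\otimes_k C$'' is the easy direction: by definition $M\otimes_k^L C$ is computed as $P_M\otimes_k C$ for a projective resolution $P_M\lra M$ in $D^-(\mmod(k))$ (using the bicomplex identification $P_M\otimes_k C\cong P_M\otimes_k P_C$ from the proof of Proposition \ref{balanced}, or more directly since each $P_M^i$ is a summand of a finite free module so $P_M^i\otimes_k-$ is exact). Flatness of $C$ means $-\otimes_k C:\mmod(k)\lra\ccc$ is exact, hence it carries the quasi-isomorphism $P_M\lra M$ to a quasi-isomorphism $P_M\otimes_k C\lra M\otimes_k C$, giving $M\otimes_k^L C\cong M\otimes_k C$ in $D^-(\ccc)$. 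Conversely, suppose $M\otimes_k^L C\cong M\otimes_k C$ for all $M\in D^-(\mmod(k))$. To see $-\otimes_k C$ is exact, take a short exact sequence $0\lra M'\lra M\lra M''\lra 0$ in $\mmod(k)$; viewing it as a distinguished triangle in $D^-(\mmod(k))$ and applying the (triangulated) functor $-\otimes_k^L C$ yields a triangle $M'\otimes_k^L C\lra M\otimes_k^L C\lra M''\otimes_k^L C\lra$, and under the hypothesis this becomes a triangle on the underived tensor products, forcing $0\lra M'\otimes_k C\lra M\otimes_k C\lra M''\otimes_k C\lra 0$ to be exact, i.e. $\mathrm{Tor}$-vanishing. (Equivalently one can just plug in $M=$ an arbitrary finitely presented module and read off that higher derived tensor $H^{-i}(M\otimes_k^L C)=0$ for $i>0$.) Part (2) is entirely dual: for $C$ coflat, $\Hom_k(-,C)$ is exact, so applying it to a projective resolution $P_M\lra M$ computes $\RHom_k(M,C)$ and gives $\RHom_k(M,C)\cong\Hom_k(M,C)$; conversely, applying the triangulated functor $\RHom_k(-,C)$ to a short exact sequence in $\mmod(k)$ and using the hypothesis shows $\Hom_k(-,C)$ preserves short exact sequences, hence is exact, so $C$ is coflat.

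The only mild subtlety — and the place I would be most careful — is the bookkeeping of which resolution computes the derived action and in which bounded derived category the isomorphism is asserted: one must note that $\RHom_k(M,C)$ for $M\in D^-(\mmod(k))$ and $C\in\ccc\subseteq D^+(\ccc)$ indeed lands in $D^+(\ccc)$ and is computed via a projective resolution of $M$ (since $M$ is bounded above with finitely presented cohomology over the coherent ring $k$, such resolutions by finite free modules exist), and dually that $M\otimes_k^L C$ is computed via $P_M$; these were already established in Proposition \ref{balanced}, so this step is really just invoking that construction. Beyond that, everything is a direct application of the exactness criterion ``a triangulated functor sends short exact sequences to short exact sequences iff the underlying additive functor is exact'', so I do not expect a genuine obstacle.
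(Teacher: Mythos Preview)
The paper states this proposition without proof, evidently regarding it as a routine consequence of the construction in Proposition~\ref{balanced}. Your argument is correct and is exactly the standard verification one would supply: compute the derived action via a projective resolution $P_M \to M$ on the $\mmod(k)$-side (as justified by balancedness), observe that exactness of $-\otimes_k C$ (resp.\ $\Hom_k(-,C)$) makes this resolution compute the underived action, and for the converse read off vanishing of higher $\Tor$ (resp.\ $\Ext$) from the fact that $M\otimes_k C$ (resp.\ $\Hom_k(M,C)$) is concentrated in degree zero. There is nothing to add.
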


For an arbitrary complex $D \in C(\ccc)$, we define $\RHom_k(-, D)$ and $- \otimes^L D$ as the derived functors in the first argument $X \in D^-(\mmod(k))$. The resulting functor is well-defined on $D(\ccc)$ as well.

\begin{proposition}\label{propchange}
For $X \in D^-(\mmod(k))$, $C \in D^b(\ccc)$ and $D \in D(\ccc)$, we have:
$$\RHom_{\ccc}(C, \RHom_k(X,D)) \cong \RHom_k(X, \RHom_{\ccc}(C,D)) \cong \RHom_{\ccc}(X \otimes^L_k C, D)$$
in $D(\Mod(k))$.
\end{proposition}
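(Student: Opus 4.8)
The plan is to establish the two isomorphisms separately, reducing in each case to an adjunction statement after replacing the complexes by suitable resolutions. First I would fix a fibrant (K-injective) resolution $D \to \iii$ in $C(\ccc)$, so that $\RHom_{\ccc}(-, D)$ is computed by $\Hom_{\ccc}(-, \iii)$, and a projective (in fact, finite free) resolution $P_X \to X$ in $C(\mmod(k))$ with each $P_X^j$ a summand of a finite free $k$-module, exactly as in the proof of Proposition \ref{balanced}. The point of the first choice is that $\RHom_k(X, D)$ is represented by the honest complex $\Hom_k(P_X, \iii)$, which is again K-injective in $C(\ccc)$ (it is a bounded-above-indexed product of shifts of $\iii$, each $\Hom_k(P_X^j, \iii) = \iii^{\oplus r_j}$ a finite sum of K-injectives), so that $\RHom_{\ccc}(C, \RHom_k(X,D))$ is computed by the honest double complex $\Hom_{\ccc}(C, \Hom_k(P_X, \iii))$. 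Symmetrically, $\RHom_{\ccc}(C, D)$ is computed by $\Hom_{\ccc}(C, \iii)$, which is a complex of $k$-modules, and $\RHom_k(X, \RHom_{\ccc}(C,D))$ is then computed by $\Hom_k(P_X, \Hom_{\ccc}(C, \iii))$. The first isomorphism now follows from the $k$-linear $\Hom$-$\Hom$ adjunction (tensor-hom-type) identity $\Hom_{\ccc}(C, \Hom_k(Q, \iii)) \cong \Hom_k(Q, \Hom_{\ccc}(C, \iii))$, which holds at the level of complexes for each finite free (hence dualizable) $Q = P_X^j$ and is compatible with the differentials, giving an isomorphism of total complexes.

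For the second isomorphism I would instead resolve $C$: since $C \in D^b(\ccc)$, I would like a resolution adapted to the tensor action, but $\ccc$ need not have enough flats. Here I would invoke the Pro-completion trick from Proposition \ref{balanced}: pass to $\Pro(\ccc)$, which is flat with enough projectives (and these are flat objects), take a projective resolution $P_C \to C$ in $D^-(\Pro(\ccc))$, and use that $X \otimes_k^L C$ is represented by $P_X \otimes_k P_C$, equivalently (by balancedness, proved in Proposition \ref{balanced}) by $P_X \otimes_k C$. Then $\RHom_{\ccc}(X \otimes_k^L C, D)$ is computed by $\Hom_{\ccc}(P_X \otimes_k C, \iii)$ — with a small check that $\RHom$ out of $X \otimes_k^L C$, a priori only in $D(\ccc)$, is still computed against the K-injective $\iii$ — and this honest complex is isomorphic, termwise and compatibly with differentials, to $\Hom_k(P_X, \Hom_{\ccc}(C, \iii))$ by the same dualizability of the $P_X^j$ together with the defining adjunction of $- \otimes_k -$ against $\Hom_k(-,-)$ on $\mmod(k)$. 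This matches the middle term's representative $\Hom_k(P_X, \Hom_{\ccc}(C, \iii))$ from the previous paragraph, so the two isomorphisms are in fact realized by the same object, completing the chain.

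I would present the argument as: (i) choose $\iii$ and $P_X$; (ii) check $\Hom_k(P_X, \iii)$ is K-injective; (iii) identify the three derived objects with $\Hom_{\ccc}(C, \Hom_k(P_X,\iii))$, $\Hom_k(P_X, \Hom_{\ccc}(C,\iii))$, and $\Hom_{\ccc}(P_X \otimes_k C, \iii)$ respectively; (iv) invoke the two finite-rank $k$-linear adjunction identities to identify all three; (v) check naturality of these identifications in the differential so that they pass to total complexes and hence to $D(\Mod(k))$. The main obstacle, I expect, is step (iii) for the third term: $X \otimes_k^L C$ lives only in $D(\ccc)$ (it is generally unbounded below once $X$ is unbounded below), so one must be careful that $\RHom_{\ccc}(X \otimes_k^L C, D) \cong \Hom_{\ccc}(P_X \otimes_k C, \iii)$ — i.e. that K-injectivity of $\iii$ suffices to compute $\RHom$ out of an arbitrary complex, not just a bounded one — and that the representative $P_X \otimes_k C$ (rather than $P_X \otimes_k P_C$) is legitimate, which is precisely where balancedness and the flatness of $\ccc$ (via $\Pro(\ccc)$) from Proposition \ref{balanced} are needed. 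The remaining verifications — dualizability of finite free modules, compatibility with differentials — are routine bicomplex bookkeeping.
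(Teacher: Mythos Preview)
Your proposal is correct and follows essentially the same approach as the paper: replace $X$ by a bounded-above complex of finite free $k$-modules, replace $D$ by a homotopy injective complex, use that $\Hom_k(P_X,\iii)$ is homotopy injective (this is exactly the content of Lemma~\ref{lemhopyinj}), and then invoke the underived adjunction identities. The only difference is that the paper dispatches the second isomorphism with a single word (``similar''), whereas you spell it out; your detour through $\Pro(\ccc)$ and $P_C$ is unnecessary here, since by the paper's convention $X \otimes^L_k C$ is \emph{defined} as the derived functor in the first variable, so $P_X \otimes_k C$ is already the correct representative without appealing to balancedness.
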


\begin{proof}
We may suppose that $X$ is a bounded above complex of finitely generated free $k$-modules, and that $D$ is homotopy injective. Then
\begin{align*}
\RHom_k(X, \RHom_{\ccc}(C,D)) & = \Hom_k(X, \Hom_{\ccc}(C,D))\\ & =  \Hom_{\ccc}(C, \Hom_k(X, D))\\
& = \RHom_{\ccc}(C, \Hom_k(X,D))\\
& = \RHom_{\ccc}(C, \RHom_k(X,D))
\end{align*}
where we have used Lemma \ref{lemhopyinj} in the third step.
The isomorphism between the first and the last expression is similar.
\end{proof}

\begin{lemma}\label{lemhopyinj}
Let $X$ be a bounded above complex of finite projective $k$-modules, and $D$ a homotopy injective complex of $\ddd$-objects. Then $\Hom_k(X,D)$ is homotopy injective. 
\end{lemma}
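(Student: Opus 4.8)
The plan is to reduce the statement to the standard fact that a complex of the form $\Hom^{\bullet}(P, D)$ is homotopy injective when $P$ is a bounded above complex of finite projectives and $D$ is homotopy injective, interpreting the ``Hom'' correctly in the mixed setting $\mmod(k)$ versus $\ddd$. First I would recall what must be checked: a complex $J$ of $\ddd$-objects is homotopy injective if and only if $\Hom_{\ddd}(E, J)$ (the Hom-complex, i.e.\ the total complex of the double complex of morphism groups) is acyclic for every acyclic complex $E$ of $\ddd$-objects. So fix an acyclic complex $E$ in $\mathrm{C}(\ddd)$ and consider $\Hom_{\ddd}(E, \Hom_k(X,D))$.

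The key step is an adjunction-type identification at the level of Hom-complexes: since $X$ is degreewise a finite projective (hence finite free summand) $k$-module, the functor $\Hom_k(X,-): \ddd \lra \ddd$ is right adjoint to $X \otimes_k -$, and both are exact on $\mmod(k)$-actions; more importantly, one gets a natural isomorphism of Hom-complexes
$$\Hom_{\ddd}\bigl(E, \Hom_k(X,D)\bigr) \cong \Hom_{\ddd}\bigl(X \otimes_k E, D\bigr) \cong \Hom_k\bigl(X, \Hom_{\ddd}(E,D)\bigr),$$
functorially in $E$; this is exactly the computation carried out in the proof of Proposition \ref{propchange}, now performed without passing to derived categories (we only use the degreewise adjunction and that everything is additive, so it assembles into an isomorphism of the associated total complexes). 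Here $X$ being bounded above with finite projective terms guarantees that the relevant products/sums in forming the total complexes are finite in each total degree, so no completeness or exactness subtleties arise and the isomorphism is honest.

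Now I would finish as follows. Since $D$ is homotopy injective and $E$ is acyclic, the Hom-complex $\Hom_{\ddd}(E, D)$ is acyclic. It remains to see that applying $\Hom_k(X, -)$ to an acyclic complex of $k$-modules yields an acyclic complex: this holds because $X$ is a bounded above complex of finite projective $k$-modules, so $\Hom_k(X, -)$ is (termwise) exact and, on a bounded above complex, the spectral sequence / convergent filtration argument for the total complex shows that it preserves acyclicity — concretely, the brutal filtration on $X$ has bounded-length associated graded in each total degree, so exactness propagates. Hence $\Hom_{\ddd}(E, \Hom_k(X,D)) \cong \Hom_k(X, \Hom_{\ddd}(E,D))$ is acyclic for every acyclic $E$, which is precisely homotopy injectivity of $\Hom_k(X,D)$.

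The main obstacle I anticipate is purely bookkeeping: making sure the total-complex formation commutes with the adjunction isomorphism (i.e.\ that the signs and the direct sum/product conventions match up) and that the boundedness-above hypothesis on $X$ is genuinely used to keep everything within complexes where these total complexes behave well and where ``$\Hom_k(X,-)$ preserves acyclicity'' is valid without flatness of $\ddd$ (only exactness of $\Hom_k(X^i,-)$, which follows since each $X^i$ is a summand of a finite free module, is needed). Everything else is formal.
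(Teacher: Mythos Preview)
Your proposal is correct and follows essentially the same approach as the paper: the paper's proof is the single line ``For an acyclic complex $E$ of $\ddd$-objects, we have $\Hom_{\ddd}(E, \Hom_k(X,D)) = \Hom_k(X, \Hom_{\ddd}(E,D))$,'' leaving implicit exactly the two points you spell out (acyclicity of $\Hom_{\ddd}(E,D)$ by homotopy injectivity of $D$, and preservation of acyclicity by $\Hom_k(X,-)$ since $X$ is bounded above with finite projective terms). Your version simply unpacks this more carefully.
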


\begin{proof}
For an acyclic complex $E$ of $\ddd$-objects, we have $\Hom_{\ddd}(E, \Hom_k(X,D)) = \Hom_k(X, \Hom_{\ddd}(E,D))$. 
\end{proof}

For some applications, it will be useful to extend the actions from $\mmod(k)$ to $\Mod(k)$. This is possible since $\ccc$ is a complete and cocomplete category. For example, for $C \in \ccc$, we define $- \otimes_k C: \Mod(k) \lra \ccc$ as the unique colimit preserving functor with $k \otimes_k C = C$. For $C \in C(\ccc)$, we obtain a derived functor $- \otimes^L_k C$ in the first argument. It is easily seen that $- \otimes^L_k C$ preserves coproducts.

\begin{proposition}\label{propcompchange}
For $X \in D^-(\Mod(k))$, $C, D \in C(\ccc)$ with $C$ compact, we have:
$$X \otimes^L_k \RHom_{\ccc}(C,D) \cong \RHom_{\ccc}(C, X \otimes^L_k D)$$
in $D(\Mod(k))$.
\end{proposition}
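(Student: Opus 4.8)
\textbf{Proof proposal for Proposition \ref{propcompchange}.}

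The plan is to reduce the statement to the case where $X$ is a single finitely generated free $k$-module by a resolution and limit argument, and in that case use compactness of $C$ directly. First I would replace $X \in D^-(\Mod(k))$ by a bounded above complex $P$ of free (not necessarily finitely generated) $k$-modules, so that both $X \otimes^L_k -$ and $X \otimes^L_k \RHom_{\ccc}(C,-)$ are computed by $P \otimes_k -$ applied levelwise; likewise I would replace $D$ by a homotopy injective complex so that $\RHom_{\ccc}(C,D) = \Hom_{\ccc}(C,D)$ on the nose. Thus the claim becomes the assertion that the canonical morphism
$$P \otimes_k \Hom_{\ccc}(C,D) \lra \Hom_{\ccc}(C, P \otimes_k D)$$
is a quasi-isomorphism of complexes of $k$-modules.

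Next I would check this levelwise, i.e. for a single free $k$-module $F = k^{(\Lambda)}$. Writing $F$ as a filtered colimit of its finitely generated free subobjects $F' = k^n$, the functor $- \otimes_k C$ preserves this colimit by construction (it is the unique colimit-preserving extension), and for finitely generated free $F'$ the canonical map $F' \otimes_k \Hom_{\ccc}(C,D) \to \Hom_{\ccc}(C, F' \otimes_k D)$ is an isomorphism since both sides are finite direct sums. Then compactness of $C$ enters: $\Hom_{\ccc}(C,-)$ commutes with the filtered colimit $F \otimes_k D = \colim_{F'} (F' \otimes_k D)$ (applied degreewise to the complex $D$), so
$$\Hom_{\ccc}(C, F \otimes_k D) = \colim_{F'} \Hom_{\ccc}(C, F' \otimes_k D) = \colim_{F'} (F' \otimes_k \Hom_{\ccc}(C,D)) = F \otimes_k \Hom_{\ccc}(C,D),$$
using in the last step that tensoring with $\Hom_{\ccc}(C,D)$ commutes with filtered colimits in the $k$-variable. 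This gives the levelwise isomorphism, hence an isomorphism of complexes $P \otimes_k \Hom_{\ccc}(C,D) \cong \Hom_{\ccc}(C, P \otimes_k D)$, and in particular a quasi-isomorphism, which is the desired statement in $D(\Mod(k))$. Finally I would note that the identification is functorial and independent of the choices of $P$ and of the homotopy injective replacement, by the usual comparison-of-resolutions arguments, and that Lemma \ref{lemhopyinj} (or rather its coproduct-indexed analogue) guarantees $P \otimes_k D$ may be used to compute $\RHom_{\ccc}(C, X \otimes^L_k D)$ once one knows $- \otimes^L_k D$ preserves the relevant quasi-isomorphisms, which holds because $\ccc$ is flat and $P$ is a complex of flat objects.

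The main obstacle I anticipate is not any single hard step but keeping the bookkeeping honest: one must make sure that the homotopy injective complex $D$ (used to compute $\RHom_{\ccc}(C,D)$) remains usable after tensoring with $P$ — i.e., that $\Hom_{\ccc}(C, P \otimes_k D)$ still computes $\RHom_{\ccc}(C, X \otimes^L_k D)$. Here compactness of $C$ is what saves us, since a coproduct (indexed by $\Lambda$ and by the degrees of $P$) of shifts of $D$ is again ``acyclic-tested'' by $C$ correctly: an acyclic complex stays acyclic after the exact functor $P^i \otimes_k -$ (each $P^i$ flat), and $\Hom_{\ccc}(C,-)$ of a coproduct of acyclics is acyclic because $C$ is compact. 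Making this precise — essentially a variant of Lemma \ref{lemhopyinj} with $\Hom_k(X,-)$ replaced by $X \otimes_k -$ and finiteness of $X$ replaced by compactness of $C$ — is the one place where some care is genuinely required.
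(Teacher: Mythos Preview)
Your concrete resolution-level approach has a genuine gap: you are using compactness of $C$ in $D(\ccc)$ as if it gave compactness at the chain-complex or homotopy-category level. When you write ``$\Hom_{\ccc}(C,-)$ commutes with the filtered colimit $F \otimes_k D = \colim_{F'} (F' \otimes_k D)$'', this is a statement about the Hom-complex functor on $C(\ccc)$; for it to hold levelwise you would need each $C^i$ finitely presented and $C$ bounded, and neither follows from compactness in $D(\ccc)$. The same issue recurs in your final paragraph: the claim that ``$\Hom_{\ccc}(C,-)$ of a coproduct of acyclics is acyclic because $C$ is compact'' conflates $\RHom$ with the underived Hom-complex. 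Compactness says $\RHom_{\ccc}(C,-)$ preserves coproducts in $D(\ccc)$; it says nothing about $K(\ccc)(C,A)$ for $A$ acyclic, and a coproduct of homotopy-injective complexes need not be homotopy injective. So you have not established that $\Hom_{\ccc}(C, P \otimes_k D)$ actually computes $\RHom_{\ccc}(C, X \otimes_k^L D)$, and the analogue of Lemma~\ref{lemhopyinj} you invoke does not hold in the form you need.

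The paper's proof sidesteps this entirely by never leaving the derived category: both $X \longmapsto X \otimes_k^L \RHom_{\ccc}(C,D)$ and $X \longmapsto \RHom_{\ccc}(C, X \otimes_k^L D)$ are triangulated functors on $D^-(\Mod(k))$ in $X$, they agree trivially on $X = k$, and both preserve arbitrary coproducts --- for the right-hand side this is exactly where compactness of $C$ is invoked, in $D(\ccc)$ where it is actually given. Since every object of $D^-(\Mod(k))$ is obtained from $k$ by cones, shifts and coproducts, the two functors agree. Your argument can be repaired, but only by moving the coproduct step from $C(\ccc)$ to $D(\ccc)$, at which point it collapses into the paper's argument.
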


\begin{proof}
Clearly, the isomorphism holds for $X = k$. Now every $X \in  D^-(\Mod(k))$ can be obtained from $k$ using cones, shifts and coproducts. By definition, both sides of the isomorphism define triangulated functors in $X$. It then suffices to show that both these functors preserve arbitrary coproducts. This easily follows using compactness of $C$.
\end{proof}

Finally, for a fixed $X \in C^-(\mmod(k))$, we will need the derived functors
$$\RHom_R^{I \! I}(X, -)): D(\ccc) \lra D(\ccc)$$
and

$${X} ^{I\!I}\! \!\! \otimes^L_R -: D(\Pro(\ccc)) \lra D(\Pro(\ccc))$$
defined using homotopy injective resolutions in $\ccc$ and homotopy projective resolutions in $\Pro(\ccc)$ respectively. 
According to Proposition \ref{balanced}, these functors coincide with $\RHom_R(X,-)$ and $X\otimes_R -$ on $D^+(\ccc)$ and $D^-(\ccc)$ respectively.

\subsection{Deformations}
Let $\ccc \lra \ddd$ be an abelian deformation of (flat) Grothendieck categories with adjoints $k \otimes_R -$ and $\Hom_R(k,-)$. 
We obtain the derived functors
$$\RHom_R^{I \! I}({k}, D)): D(\ddd) \lra D(\ccc)$$
and
$${k}\,^{I\!I}\! \!\! \otimes^L_R -: D(\Pro(\ddd)) \lra D(\Pro(\ccc))$$
which are right and left adjoint to the functors $D(\ccc) \lra D(\ddd)$ and $D(\Pro(\ccc)) \lra D(\Pro(\ddd))$ respectively.

\begin{proposition}[Derived change of rings]\label{CR}
Consider $D \in D(\ddd)$ and $X \in C^-(\mmod(k))$. We have
\begin{enumerate}
\item $\RHom^{I \! I}_k({X}, \RHom_R^{I \! I}({k}, D)) = \RHom_R^{I \! I}({X}, D)$;
\item ${X} ^{I\! I}\!\!\!\otimes^L_k ({k}\, ^{I \! I}\!\!\!\otimes^L_R D) = {X} ^{I \! I}\!\!\!\otimes^L_R D$.
\end{enumerate}
\end{proposition}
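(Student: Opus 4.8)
Both identities are instances of the ``associativity of change of rings'' in the derived world, and the plan is to reduce each to the corresponding underived statement: the natural isomorphisms of complexes $\Hom_R(X,-)\cong\Hom_k(X,\Hom_R(k,-))$ and $X\otimes_R-\cong X\otimes_k(k\otimes_R-)$. These hold termwise because every term of $X$ lies in $\mmod(k)$, so $I=\Kern(R\lra k)$ acts as zero on it (each term is an $R/I$-module), which is precisely what turns the underived change-of-rings maps into isomorphisms. The only genuine work is to check that the homotopy injective (resp. projective) resolutions used to compute the functors decorated with the superscript $I\!I$ are transported correctly along the inclusion $\ccc\hookrightarrow\ddd$ and its adjoints $k\otimes_R-$ and $\Hom_R(k,-)$.

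For (1), I would pick a homotopy injective resolution $D\lra E$ in $\ddd$. By definition one then has $\RHom_R^{I \! I}({X},D)=\Hom_R(X,E)$ and $\RHom_R^{I \! I}({k},D)=\Hom_R(k,E)$. Since $\ccc\hookrightarrow\ddd$ is exact with right adjoint $\Hom_R(k,-)$, and a right adjoint of an exact functor preserves homotopy injective complexes, the complex $\Hom_R(k,E)$ is homotopy injective in $\ccc$; hence it already computes $\RHom_R^{I \! I}({k},D)$, with no further resolution needed, and
$$\RHom^{I \! I}_k({X},\RHom_R^{I \! I}({k}, D))=\Hom_k(X,\Hom_R(k,E)).$$
Applying the underived change of rings termwise rewrites the right-hand side as $\Hom_R(X,E)=\RHom_R^{I \! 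I}({X},D)$, which gives (1) (and, as a byproduct, shows that $\RHom_R^{I \! I}({X},-)$ takes values in $D(\ccc)$).

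For (2) I would argue dually in the pro-categories: pick a homotopy projective resolution $P\lra D$ in $\Pro(\ddd)$, so that ${X} ^{I \! I}\!\!\!\otimes^L_R D=X\otimes_R P$ and ${k}\, ^{I \! I}\!\!\!\otimes^L_R D=k\otimes_R P$. The functor $k\otimes_R-\colon\Pro(\ddd)\lra\Pro(\ccc)$ is left adjoint to the (still exact) inclusion $\Pro(\ccc)\hookrightarrow\Pro(\ddd)$, and a left adjoint of an exact functor preserves homotopy projective complexes, so $k\otimes_R P$ is homotopy projective in $\Pro(\ccc)$ and computes ${k}\, ^{I \! I}\!\!\!\otimes^L_R D$; therefore ${X} ^{I\! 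I}\!\!\!\otimes^L_k ({k}\, ^{I \! I}\!\!\!\otimes^L_R D)=X\otimes_k(k\otimes_R P)$. The underived change of rings $X\otimes_R-\cong X\otimes_k(k\otimes_R-)$, again termwise, identifies this with $X\otimes_R P={X} ^{I \! I}\!\!\!\otimes^L_R D$.

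The facts that adjoints of exact functors preserve homotopy injective, resp. projective, complexes, and the termwise underived change-of-rings isomorphisms, are standard. The step I expect to be the real obstacle is the one that is not formal: that the inclusion of the pro-categories $\Pro(\ccc)\hookrightarrow\Pro(\ddd)$ is still exact and still admits $k\otimes_R-$ as a left adjoint at the abelian level. I would take this from \cite{lowenvandenbergh1}, where the pro-category of a deformation is analysed (and shown to remain flat), rather than attempt to prove it from scratch.
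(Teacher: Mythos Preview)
Your argument is correct and is exactly the paper's approach: the paper's proof simply states that $\Hom_R(k,-)$ sends homotopy injective complexes in $\ddd$ to homotopy injective complexes in $\ccc$, and that $k\otimes_R-$ sends homotopy projective complexes in $\Pro(\ddd)$ to homotopy projective complexes in $\Pro(\ccc)$; you have unpacked precisely these two facts (via the adjoint-of-exact-functor argument) and added the termwise underived change-of-rings identification, which the paper leaves implicit.
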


\begin{proof}
It suffices to note that $\Hom_R(k, -)$ maps a homotopy injective complex of $\ddd$-objects to a homotopy injective complex of $\ccc$-objects, and that $k \otimes_R -$ maps a homotopy projective complex of $\Pro(\ddd)$-objects to a homotopy-projective complex of $\Pro(\ccc)$-objects.
\end{proof}

\begin{proposition}[Derived Nakayama]
If $D \in D(\ddd)$ satisfies $\RHom_R^{I \! I}({k}, D) = 0$ or ${k}\, ^{I \! I}\!\!\!\otimes_R^L D = 0$, then $D = 0$
\end{proposition}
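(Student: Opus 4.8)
The plan is to run the classical \emph{nilpotent filtration} argument, feeding the hypothesis in through the change-of-rings isomorphisms of Proposition~\ref{CR}. I treat the case $\RHom_R^{I \! I}({k}, D) = 0$; the case ${k}\, ^{I \! I}\!\!\!\otimes_R^L D = 0$ is completely parallel, with Proposition~\ref{CR}(2) replacing Proposition~\ref{CR}(1), the functor $- \otimes_R^L D$ replacing $\RHom_R^{I \! I}(-, D)$, and everything taking place in $D(\Pro(\ddd))$; there one concludes $D = 0$ in $D(\Pro(\ddd))$, hence in $D(\ddd)$ since $\ddd \lra \Pro(\ddd)$ is exact and fully faithful. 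Throughout I use that $\RHom_R^{I \! I}(X, D)$, resp.\ $X \otimes_R^L D$, is defined by the same recipe as in the previous subsection for any $X \in C^b(\mmod(R))$, not just for $k$-module complexes.

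First I would record the finiteness bookkeeping. Since $k$ is finitely presented over $R$, the ideal $I = \Kern(R \lra k)$ is finitely generated, hence finitely presented because $R$ is coherent; consequently every power $I^j$ is a finitely presented $R$-module. As $I$ is nilpotent, fix $n$ with $I^n = 0$. For $0 \le j < n$ the $R$-module $I^j/I^{j+1}$ is killed by $I$, so it is a $k$-module, and a finitely presented $R$-module killed by $I$ is finitely presented over $k$ (tensor a finite presentation with $R/I$); regarded as a complex concentrated in degree $0$ it therefore lies in $C^-(\mmod(k))$. Proposition~\ref{CR}(1) then gives, for every such $j$,
$$\RHom_R^{I \! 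I}(I^j/I^{j+1}, D) \;\cong\; \RHom_k^{I \! I}\bigl(I^j/I^{j+1},\, \RHom_R^{I \! I}({k}, D)\bigr) \;=\; 0.$$

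To finish, I would exploit that $R$ is a finite iterated extension of the modules $I^j/I^{j+1}$: the $I$-adic filtration $R = I^0 \supseteq I^1 \supseteq \cdots \supseteq I^n = 0$ has successive quotients $I^j/I^{j+1}$. Applying $\RHom_R^{I \! I}(-, D)$, which is triangulated in its first argument, to the short exact sequences $0 \lra I^{j+1} \lra I^j \lra I^j/I^{j+1} \lra 0$ in $\mmod(R)$ produces distinguished triangles
$$\RHom_R^{I \! I}(I^j/I^{j+1}, D) \lra \RHom_R^{I \! I}(I^j, D) \lra \RHom_R^{I \! I}(I^{j+1}, D) \lra$$
in $D(\ddd)$. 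Now descend by induction on $j$ from $n$ to $0$: the base case $I^n = 0$ is trivial, and in the inductive step the triangle above together with the vanishing $\RHom_R^{I \! I}(I^j/I^{j+1}, D) = 0$ from the previous paragraph forces $\RHom_R^{I \! I}(I^j, D) = 0$. Taking $j = 0$ and using $\RHom_R^{I \! I}(R, D) \cong D$ (the unit of the $R$-linear action on $\ddd$) yields $D = 0$.

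Beyond invoking Proposition~\ref{CR}, the argument is formal; the points that need a little care are the finite-presentation claim for the graded pieces $I^j/I^{j+1}$ over $k$ (this is exactly where coherence of $R$ and finite presentation of $k$ over $R$ enter) and the fact that $\RHom_R^{I \! I}(-, D)$, resp.\ $- \otimes_R^L D$, is triangulated in its first variable on $D^b(\mmod(R))$, which belongs to the standard formalism of these derived functors. I expect no genuine obstacle here: the content is precisely that nilpotence of $I$ supplies, for free, the finiteness input that a derived Nakayama statement would otherwise require.
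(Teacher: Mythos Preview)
Your argument is correct and follows the same strategy as the paper: both proofs feed the hypothesis through Proposition~\ref{CR} applied to the triangle(s) coming from the $I$-adic filtration of $R$. The paper's proof is written as a single step, using only the triangle $\RHom_R^{I\!I}(k,D)\to D\to \RHom_R^{I\!I}(I,D)\to$ and invoking Proposition~\ref{CR} with $X=I$; strictly speaking that application requires $I\in\mmod(k)$, i.e.\ $I^2=0$, which is the basic case one usually reduces to in deformation theory. You instead run the full nilpotent filtration $R\supseteq I\supseteq\cdots\supseteq I^n=0$, applying Proposition~\ref{CR} to each graded piece $I^j/I^{j+1}\in\mmod(k)$ and climbing the resulting tower of triangles. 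So the underlying idea is identical; your version simply makes the general nilpotent case explicit rather than leaving the reduction to $I^2=0$ implicit, and your finiteness bookkeeping (coherence of $R$, finite presentation of $I^j/I^{j+1}$ over $k$) is exactly what is needed to justify each invocation of Proposition~\ref{CR}.
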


\begin{proof}
We have a triangle $\RHom_R^{I \! I}({k}, D) \lra D \lra \RHom_R^{I \! I}({I},D) \lra $. Furthermore, by Proposition \ref{CR}, $\RHom_R^{I \! I}({I},D) = \RHom_k^{I \! I}({I}, \RHom_R^{I \! I}({k},D)) = 0$.
\end{proof}

\begin{proposition}\label{propcomplift}
Consider $G \in D(\ddd)$ such that ${k}\, ^{I \! I}\!\! \!\otimes_R^L G$ is compact in $D(\ccc)$. Then $G$ is compact in $D(\ddd)$.
\end{proposition}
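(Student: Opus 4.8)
The plan is to show that the triangulated functor $D(\ccc) \lra D(\ddd)$ (call it $i$), left adjoint of ${k}\,^{I\!I}\!\!\!\otimes^L_R -$, sends $i$-diagrams nicely, but more precisely to exploit the adjunction together with the derived Nakayama lemma and the derived change of rings (Proposition \ref{CR}). Compactness of an object $G \in D(\ddd)$ means that $\RHom_{\ddd}(G, -)$ commutes with arbitrary coproducts. So first I would take an arbitrary set-indexed coproduct $\coprod_j D_j$ in $D(\ddd)$ and aim to show that the canonical map $\coprod_j \RHom_{\ddd}(G, D_j) \lra \RHom_{\ddd}(G, \coprod_j D_j)$ is an isomorphism in $D(\Mod(k))$ (or at least of the relevant hom-groups). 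Since $I = \Kern(R \lra k)$ is nilpotent, $R$ has a finite filtration with subquotients that are $k$-modules, so it suffices by a standard dévissage / triangle argument to reduce statements about $R$-linear structure to the $k$-linear level; this is the mechanism by which the hypothesis on ${k}\,^{I\!I}\!\!\!\otimes^L_R G$ gets used.

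The key steps, in order, would be: (1) Observe that the functor $D(\ccc) \lra D(\ddd)$ preserves coproducts (it is a left adjoint), and that ${k}\,^{I\!I}\!\!\!\otimes^L_R - : D(\Pro(\ddd)) \lra D(\Pro(\ccc))$ also preserves coproducts (it is computed by homotopy projective resolutions and tensoring, both of which are coproduct-preserving, as noted for $-\otimes^L_k C$ just before Proposition \ref{propcompchange}). (2) For a family $(D_j)$ in $D(\ddd)$, apply ${k}\,^{I\!I}\!\!\!\otimes^L_R -$ and use step (1) to get ${k}\,^{I\!I}\!\!\!\otimes^L_R \coprod_j D_j \cong \coprod_j ({k}\,^{I\!I}\!\!\!\otimes^L_R D_j)$ in $D(\ccc)$ (a priori in $D(\Pro(\ccc))$, but coproducts of $\ccc$-complexes stay in $D(\ccc)$). (3) Use compactness of ${k}\,^{I\!I}\!\!\!\otimes^L_R G$ in $D(\ccc)$ together with the adjunction $\RHom_{\ddd}(G, D) \cong \RHom_{\ccc}({k}\,^{I\!I}\!\!\!\otimes^L_R G, \text{(something)})$ — but here one must be careful: the adjoint on the nose relates $i$ (the functor $D(\ccc)\to D(\ddd)$) with ${k}\,^{I\!I}\!\!\!\otimes^L_R -$, not $G$ directly with its image. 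So instead I would argue via the triangle $\RHom_R^{I\!I}({k},D) \lra D \lra \RHom_R^{I\!I}({I},D) \lra$ applied with the roles reversed, or rather filter $R$ by powers of $I$: set $F^p = I^p/I^{p+1}$, a finitely presented $k$-module, and use that $\RHom_{\ddd}(G,-)$ on objects in the image of $i$ factors through the compact object ${k}\,^{I\!I}\!\!\!\otimes^L_R G$ via Proposition \ref{CR}(2) and the analogue of Proposition \ref{propcompchange}. (4) Conclude coproduct-preservation for $\RHom_{\ddd}(G,-)$ by inducting on the nilpotency index of $I$, the base case being the $k$-linear statement where $G$ maps (up to the derived tensor) to a compact object of $D(\ccc)$ by hypothesis.

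Concretely, the cleanest route is probably: any $D \in D(\ddd)$ sits in a finite tower of triangles whose graded pieces are in the image of $i: D(\ccc)\lra D(\ddd)$ (using that $I$ is nilpotent and $I^p D / I^{p+1} D$ is annihilated by $I$, hence comes from $\ccc$); the functor $\RHom_{\ddd}(G,-)$ is triangulated, so it suffices to check it commutes with coproducts on objects $i(C)$, $C \in D(\ccc)$; and there, $\RHom_{\ddd}(G, i(C)) \cong \RHom_{\ddd}(G, i(C))$ can be rewritten using adjunction between $i$ and its right adjoint $\RHom_R^{I\!I}({k},-)$ in the other direction — no wait, $i$ is left adjoint to ${k}\,^{I\!I}\!\!\!\otimes^L_R -$ only on the $\Pro$ side — so I would instead use the right adjoint $\RHom_R^{I\!I}({k},-)$ to $i$ and note $\RHom_{\ddd}(G, i(C))$ is not directly what we want; rather, reduce to showing $G$ is compact by testing against a generating family. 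The honest statement is: it suffices to test coproduct-commutation against coproducts of objects from a set of compact generators of $D(\ddd)$, and such generators can be produced from compact generators of $D(\ccc)$ lifted along $i$; then $\RHom_{\ddd}(G, i(C_\alpha)) \cong \RHom_{\ccc}({k}\,^{I\!I}\!\!\!\otimes^L_R G, C_\alpha) \oplus (\text{correction terms from }I)$, and compactness of ${k}\,^{I\!I}\!\!\!\otimes^L_R G$ handles the leading term while the $I$-corrections are handled inductively.

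The main obstacle will be step (3): pinning down exactly which adjunction to invoke and keeping track of the $\Pro$-completion subtleties, since ${k}\,^{I\!I}\!\!\!\otimes^L_R -$ is naturally defined on $D(\Pro(\ddd)) \lra D(\Pro(\ccc))$ while compactness is a statement in $D(\ccc)$ and $D(\ddd)$; one must verify that restricting to the subcategories $D^-(\ccc) \subseteq D^-(\Pro(\ccc))$ etc. is compatible with the coproducts under consideration, and that the derived Nakayama lemma can be leveraged to reduce a coproduct-preservation statement over $R$ to one over $k$. Once the dévissage along powers of $I$ is set up correctly and the compatibility of ${k}\,^{I\!I}\!\!\!\otimes^L_R -$ with coproducts is in hand, the inductive step should be a formal triangulated-category argument.
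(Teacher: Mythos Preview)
Your core strategy---d\'evissage via triangles plus an adjunction reducing $\RHom_{\ddd}(G,-)$ on objects coming from $\ccc$ to $\RHom_{\ccc}({k}\,^{I\!I}\!\!\!\otimes^L_R G,-)$---is exactly what the paper does, but you are making it harder than necessary and your hesitation about the adjunction is misplaced. The paper uses a \emph{single} triangle
\[
\RHom_R^{I\!I}({k},D_\alpha) \lra D_\alpha \lra \RHom_R^{I\!I}({I},D_\alpha) \lra
\]
for each $D_\alpha$, and the crucial point you missed is that \emph{both} outer terms already lie in $D(\ccc)$: the first obviously, and the second because $I$ is a $k$-module, so by derived change of rings (Proposition \ref{CR}) $\RHom_R^{I\!I}({I},D_\alpha) \cong \RHom_k^{I\!I}({I},\RHom_R^{I\!I}({k},D_\alpha))$. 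No tower, no induction on the nilpotency index, is required. Applying $\RHom_{\ddd}(G,-)$ to the triangle and to the coproduct of triangles, and using the adjunction on the two outer terms, finishes the proof by the five lemma.

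Regarding the adjunction: since $k\otimes_R-$ is left adjoint to the inclusion $\ccc \hookrightarrow \ddd$, its derived functor ${k}\,^{I\!I}\!\!\!\otimes^L_R-$ is left adjoint to $i: D(\ccc)\to D(\ddd)$, giving precisely $\RHom_{\ddd}(G,i(C)) \cong \RHom_{\ccc}({k}\,^{I\!I}\!\!\!\otimes^L_R G, C)$, which is exactly the identification you wanted and then talked yourself out of. The $\Pro$-completion is only needed to \emph{construct} the left derived functor via homotopy projectives; once it exists on $D(\ddd)$ the adjunction holds there. Finally, your fallback of ``testing against compact generators'' does not work as stated: compactness must be verified against arbitrary coproducts, and there is no shortcut that lets you test only coproducts of special objects---that reduction is precisely what the triangle argument accomplishes.
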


\begin{proof}
For a collection $D_{\alpha}$ in $D(\ddd)$, consider the canonical morphism
$$\bigoplus_{\alpha}\RHom_{\ddd}(G, D_{\alpha}) \lra \RHom_{\ddd}(G, \bigoplus_{\alpha} D_{\alpha}).$$
We are to show that this is a quasi-isomorphism.
For the collection $\RHom_R^{I \! I}({I}, D_{\alpha})$, and similarly for $\RHom_R^{I \! I}({k}, D_{\alpha})$, the corresponding map can be rewritten as:
$$\bigoplus_{\alpha}\RHom_{\ccc}({k}\, ^{I\! I}\!\!\!\otimes_R^LG, \RHom_R^{I \! I}({I},D_{\alpha})) \lra \RHom_{\ccc}({k}\, ^{I \! I}\!\!\!\otimes^L G, \bigoplus_{\alpha} \RHom_R^{I \! I}({I},D_{\alpha})),$$
which is a quasi-isomorphism by compactness of ${k}\, ^{I \! I}\!\!\!\otimes_R^L G$ in $\ccc$.
From the triangles
$$\triangle_{\alpha} = \RHom_R^{I \! I}({k}, D_{\alpha}) \lra D_{\alpha} \lra \RHom_R^{I \! I}({I}, D_{\alpha}) \lra$$
we obtain a morphism of triangles
$$\bigoplus_{\alpha}\RHom_{\ddd}(G, \triangle_{\alpha}) \lra \RHom_{\ddd}(G, \bigoplus_{\alpha} \triangle_{\alpha})$$ whence the result follows.
\end{proof}

\begin{proposition}\label{propgenlift}
Consider a collection $\GGGG$ of objects of $D^-(\ddd)$ such that the collection ${k}\, ^{I \! I}\!\!\!\otimes^L_R \GGGG = \{{k}\, ^{I\!I}\!\!\!\otimes^L_R G \,\, |\,\, G \in \GGGG\}$ compactly generates $D(\ccc)$.
Then $\GGGG$ compactly generates $D(\ddd)$.
\end{proposition}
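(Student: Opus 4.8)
\textbf{Proof proposal for Proposition \ref{propgenlift}.}

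The plan is to parallel the proof of Proposition \ref{propcomplift}, but now dealing with generation rather than compactness, using the already-established compactness statement as one of the two ingredients. The two things to prove are: (a) each $G \in \GGGG$ is a compact object of $D(\ddd)$, and (b) the collection $\GGGG$ generates $D(\ddd)$, meaning that if $D \in D(\ddd)$ satisfies $\RHom_{\ddd}(G, D) = 0$ for all $G \in \GGGG$ (and all shifts), then $D = 0$. Part (a) is immediate: since ${k}\, ^{I \! I}\!\!\!\otimes^L_R G$ is a member of a compactly generating family it is in particular compact in $D(\ccc)$, so Proposition \ref{propcomplift} applies and gives that $G$ is compact in $D(\ddd)$.

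For part (b), I would start from $D \in D(\ddd)$ with $\RHom_{\ddd}(G, D) = 0$ for all $G \in \GGGG$ and all shifts, and aim to show $\RHom_R^{I \! I}({k}, D) = 0$; then the Derived Nakayama proposition forces $D = 0$. By the adjunction between $D(\ccc) \lra D(\ddd)$ and $\RHom_R^{I \! I}({k}, -)$ recalled just before Proposition \ref{CR}, for each $G \in \GGGG$ we have
$$\RHom_{\ccc}({k}\, ^{I \! I}\!\!\!\otimes^L_R G, \RHom_R^{I \! I}({k}, D)) \cong \RHom_{\ddd}(G, D) = 0,$$
and similarly with all shifts of ${k}\, ^{I \! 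I}\!\!\!\otimes^L_R G$. Wait—I need the left adjoint of $\RHom_R^{I \! I}({k},-)$ to be the inclusion $D(\ccc) \lra D(\ddd)$, which is exactly what is stated; but I should be careful that the object ${k}\, ^{I \! I}\!\!\!\otimes^L_R G$ lies in $D(\ccc)$ and that the isomorphism above is the correct adjunction instance. Assuming this, since ${k}\, ^{I \! I}\!\!\!\otimes^L_R \GGGG$ compactly generates $D(\ccc)$ and $\RHom_R^{I \! I}({k}, D)$ is right-orthogonal to this whole generating set (and its shifts), we conclude $\RHom_R^{I \! I}({k}, D) = 0$. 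Derived Nakayama then yields $D = 0$, as required.

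The one point that needs care—the main obstacle—is the direction of the adjunction and whether the relevant Hom-orthogonality really transports correctly. The inclusion $i: D(\ccc) \lra D(\ddd)$ has $\RHom_R^{I\!I}({k},-)$ as right adjoint, so $\RHom_{\ddd}(i(E), D) \cong \RHom_{\ccc}(E, \RHom_R^{I\!I}({k}, D))$ for $E \in D(\ccc)$; applying this to $E = {k}\, ^{I\!I}\!\!\!\otimes^L_R G$ (viewed inside $D(\ccc)$, and noting $i$ of it is not literally $G$) is what must be checked. Here one should instead use the \emph{other} adjunction: ${k}\, ^{I\!I}\!\!\!\otimes^L_R -$ on the Pro-categories is left adjoint to $D(\Pro(\ccc)) \lra D(\Pro(\ddd))$, so $\RHom_{\Pro(\ddd)}(G, j(F)) \cong \RHom_{\Pro(\ccc)}({k}\,^{I\!I}\!\!\!\otimes^L_R G, F)$; combined with the fact that for $G \in D^-(\ddd)$ and the relevant objects one can pass between $\Pro$-categories and the original categories (as in Proposition \ref{balanced}), this gives the needed identity $\RHom_{\ddd}(G, D) \cong \RHom_{\ccc}({k}\,^{I\!I}\!\!\!\otimes^L_R G, \RHom_R^{I\!I}({k},D))$ — which is exactly the rewriting already used in the proof of Proposition \ref{propcomplift}. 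Once that identity is in hand, the argument closes as above, and the proof concludes with an invocation of Derived Nakayama.
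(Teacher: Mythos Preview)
Your overall strategy---compactness via Proposition \ref{propcomplift}, then generation by showing that any $D$ right-orthogonal to all of $\GGGG$ has $\RHom_R^{I\!I}(k,D)=0$ and invoking Derived Nakayama---is sound, and is the natural ``orthogonal'' reformulation of the paper's argument (which instead shows that the localizing subcategory generated by $\GGGG$ contains $k\otimes^L_R\GGGG$, hence $D(\ccc)$, hence all of $D(\ddd)$ via the triangle on $D$). But the key identity you write down at the end,
\[
\RHom_{\ddd}(G, D) \;\cong\; \RHom_{\ccc}\bigl(k\,^{I\!I}\!\!\!\otimes^L_R G,\ \RHom_R^{I\!I}(k,D)\bigr),
\]
is \emph{false} in general: for $R=k[\epsilon]/(\epsilon^2)$, $\ddd=\Mod(R)$, $G=D=R$, the left side is $R$ while the right side is $\RHom_k(k,\Hom_R(k,R))\cong k$. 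This is \emph{not} the rewriting used in the proof of Proposition \ref{propcomplift}: there the second argument on the $\ddd$-side is always an object $F$ that already lies in $D(\ccc)$ (namely $\RHom_R^{I\!I}(k,D_\alpha)$ or $\RHom_R^{I\!I}(I,D_\alpha)$), and the identity is $\RHom_{\ddd}(G,F)\cong\RHom_{\ccc}(k\,^{I\!I}\!\!\!\otimes^L_R G,F)$. Applying that correct identity with $F=\RHom_R^{I\!I}(k,D)$ only yields $\RHom_{\ddd}\bigl(G,\RHom_R^{I\!I}(k,D)\bigr)$ on the left, not $\RHom_{\ddd}(G,D)$.

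So a step is genuinely missing: from $\RHom_{\ddd}(G,D)=0$ you must still deduce $\RHom_{\ddd}\bigl(G,\RHom_R^{I\!I}(k,D)\bigr)=0$ (equivalently, that $D$ is right-orthogonal to $i(k\,^{I\!I}\!\!\!\otimes^L_R G)$ in $D(\ddd)$). The clean fix is to observe that $i(k\,^{I\!I}\!\!\!\otimes^L_R G)$ lies in the localizing subcategory of $D(\ddd)$ generated by $G$: compute $k\otimes^L_R G$ via a bounded-above resolution of $k$ by finite free $R$-modules and write the result as a homotopy colimit of iterated cones of copies of $G$. Since $\RHom_{\ddd}(-,D)$ takes coproducts to products, orthogonality to $G$ (and its shifts) propagates. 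This is exactly the step the paper isolates as the heart of its own proof, so once you fill the gap, the two arguments converge.
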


\begin{proof}
Since ${k}\, ^{I \! I}\!\!\!\otimes^L_R \GGGG$ compactly generates $D(\ccc)$, $D(\ccc)$ is the smallest triangulated subcategory of $D(\ccc)$ which is closed under coproducts and contains ${k} ^{I \! I}\!\!\otimes^L_R \GGGG$. Now for every object $D \in D(\ddd)$, the triangle $\RHom_R^{I \! I}({I}, D) \lra D \lra \RHom_R^{I\! I}({k}, D) \lra$ shows that $D(\ddd)$ is the smallest triangulated subcategory of $D(\ddd)$ containing ${k}\, ^{I \! I}\!\!\otimes^L_R \GGGG$. 
By Proposition \ref{propcomplift}, the objects in $\GGGG$ are compact in $D(\ddd)$. The proof will be finished if we can show that the objects in ${k}\, ^{I \! I}\!\!\!\otimes^L_R \GGGG$ are in the smallest triangulated subcategory of $D(\ddd)$ closed under coproducts and containing $\GGGG$. To see this, note that for $G \in D^-(\ddd)$, ${k}\, ^{I \! I}\!\!\!\otimes^L_R G = k \otimes^L_R G$ is computed using a resolution of finite free $R$-modules of $k$. Using the extended derived tensor product on $D^-(\Mod(R))$, and writing $k$ as a homotopy colimit of cones of finite free $R$-modules, we see that this is indeed the case.
\end{proof}

In order to lift objects from $D^b(\ccc)$ to $D^b(\ddd)$, we need to impose the further condition of \emph{finite flat dimension}.
For $C \in D^-(\ccc)$, 
$$\mathrm{fd}(C) = \min \{ n \in \N \,\, | \,\, \forall M \in \mmod(k), \forall |i| > n \,\, \Tor_i^k(M, C) = 0\}$$
if such an $n$ exists and $\mathrm{fd}(C) = \infty$ otherwise.
We put $D^{-}_{\mathrm{ffd}}(\ccc) \subseteq D^-(\ccc)$ the full subcategory of objects with finite flat dimension. Clearly, $D^{-}_{\mathrm{ffd}}(\ccc)$ is a triangulated subcategory and $D^{-}_{\mathrm{ffd}}(\ccc) \subseteq D^b(\ccc)$.

\begin{proposition}\label{propfdlift}\label{propflatlift}
Consider $D \in D^-(\ddd)$ and suppose $C = k \otimes^L_R D \in D(\ccc)$ has $\mathrm{fd}(C) \leq n$. Then $\mathrm{fd}(D) \leq n$ too. In particular, if $C \cong H^0(C)$ and $H^0(C)$ is flat, then $D \cong H^0(D)$ and $H^0(D)$ is flat.
\end{proposition}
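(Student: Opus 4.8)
The plan is to reduce the statement about $D \in D^-(\ddd)$ to a statement that can be detected after applying $k \otimes^L_R -$, using the fact that $I$ is nilpotent so that finitely many applications of the change-of-rings functors suffice. First I would unwind the definition: $\mathrm{fd}(D) \leq n$ means $\Tor^k_i(M, k\otimes^L_R D) = 0$ — wait, that is not quite right since $D$ lives over $\ddd$, not over $\ccc$. The correct reading is that we must control $X \otimes^L_R D$ for $X \in \mmod(k)$, or more precisely the interaction of $D$ with the $\mmod(k)$-action; by Proposition \ref{CR} and the fact that $X \otimes^L_R D = X \otimes^L_k (k \otimes^L_R D) = X \otimes^L_k C$ for $X \in C^-(\mmod(k))$, the flat dimension of $D$ measured against $\mmod(k)$ is literally computed by $C = k \otimes^L_R D$. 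So the bound $\mathrm{fd}(C) \leq n$ gives directly that $X \otimes^L_R D$ is concentrated in the expected range of degrees for all $X \in \mmod(k)$, which is what $\mathrm{fd}(D) \leq n$ asserts. The one genuine point is to make sure the vanishing of $X \otimes^L_k C$ outside $[-n, n]$ really does imply the same for $X \otimes^L_R D$ as an object of $D(\ddd)$ (not merely after pushing to $D(\ccc)$), and for this I would use the triangle $\RHom_R^{I\!I}(I, D) \lra D \lra \RHom_R^{I\!I}(k, D) \lra$ together with Derived Nakayama: both outer terms, after tensoring, are controlled by $C$ through Proposition \ref{CR}, and an induction on the nilpotency length of $I$ closes the argument.

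For the ``in particular'' clause, I would argue as follows. Suppose $C \cong H^0(C)$ with $H^0(C)$ flat; then $\mathrm{fd}(C) = 0$, so by the first part $\mathrm{fd}(D) \leq 0$. Since $D \in D^-(\ddd)$, finite flat dimension zero already forces $D$ to be cohomologically bounded; I then need to show $D$ is concentrated in degree $0$ and that $H^0(D)$ is flat. The key step is to apply $k \otimes^L_R -$ to the truncation triangles of $D$: if $D$ had nonzero cohomology in some top degree $d > 0$ or bottom degree $d < 0$, then — because each $\BBB(-,-)$-style lift is flat and $I$ is nilpotent, so $k \otimes^L_R -$ is conservative and ``detects'' the top/bottom cohomology up to the usual spectral-sequence edge maps — $k \otimes^L_R D = C$ would have cohomology in that degree too, contradicting $C \cong H^0(C)$. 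This gives $D \cong H^0(D)$. Finally, flatness of $H^0(D)$ follows from Proposition \ref{propflatlift}'s own first assertion applied to $D = H^0(D)$: $\mathrm{fd}(H^0(D)) \leq 0$ together with $H^0(D)$ being a module (in degree $0$) means $\Tor^k_i(M, H^0(D)) = 0$ for all $i \neq 0$ and all $M \in \mmod(k)$, which is exactly flatness in the intrinsic sense used in \cite{lowenvandenbergh1}.

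The main obstacle I anticipate is the bookkeeping around the two change-of-rings functors $\RHom_R^{I\!I}(k,-)$ and $k\,^{I\!I}\!\otimes^L_R-$ and the fact that they are only defined via homotopy injective / homotopy projective resolutions in $\ccc$ and $\Pro(\ccc)$ respectively — so one has to be careful that the identity $X \otimes^L_R D = X \otimes^L_k (k \otimes^L_R D)$ is applied within the correct derived category (passing through $\Pro(\ddd)$ when necessary) and that the cohomological-degree estimates are not disturbed by the enlargement of universe used to build $D^-(\ccc)$ in Proposition \ref{balanced}. A secondary subtlety is the conservativity and cohomology-detection property of $k \otimes^L_R -$ on $D^-$: this is essentially derived Nakayama applied degreewise, but it must be invoked at the level of the truncation triangles rather than naively, since $k \otimes^L_R -$ is not $t$-exact. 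Once these technical points are handled, the numerical estimate on flat dimension is immediate from Proposition \ref{CR}.
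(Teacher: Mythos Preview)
Your change-of-rings identification $X \otimes^L_R D \cong X \otimes^L_k C$ for $X \in \mmod(k)$ is correct and is exactly the paper's first step. The gap is in what comes next. Since $\ddd$ is $R$-linear, the definition of $\mathrm{fd}(D)$ requires $\Tor_i^R(Y,D) = 0$ for all $Y \in \mmod(R)$ and $|i| > n$, not merely for $Y \in \mmod(k)$. So the sentence ``which is what $\mathrm{fd}(D) \leq n$ asserts'' is wrong: after the change-of-rings step you have only handled the $k$-modules inside $\mmod(R)$. The issue you then raise --- whether the vanishing of $X \otimes^L_k C$ in $D(\ccc)$ transfers to $X \otimes^L_R D$ in $D(\ddd)$ --- is not an issue at all, since the change-of-rings isomorphism identifies these objects and $D(\ccc) \subseteq D(\ddd)$ is a full embedding. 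Your proposed remedy via the triangle on $D$ and derived Nakayama therefore addresses a non-problem and does not supply the missing piece.

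The paper's actual second step is much more direct: for an arbitrary $Y \in \mmod(R)$ one uses the short exact sequence $0 \lra IY \lra Y \lra k \otimes_R Y \lra 0$. The quotient $k \otimes_R Y$ is a $k$-module, so step one applies; and $IY$ has strictly shorter $I$-adic filtration than $Y$, so an induction on the nilpotency length of $I$ (which you do mention, but attach to the wrong triangle) gives $\Tor_i^R(Y,D) = 0$ for $|i| > n$. For the ``in particular'' clause, your truncation-and-conservativity argument can be made to work, but it is unnecessarily elaborate: once $\mathrm{fd}(D) = 0$, simply take $Y = R$ to obtain $H^i(D) = \Tor^R_{-i}(R,D) = 0$ for $i \neq 0$, so $D \cong H^0(D)$; flatness of $H^0(D)$ is then the statement $\Tor_i^R(Y, H^0(D)) = 0$ for $i > 0$, which is exactly what $\mathrm{fd}(D) = 0$ gives.
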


\begin{proof}
For any $X \in \mmod(k)$, we have $X \otimes^L_R D \cong X \otimes^L_k (k \otimes_R^L D) = X \otimes^L_k C$
so $\Tor_i^R(X,D) = H^i(X \otimes^L_R D) = 0$ for $|i| > n$. For an arbitrary $Y \in \mmod(R)$, the exact sequence $0 \lra IY \lra Y \lra k \otimes_R Y \lra 0$ easily yields that $\Tor_i^R(Y,D) = 0$ for  $|i| > n$.
\end{proof}

\subsection{Mutation and deformation}

Let $\ccc$ be a $k$-linear Grothendieck category. In this section we define mutations in the derived category $D(\ccc)$.
We will use the following standard concepts (see \cite{bridgelandstern}, \cite{gorodentsevkuleshov}):

\begin{enumerate}
\item An object $E \in D(\ccc)$ is \emph{exceptional} if $\RHom_{\ccc}(E, E) \cong k$;
\item A sequence of objects $E_0, E_1, \dots E_k$ is \emph{exceptional} if all the objects $E_i$ are exceptional and moreover $\RHom_{\ccc}(E_j, E_i) = 0$ for $j > i$.
\item A sequence of objects $E_0, E_1, \dots E_k$ is \emph{strong exceptional} if it is exceptional and moreover $\RHom_{\ccc}(E_i, E_j) \cong D(\ccc)(E_i, E_j)$ for all $i, j$.
\end{enumerate}

Consider $E, E_0, \dots E_k, C \in D^b(\ccc)$.
From Proposition \ref{propchange}, we obtain canonical morphisms $\RHom_{\ccc}(E,C) \otimes^L_k E \lra C$ and $C \lra \RHom_{\ccc}(C,E) \otimes^L_k E$ in $D(\ccc)$. 

\begin{enumerate}
\item The \emph{left mutation of $C$ through $E$} is defined by the triangle
$$\RHom_{\ccc}(E,C) \otimes^L_k E \lra C \lra L_E(C) \lra.$$
\item The \emph{left mutation of $C$ through $(E_0, \dots E_k)$} is
$$L_{(E_0, \dots E_k)}(C) = L_{E_0}L_{E_1}\dots L_{E_k}(C).$$
\item The \emph{right mutation of $C$ through $E$} is defined by the triangle
$$R_E(C) \lra C \lra \RHom_k(\RHom_{\ccc}(C,E), E) \lra.$$
\item The \emph{right mutation of $C$ through $(E_0, \dots E_k)$} is
$$R_{(E_0, \dots E_k)}(C) = R_{E_k}R_{E_{k-1}}\dots R_{E_0}(C).$$
\end{enumerate}

For a collection of objects $\eee \subseteq D^b(\ccc)$, we put
$$^{\perp}\eee = \{ C \in D^b(\ccc) \,\,| \,\, \RHom_{\ccc}(C, E) = 0 \,\, \forall \,\, E \in \eee\}$$
$$\eee^{\perp} = \{ C \in D^b(\ccc) \,\,| \,\, \RHom_{\ccc}(E, C) = 0 \,\, \forall \,\, E \in \eee\}$$

\begin{proposition}
Suppose the objects $E, E_0, \dots, E_k$ are exceptional and compact in $D(\ccc)$.
\begin{enumerate}
\item We obtain inverse equivalences $L_E: ^{\perp}\!\!E \lra E^{\perp}$ and $R_E: E^{\perp} \lra ^{\perp}\!\!E$.
\item We obtain inverse equivalences $L_{(E_0, \dots E_k)}: ^{\perp}\!\!(E_0, \dots, E_k) \lra (E_0, \dots, E_k)^{\perp}$ and $R_{(E_0, \dots E_k)}: (E_0, \dots, E_k)^{\perp} \lra  ^{\perp}\!\!(E_0, \dots, E_k)$.
\end{enumerate}
\end{proposition}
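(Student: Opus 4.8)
The strategy is the usual one for mutations through exceptional objects, carried out over the ground ring $k$: first check that $L_E$ maps ${}^{\perp}E$ into $E^{\perp}$ and $R_E$ maps $E^{\perp}$ into ${}^{\perp}E$, then that $R_E\circ L_E$ and $L_E\circ R_E$ restrict to the respective identities, and finally obtain (2) by induction on $k$ from (1).

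For the first point I would apply $\RHom_{\ccc}(E,-)$ to the defining triangle $\RHom_{\ccc}(E,C)\otimes^L_k E \lra C \lra L_E(C)\lra$. Using Proposition \ref{propcompchange} (this is where compactness of $E$ is needed) together with exceptionality $\RHom_{\ccc}(E,E)\cong k$ one identifies $\RHom_{\ccc}(E,\RHom_{\ccc}(E,C)\otimes^L_k E)$ with $\RHom_{\ccc}(E,C)$, under which the first map of the resulting triangle is the identity; hence $\RHom_{\ccc}(E,L_E(C))=0$, i.e. $L_E(C)\in E^{\perp}$ — in fact for every $C\in D^b(\ccc)$. Dually, applying $\RHom_{\ccc}(-,E)$ to $R_E(C)\lra C\lra \RHom_k(\RHom_{\ccc}(C,E),E)\lra$ and using Proposition \ref{propchange} together with the fact that $\RHom_{\ccc}(C,E)$ is a perfect complex of $k$-modules gives $\RHom_{\ccc}(R_E(C),E)=0$, i.e. $R_E(C)\in{}^{\perp}E$.

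For $C\in{}^{\perp}E$, applying $\RHom_{\ccc}(-,E)$ to the triangle defining $L_E(C)$, using $\RHom_{\ccc}(C,E)=0$ and Proposition \ref{propchange}, identifies $\RHom_{\ccc}(L_E(C),E)$ with $\RHom_k(\RHom_{\ccc}(E,C),k)[-1]$; substituting this into the triangle defining $R_E(L_E(C))$ and using biduality of the perfect complex $\RHom_{\ccc}(E,C)$ over $k$ (and $\RHom_k(P,E)\cong\RHom_k(P,k)\otimes^L_k E$) turns that triangle into $R_E(L_E(C))\lra L_E(C)\lra (\RHom_{\ccc}(E,C)\otimes^L_k E)[1]$, which is the rotation of the triangle defining $L_E(C)$; hence $R_E(L_E(C))\cong C$, and $L_E(R_E(C))\cong C$ on $E^{\perp}$ follows dually. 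The main obstacle is exactly here: one must check that the morphisms occurring in these two triangles genuinely coincide, not merely that their cones are abstractly isomorphic — this comes down to tracking the naturality of the evaluation and coevaluation maps underlying Proposition \ref{propchange} through the identifications above (and to verifying that the $\RHom_{\ccc}$'s between the compact objects $E, E_i$ and objects of $D^b(\ccc)$ are indeed perfect over $k$, which is what makes the $k$-linear duality available).

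For (2), where ${}^{\perp}(E_0,\dots,E_k)=\bigcap_i{}^{\perp}E_i$ and $(E_0,\dots,E_k)^{\perp}=\bigcap_i E_i^{\perp}$, I would argue inductively. Given $C\in{}^{\perp}(E_0,\dots,E_k)$: by (1) $L_{E_k}(C)\in E_k^{\perp}$, and for $j<k$, applying $\RHom_{\ccc}(-,E_j)$ to the triangle defining $L_{E_k}(C)$ together with $\RHom_{\ccc}(E_k,E_j)=0$ and Proposition \ref{propchange} shows $L_{E_k}(C)\in{}^{\perp}E_j$ as well; moreover, applying $\RHom_{\ccc}(E_k,-)$ to the triangle defining $L_{E_{k-1}}(D)$ for $D\in E_k^{\perp}\cap{}^{\perp}E_{k-1}$ together with $\RHom_{\ccc}(E_k,E_{k-1})=0$ shows $L_{E_{k-1}}(D)$ stays in $E_k^{\perp}$. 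Iterating through $E_{k-2},\dots,E_0$ yields that $L_{(E_0,\dots,E_k)}$ sends ${}^{\perp}(E_0,\dots,E_k)$ into $(E_0,\dots,E_k)^{\perp}$, and symmetrically for $R_{(E_0,\dots,E_k)}$. Finally, writing $R_{(E_0,\dots,E_k)}\circ L_{(E_0,\dots,E_k)}=R_{E_k}\cdots R_{E_0}\,L_{E_0}\cdots L_{E_k}$ and cancelling the inner pairs $R_{E_j}L_{E_j}$ one at a time using (1) — checking at each stage that the intermediate object lies in the relevant ${}^{\perp}E_j$ — gives the identity, and likewise for the opposite composite.
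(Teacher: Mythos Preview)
The paper does not actually prove this proposition: it is stated immediately after the definition of mutations and before Definition \ref{defhel}, with no proof given --- it is treated as a standard fact, with the surrounding notions referenced to \cite{bridgelandstern} and \cite{gorodentsevkuleshov}. So there is no ``paper's own proof'' to compare against; your argument is precisely the standard one from helix theory.

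That said, your self-diagnosis of the obstacle is accurate, and over the general coherent ground ring $k$ of this paper it is a genuine gap rather than a mere bookkeeping issue. The biduality step --- identifying $\RHom_k(\RHom_k(P,k),E)$ with $P\otimes^L_k E$ for $P=\RHom_{\ccc}(E,C)$ --- really does require $P$ to be perfect over $k$, and for an arbitrary $C\in D^b(\ccc)$ this is simply not true: already for $\ccc=\Mod(k)$ and $E=k$ one gets $P=C$, which can be any bounded complex of $k$-modules. Even the earlier step of applying Proposition \ref{propchange} to $\RHom_{\ccc}(E,C)\otimes^L_k E$ is problematic, since that proposition is stated for $X\in D^-(\mmod(k))$, and there is no reason $\RHom_{\ccc}(E,C)$ should have finitely presented cohomology. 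Over a field these issues evaporate and your argument goes through cleanly; over a general $k$ the proposition as stated would need either additional hypotheses on $\ccc$ or a restriction to objects $C$ for which the relevant $\RHom$'s are perfect.

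In the paper's actual applications (Theorem \ref{helixlift} and Theorem \ref{mainderdef}) mutations are only ever applied to objects $E_i$ of a helix through other such objects, all compact, with the $\RHom_{\ccc}(E_i,E_j)$ concentrated in degree zero and $k$-flat by hypothesis --- so perfect over $k$. In that restricted setting your argument is complete. Your treatment of part (2) by induction, using the exceptionality conditions $\RHom_{\ccc}(E_j,E_i)=0$ for $j>i$ to propagate membership in the orthogonals through successive mutations, is correct as written.
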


Following \cite{bridgelandstern}, we define helices depending on two positive integers.

\begin{definition}\label{defhel}
A sequence $H = (E_i)_{i \in \Z}$ in $D^b(\ccc)$ is an $(n,d)$-\emph{helix} (for positive integers $n$ and $d$ with $d \geq 2$) if:
\begin{enumerate}
\item For each $i \in \Z$ the corresponding \emph{thread} $(E_i, E_{i +1}, \dots, E_{i +n -1})$ is an exceptional collection of compact generators of $D(\ccc)$;
\item For each $i \in \Z$ $$E_{i - n} = L_{(E_{i - (n-1)}, \dots E_{i-1})}(E_i)[1-d].$$
\end{enumerate}
\end{definition}
A helix is called \emph{strong} if every thread is strong exceptional and \emph{geometric} if for all $i < j$, $\RHom_{\ccc}(E_i, E_j) \cong D(\ccc)(E_i, E_j)$. 

\begin{theorem}\label{helixlift}
Let $\ddd$ be a (flat) Grothendieck deformation of $\ccc$ and suppose $H = (E_i)_{i \in \Z}$ is an $(n,d)$-helix with $E_i \in D^-_{\mathrm{ffd}}(\ccc)$. 
\begin{enumerate}
\item There is a unique $(n,d)$-helix $\bar{H} = (\bar{E}_i)_{i \in \Z}$ with $\bar{E}_i \in D^-_{\mathrm{ffd}}(\ddd)$ and $k \otimes^L_R \bar{E}_i = E_i$ for all $i \in \Z$.
\item If $E_i$ is a flat object in $\ccc$, then $\bar{E}_i$ is a flat object in $\ddd$ with $k \otimes_R \bar{E}_i \cong E_i$.
\item If $H$ is strong and $D(\ccc)(E_i, E_j)$ is a flat $k$-module for $i < j$ and $j-i < n$, then $\bar{H}$ is strong and $D(\ddd)(\bar{E}_i, \bar{E}_j)$ is a flat $R$-module with $k \otimes_R D(\ddd)(\bar{E}_i, \bar{E}_j) \cong D(\ccc)(E_i, E_j)$ for $i < j$ and $j-i < n$.
\item If $H$ is geometric and $D(\ccc)(E_i, E_j)$ is a flat $k$-module for $i < j$, then $\bar{H}$ is geometric and $D(\ddd)(\bar{E}_i, \bar{E}_j)$ is a flat $R$-module with $k \otimes_R D(\ddd)(\bar{E}_i, \bar{E}_j) \cong D(\ccc)(E_i, E_j)$ for $i < j$.

\end{enumerate}
\end{theorem}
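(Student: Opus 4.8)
The plan is to lift the helix object by object using obstruction theory, exploiting that the members of a thread are exceptional, and then to check that the resulting family $\bar H = (\bar E_i)_{i\in\Z}$ is again an $(n,d)$-helix; parts (2)--(4) then follow from the change of rings and Nakayama statements of \S \ref{pardermat}. \emph{Lifting the objects.} Each $E_i$ occurs in a thread, hence is exceptional, compact, and lies in $D^-_{\mathrm{ffd}}(\ccc)$. Working inductively along the nilpotent filtration of $R$, for a square-zero step with kernel $J$ (a finitely presented $k$-module, since $I^m/I^{m+1}$ is killed by $I$) the obstruction to lifting $E_i$ lies in $\Ext^2_{\ccc}(E_i, J\otimes^L_k E_i)$ and, if it vanishes, the lifts form a torsor under $\Ext^1_{\ccc}(E_i, J\otimes^L_k E_i)$; but by Proposition \ref{propcompchange} and exceptionality $\RHom_{\ccc}(E_i, J\otimes^L_k E_i)\cong J\otimes^L_k\RHom_{\ccc}(E_i,E_i)\cong J$ is concentrated in degree $0$, so both groups vanish. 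Hence each $E_i$ admits a lift $\bar E_i\in D(\ddd)$ with $k\otimes^L_R\bar E_i\cong E_i$, unique up to isomorphism (cf. \cite{lowen2}). By Proposition \ref{propflatlift} one gets $\bar E_i\in D^-_{\mathrm{ffd}}(\ddd)$, and by Proposition \ref{propcomplift} the object $\bar E_i$ is compact in $D(\ddd)$.

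\emph{$\bar H$ is a helix.} The basic tool is the change of rings isomorphism $k\otimes^L_R\RHom_{\ddd}(\bar E,\bar D)\cong\RHom_{\ccc}(k\otimes^L_R\bar E, k\otimes^L_R\bar D)$ for $\bar E$ compact; it follows by writing $k$ as a complex of finite free $R$-modules and combining the compactness of $\bar E$ with the adjunction between $k\otimes^L_R-$ and the inclusion $D(\ccc)\subseteq D(\ddd)$, using Propositions \ref{CR} and \ref{propcompchange}. Together with Nakayama over $R$ this gives, inside any thread, $\RHom_{\ddd}(\bar E_i,\bar E_i)\cong R$ and $\RHom_{\ddd}(\bar E_j,\bar E_i)=0$ for $j>i$ (the base changes being $k$ and $0$), so every thread of $\bar H$ is an exceptional collection, and by Proposition \ref{propgenlift} it is a collection of compact generators of $D(\ddd)$. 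For the last helix axiom, note that $k\otimes^L_R-$ commutes with shifts and, via the same isomorphism together with the base change identity for $k$-complexes, with cones and with the functors $\RHom_{\ccc}(E,-)\otimes^L_k E$; hence it commutes with left mutation through a thread, so $k\otimes^L_R\big(L_{(\bar E_{i-n+1},\dots,\bar E_{i-1})}(\bar E_i)[1-d]\big)\cong L_{(E_{i-n+1},\dots,E_{i-1})}(E_i)[1-d]=E_{i-n}\cong k\otimes^L_R\bar E_{i-n}$. Since $E_{i-n}$ is exceptional its lift is unique, whence $\bar E_{i-n}=L_{(\bar E_{i-n+1},\dots,\bar E_{i-1})}(\bar E_i)[1-d]$. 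This proves (1); uniqueness of $\bar H$ follows from uniqueness of each $\bar E_i$.

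\emph{Parts (2)--(4).} For (2), if $E_i\cong H^0(E_i)$ is a flat object, Proposition \ref{propflatlift} gives $\bar E_i\cong H^0(\bar E_i)$ with $H^0(\bar E_i)$ flat, so $k\otimes_R\bar E_i\cong k\otimes^L_R\bar E_i\cong E_i$. For (3) and (4) the hypothesis is that $\RHom_{\ccc}(E_i,E_j)=D(\ccc)(E_i,E_j)$ is concentrated in degree $0$ and $k$-flat, for $i<j$ with $j-i<n$ (resp. for all $i<j$). Applying the change of rings isomorphism and Nakayama to $N=\RHom_{\ddd}(\bar E_i,\bar E_j)$: its extreme cohomologies survive $k\otimes^L_R-$, so $N$ is concentrated in degree $0$, and $\Tor_p^R(k,H^0 N)$ is the $p$-th cohomology of $k\otimes^L_R N\cong\RHom_{\ccc}(E_i,E_j)$, which vanishes for $p>0$; hence $D(\ddd)(\bar E_i,\bar E_j):=H^0 N$ is a flat $R$-module with $k\otimes_R D(\ddd)(\bar E_i,\bar E_j)\cong D(\ccc)(E_i,E_j)$. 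Together with the previous step this says exactly that $\bar H$ is strong (resp. geometric) with the asserted flatness.

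\emph{Main obstacle.} The delicate part is the second step: establishing the change of rings isomorphism for $\RHom$'s of compact objects, deducing from it that $k\otimes^L_R-$ commutes with mutation, and carrying the three properties ``lifts $E_i$'', ``compact'', ``finite flat dimension'' in parallel through the lifting and through the helix relation. The $\Ext$-vanishing that makes the object lifts unobstructed and essentially unique is built into exceptionality, so once the change of rings bookkeeping is in place, parts (2)--(4) are routine dévissages along the nilpotent filtration combined with Nakayama's lemma.
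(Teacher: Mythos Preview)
Your proposal is correct and follows essentially the same approach as the paper: lift each $E_i$ uniquely via obstruction theory using exceptionality and Proposition \ref{propcompchange}, invoke Propositions \ref{propflatlift}, \ref{propcomplift}, \ref{propgenlift} for finite flat dimension, compactness and generation, establish the change of rings isomorphism $k\otimes^L_R\RHom_{\ddd}(\bar E_i,\bar E_j)\cong\RHom_{\ccc}(E_i,E_j)$ together with derived Nakayama for exceptionality and parts (3)--(4), and use that $k\otimes^L_R-$ commutes with mutation (the content of Lemma \ref{mutdef}) to verify the helix relation. Your argument is in places more explicit than the paper's---notably your justification that the mutated object must coincide with $\bar E_{i-n}$ by uniqueness of lifts, and your d\'evissage for the flatness of $D(\ddd)(\bar E_i,\bar E_j)$---but the logical skeleton is identical.
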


\begin{proof}
Since every $E_i$ is compact and exceptional, we have $\RHom_{\ccc}(E_i, I \otimes^L_k E_i) \cong I \otimes^L_k \RHom_{\ccc}(E_i, E_i) \cong I$ so according to \cite{lowen2}, there is a unique derived lift $\bar{E}_i \in D^-(\ddd)$ with $k \otimes^L_R \bar{E}_i \cong E_i$. By Proposition \ref{propfdlift}, the objects $\bar{E}_i$ have bounded flat dimension. By Propositions \ref{propcomplift} and \ref{propgenlift}, $(\bar{E}_i, \dots, \bar{E}_{i + n -1})$ is a collection of compact generators for $D(\ddd)$. By Proposition \ref{propcompchange} and adjunction, we have $$k \otimes^L_R \RHom_{\ddd}(\bar{E}_i, \bar{E}_j) = \RHom_{\ccc}(k \otimes_R^L E_i, k \otimes_R^L E_j) = \RHom_{\ccc}(E_i, E_j).$$
Looking at the abelian deformation $\Mod(R)$ of $\Mod(k)$, we then have the following facts. By derived Nakayama, $\RHom_{\ccc}(E_i, E_j) = 0$ implies $\RHom_{\ddd}(\bar{E}_i, \bar{E}_j) = 0$. If $\RHom_{\ccc}(E_i, E_j) \cong k$, then necessarily $\RHom_{\ddd}(\bar{E}_i, \bar{E}_j) = R$ so $(\bar{E}_i, \dots, \bar{E}_{i + n -1})$ is an exceptional collection. If $\RHom_{\ccc}(E_i, E_j)$ is isomorphic to the flat $k$-module $D(\ccc)(E_i, E_j)$, then by Proposition \ref{propflatlift} $\RHom_{\ddd}(\bar{E}_i, \bar{E}_j)$ is isomorphic to the flat $R$-module $D(\ddd)(\bar{E}_i, \bar{E}_j)$ and $k \otimes_R D(\ddd)(\bar{E}_i, \bar{E}_j) \cong D(\ccc)(E_i, E_j)$. In particular, strongness and geometricity of the helix lift. Finally, the helix condition (2) for $\bar{H}$ easily follows from Lemma \ref{mutdef}.
\end{proof}

\begin{lemma}\label{mutdef}
Let $\ddd$ be a flat Grothendieck deformation of $\ccc$. Consider $E, C \in D^b(\ddd)$ with $E$ compact. Then
$$k \otimes^L_R L_E(C) \cong L_{k \otimes_R^L E}(k \otimes_R^L C).$$
\end{lemma}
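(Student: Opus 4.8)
The goal is to show that the functor $k \otimes^L_R - \colon D(\ddd) \lra D(\ccc)$ commutes with left mutation through a compact object. The plan is to apply $k \otimes^L_R -$ to the defining triangle of $L_E(C)$ and identify each resulting term with the corresponding term of the defining triangle of $L_{k \otimes_R^L E}(k \otimes_R^L C)$. Recall that $L_E(C)$ sits in the triangle
$$\RHom_{\ddd}(E,C) \otimes^L_R E \lra C \lra L_E(C) \lra,$$
where the first map is the canonical evaluation morphism from Proposition \ref{propchange}. Since $k \otimes^L_R -$ is triangulated, it suffices to show that it carries this triangle, and in particular the evaluation morphism, to the analogous triangle/morphism over $\ccc$.

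First I would handle the term $\RHom_{\ddd}(E,C) \otimes^L_R E$. The key computational input is that $E$ is compact in $D(\ddd)$, so by Proposition \ref{propcompchange} (applied to $X = k \in D^-(\Mod(R))$, the role of the base change module) we have
$$k \otimes^L_R \RHom_{\ddd}(E, C) \cong \RHom_{\ddd}(E, k \otimes^L_R C).$$
On the other hand, since $k \otimes^L_R C \in D(\ccc)$, the derived change of rings adjunction between $k \otimes^L_R - \colon D(\ddd) \lra D(\ccc)$ and the inclusion $D(\ccc) \lra D(\ddd)$ gives
$$\RHom_{\ddd}(E, k \otimes^L_R C) \cong \RHom_{\ccc}(k \otimes^L_R E, k \otimes^L_R C),$$
the latter being the $\RHom$ appearing in the target mutation triangle. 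Next, since $k \otimes^L_R -$ is monoidal with respect to the module actions (i.e. $k \otimes^L_R (M \otimes^L_R E') \cong (k \otimes^L_k M') \otimes^L_k (k \otimes^L_R E')$ type compatibility, which here reduces to $k \otimes^L_R (N \otimes^L_R E) \cong (k \otimes^L_R N) \otimes^L_k (k \otimes^L_R E)$ for $N \in D^-(\Mod(R))$), we get
$$k \otimes^L_R (\RHom_{\ddd}(E,C) \otimes^L_R E) \cong \RHom_{\ccc}(k \otimes^L_R E, k \otimes^L_R C) \otimes^L_k (k \otimes^L_R E).$$
This matches the first term of the mutation triangle for $L_{k \otimes_R^L E}(k \otimes_R^L C)$, and the second term $C$ obviously maps to $k \otimes^L_R C$.

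The remaining and genuinely delicate point is \emph{naturality}: one must check that the above chain of isomorphisms identifies $k \otimes^L_R(\text{ev}_{E,C})$ with $\text{ev}_{k \otimes^L_R E,\, k \otimes^L_R C}$, so that the two triangles have isomorphic connecting maps and hence isomorphic third vertices. I expect this to be the main obstacle, since it requires tracking the adjunction and base-change isomorphisms through the construction of the evaluation morphism of Proposition \ref{propchange}; the cleanest route is to characterize $\text{ev}_{E,C}$ by its universal property (it is the counit of the adjunction $\RHom_{\ccc}(E, -) \otimes^L_k E \dashv \RHom_{\ccc}(E, -)$, or dually the image of $\mathrm{id}$ under the natural isomorphism $\RHom(\RHom(E,C) \otimes^L E, C) \cong \RHom(\RHom(E,C), \RHom(E,C))$ from Proposition \ref{propchange}) and note that all the identifications above are natural transformations of such bifunctors, hence compatible with the corresponding units/counits. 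Once naturality is in place, uniqueness of the cone of a morphism up to (non-canonical, but here sufficient) isomorphism gives $k \otimes^L_R L_E(C) \cong L_{k \otimes_R^L E}(k \otimes_R^L C)$, completing the proof. A harmless technical remark: all objects involved lie in bounded-below (or bounded) subcategories where $k \otimes^L_R -$, $\RHom$, and the actions agree with their honest derived-functor definitions by Proposition \ref{balanced}, so no subtlety about which version of $\otimes^L$ or $\RHom$ one uses arises.
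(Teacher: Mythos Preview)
Your proposal is correct and follows essentially the same route as the paper: both arguments apply $k\otimes^L_R-$ to the defining triangle and identify the first vertex via the chain
\[
k \otimes^L_R [\RHom_{\ddd}(E, C) \otimes^L_R E] \cong [k \otimes^L_R \RHom_{\ddd}(E, C)] \otimes^L_R E \cong \RHom_{\ccc}(k \otimes^L_R E, k \otimes^L_R C) \otimes^L_k (k \otimes^L_R E),
\]
invoking Proposition~\ref{propcompchange} (compactness of $E$) together with adjunction, and Proposition~\ref{CR} (derived change of rings) for the last step. The paper's proof is terser and leaves the naturality with respect to the evaluation map implicit, whereas you spell this out; your added care there is appropriate but not a different method.
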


\begin{proof}
This easily follows from the following computation:
\begin{align*}
k \otimes^L_R [\RHom_{\ddd}(E, C) \otimes^L_R E] &= [k \otimes^L_R \RHom_{\ddd}(E, C)] \otimes^L_R E \\ & = \RHom_{\ccc}(k \otimes^L_R E, k \otimes^L_R C) \otimes^L_R E\\
& = \RHom_{\ccc}(k \otimes^L_R E, k \otimes^L_R C) \otimes^L_k (k \otimes^L_R E)
\end{align*}
where we have used Propositions \ref{propcompchange} and \ref{CR}.
\end{proof}

\subsection{$\Z$-algebras versus matrix algebras}
Let $\ccc$ be a Grothendieck category with a sequence of flat objects $(\ooo(n))_{n \in \Z}$ in $\ccc$. We are interested in the following situation:

\begin{enumerate}
\item $(\ooo(n))_{n \in \Z}$ is a $\Z$-generating sequence in $\ccc$;
\item $(\ooo(n))_{n \in \Z}$ is a geometric $(k,d)$-helix in $D(\ccc)$.
\end{enumerate}

\noindent In this situation, there are two natural associated algebraic objects:
\begin{enumerate}
\item The $\Z$-algebra $\AAA$ with
$$\AAA(n,m) = \begin{cases} \ccc(\ooo(-n), \ooo(-m)) & \text{if}\,\, n \geq m\\ 0 &\text{otherwise}\end{cases}$$
\item The full subcategory $\AAA_{[i-k, i]}$ of $\AAA$ spanned by the objects $i-k, \dots, i-1, i$.
\end{enumerate}
Under the listed conditions, we then have:
$$\ccc \cong \Qmod(\AAA) \hspace{1,5cm} \text{and} \hspace{1,5cm} D(\ccc) \cong D(\AAA_{[i-k,i]}).$$

\begin{theorem}\label{mainderdef}
There are equivalences of deformation functors
$$\xymatrix{ {\Def_{\mathrm{ab}}(\ccc)} \ar[r]_-{\lambda} & {\Def_{\mathrm{lin}}(\AAA)} \ar[r]_-{\rho} & {\Def_{\mathrm{lin}}(\AAA_{[i-k, \dots i]}).}}$$
\begin{enumerate}
\item For $\ddd \in \Def_{\mathrm{ab}}(\ccc)$, let $(B(n))_{n \in \Z}$ be the sequence of the unique flat lifts $B(n)$ of $\ooo(n)$ along $k \otimes_R -$. 
Then $(B(n))_{n \in \Z}$ satisfies conditions (1) and (2) and $\lambda(\ddd)$ is the $\Z$-algebra $\BBB$ associated to this sequence. In particular, $(B(n))_{n \in \Z}$ is the unique helix in $D(\ddd)$ with $k \otimes_R^L B(n) \cong \ooo(n)$.
\item For $\BBB \in \Def_{\mathrm{lin}}(\AAA)$, $\rho(\BBB) = \BBB_{[i-k, \dots i]}$, the full subcategory of $\BBB$ spanned by the objects $i-k, \dots, i-1, i$.
\end{enumerate}
\end{theorem}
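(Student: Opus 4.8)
The strategy is to establish each of the two equivalences $\lambda$ and $\rho$ separately and then identify their explicit prescriptions. The equivalence $\lambda$ is essentially already available: under conditions (1) and (2) the sequence $(\ooo(n))_{n\in\Z}$ is in particular a $\Z$-generating sequence of flat objects, and the geometricity of the helix forces $\RHom_{\ccc}(\ooo(m),\ooo(n))\cong D(\ccc)(\ooo(m),\ooo(n))$ to be concentrated in degree $0$ for $m\le n$ (a thread together with the helix relation controls the positive-degree $\Ext$'s; in the geometric case the higher $\Ext^i_{\ccc}(\ooo(m),\ooo(n))$ for $m\le n$ vanish). Hence the hypothesis $\Ext^{1,2}_{\ccc}(\ooo(m),X\otimes_k\ooo(n))=0$ of Theorem \ref{maindef} is satisfied, so $\lambda\colon\Def_{\mathrm{lin}}(\AAA)\lra\Def_{\mathrm{ab}}(\ccc)$ is an equivalence, and its inverse $\rho$ (the $\rho$ of Theorem \ref{maindef}, not yet the restriction functor) sends $\ddd$ to the $\Z$-algebra $\BBB$ built from the unique flat lifts $B(n)$ of $\ooo(n)$. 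This gives statement (1) except for the claim that $(B(n))_{n\in\Z}$ is again a geometric helix; for that one invokes Theorem \ref{helixlift}(4) applied to the objects $\ooo(n)$, which are flat and hence lie in $D^-_{\mathrm{ffd}}(\ccc)$, noting that the unique derived lift of Theorem \ref{helixlift} coincides with the unique flat lift $B(n)$ (uniqueness of lifts along $k\otimes_R-$, which holds because the obstruction groups $\Ext^{1,2}_{\ccc}(\ooo(n),I\otimes_k\ooo(n))$ vanish).

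\textbf{The restriction equivalence.}
The second equivalence $\rho\colon\Def_{\mathrm{lin}}(\AAA)\lra\Def_{\mathrm{lin}}(\AAA_{[i-k,i]})$, $\BBB\longmapsto\BBB_{[i-k,i]}$, is the heart of the theorem. The plan is to produce a quasi-inverse going the other way. Given a linear deformation $\CCC$ of the matrix algebra $\AAA_{[i-k,i]}$, the objects $i-k,\dots,i$ viewed inside $D(\CCC)$ form a deformation of the strong exceptional thread $(\ooo(-i),\dots,\ooo(-i+k))$ of compact generators of $D(\ccc)\cong D(\AAA_{[i-k,i]})$; by Theorem \ref{helixlift}(1) this thread extends uniquely to a geometric $(k,d)$-helix $(\bar E_j)_{j\in\Z}$ in $D(\CCC)$ lifting the given helix. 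One then re-assembles a $\Z$-algebra out of this helix by the usual formula, using that geometricity makes $\RHom$ between helix objects concentrated in degree zero, so that $\BBB(n,m)=D(\CCC)(\bar E_{?},\bar E_{?})$ is a flat $R$-module lifting $\AAA(n,m)$. This $\BBB$ is a linear deformation of $\AAA$, it restricts back to $\CCC$ on the thread $[i-k,i]$ (because the helix extends the original thread uniquely and $\CCC$ already provided those hom-modules), and conversely applying the construction to $\BBB_{[i-k,i]}$ recovers $\BBB$ by uniqueness of the helix lift. Thus $\rho$ is an equivalence.

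\textbf{Main obstacle and remaining checks.}
The main technical obstacle is the \emph{functoriality and full faithfulness} of $\rho$ as a morphism of deformation \emph{functors} rather than merely a bijection on isomorphism classes: one must check that the restriction $\BBB\mapsto\BBB_{[i-k,i]}$ and the helix-reconstruction are compatible with morphisms of deformations and with base change along $R'\lra R$, and that the natural transformations exhibiting them as mutually inverse are themselves natural. This reduces, via the uniqueness clauses in Theorem \ref{helixlift} and in the lifting theorem of \cite{lowen2}, to the rigidity of the objects involved: the helix objects are exceptional and compact, so their lifts are unique up to unique isomorphism, which propagates to uniqueness of the reconstructed $\Z$-algebra up to unique isomorphism. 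A secondary point to verify is that the reconstruction genuinely lands in $\Def_{\mathrm{lin}}(\AAA)$, i.e.\ that the hom-modules $D(\CCC)(\bar E_n,\bar E_m)$ for $n\ge m$ assemble into a flat $R$-linear category with $k\otimes_R\BBB\cong\AAA$; this is exactly what Theorem \ref{helixlift}(4) delivers together with the fact that for $n<m$ one gets $\RHom_{\CCC}(\bar E_n,\bar E_m)$ concentrated in strictly positive degrees, hence $\BBB(n,m)=0$ in degree zero, matching the positive grading of $\AAA$. Finally, the composite $\rho\circ\lambda$ is identified with the prescription in statements (1) and (2) by unwinding the two constructions: $\lambda(\ddd)=\BBB$ is the $\Z$-algebra of the lifts $B(n)$, and $\rho(\BBB)=\BBB_{[i-k,i]}$ is its restriction, which is precisely the matrix algebra of the thread $B(-i),\dots,B(-i+k)$ of the lifted helix.
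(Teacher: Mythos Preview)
Your plan matches the paper's proof closely: both establish $\lambda$ via Theorem \ref{maindef} and Theorem \ref{helixlift}, and both construct an inverse to $\rho$ by passing to the module deformation $\Mod(\AAA_{[i-k,i]})\hookrightarrow\Mod(\CCC)$, lifting the helix there via Theorem \ref{helixlift}, and rebuilding the $\Z$-algebra from the hom-spaces of the lifted helix. The identification of $\rho\circ\kappa$ with the identity is also the same in both.

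The one place where your write-up is thinner than the paper is the verification of $\kappa\circ\rho\cong\mathrm{id}$. You say that applying the reconstruction to $\BBB_{[i-k,i]}$ recovers $\BBB$ ``by uniqueness of the helix lift'', but uniqueness alone does not immediately compare $\BBB(n,m)$ (which is a hom-module in some unspecified ambient category) with the hom-spaces $D(\BBB_{[i-k,i]})(\bar E_{-n},\bar E_{-m})$ of the reconstructed helix. The paper closes this gap by invoking the already-proven equivalence $\lambda$: one may assume $\BBB=\lambda(\ddd)$ for an abelian deformation $\ddd$, so that $\BBB(n,m)=\ddd(B(-n),B(-m))$ for the flat lifts $B(n)$, and then the derived equivalence $D(\ddd)\cong D(\BBB_{[i-k,i]})$ (coming from the thread of compact generators) carries the helix $(B(n))_{n\in\Z}$ to the helix reconstructed inside $D(\BBB_{[i-k,i]})$, which yields the required isomorphism $\BBB\cong\kappa(\BBB_{[i-k,i]})$. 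Your sketch implicitly relies on exactly this bridge, so it would be worth making the use of $\lambda$ and of $D(\ddd)\cong D(\BBB_{[i-k,i]})$ explicit at that step.
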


\begin{proof}
We already know from Theorem \ref{maindef} that $\lambda$ defines an equivalence of deformation functors, and that $(B(n))_{n \in \Z}$ satisfies condition (1). By Theorem \ref{helixlift}, $(B(n))_{n \in \Z}$ is the unique helix in $D(\ddd)$ with $k \otimes_R^L B(n) \cong \ooo(n)$, and it is a geometric helix.

In order to show that $\rho$ is an equivalence too, we will construct an inverse equivalence $\kappa$.  
We start with a flat linear deformation $\BBB_{[i-k, \dots i]}$ of $\AAA_{[i-k, \dots i]}$.
Consider the induced flat abelian deformation $\Mod(\AAA_{[i-k, i]}) \lra \Mod(\BBB_{[i-k, i]})$ and let $H = (A(n))_{n \in \Z}$ be the $(k,d)$-helix spanned by $A(-i), \dots A(-i + k)$ in $D(\AAA_{[i-k, \dots i]})$. The objects $A(n)$ all have bounded flat dimension. By the derived equivalence $D(\ccc) \cong D(\AAA_{[i-k,i]})$, $H$ is a geometric helix and $D(\AAA_{[i-k, \dots i]})(A(n), A(m))$ is flat for $n \leq m$. From Theorem \ref{helixlift}, we thus obtain a unique $(n,d)$-helix $\bar{H} = (B(n))_{n \in \Z}$ in $D(\BBB_{[i-k, \dots i]})$ which is geometric and such that $D(\BBB_{[i-k, \dots i]})(B(n), B(m))$ is a flat $R$-module with $k \otimes_R D(\BBB_{[i-k, \dots i]})(B(n), B(m)) \cong D(\AAA_{[i-k, \dots i]})(A(n), A(m))$ for $n \leq m$. We now define the $\Z$-algebra $\BBB$ with
$$\BBB(n,m) = \begin{cases} D(\AAA_{[i-k, \dots i]})(B(-n), B(-m)) & \text{if}\,\, n \geq m\\ 0 & \text{otherwise} \end{cases}$$
and composition inherited from $D(\AAA_{[i-k, \dots i]})$. 
Then $\BBB$ is a $\Z$-algebra deforming $\AAA$, and we put $\kappa(\BBB_{[i-k, \dots i]}) = \BBB$. 

Next we are to verify that $\kappa$ and $\rho$ are inverse equivalences. It is clear that restricting the $\Z$-algebra $\BBB$ we just constructed to the objects $i -k, \dots i$ yields back (an isomorphic copy of) the original $\BBB_{[i-k, \dots i]}$, since the representable modules in $\Mod(\AAA_{[i-k, i]})$ lift precisely to the corresponding representable modules in $\Mod(\BBB_{[i-k, i]})$. 

For the other direction, we may - because of the equivalence given by $\lambda$ -  start with a $\Z$-algebra deformation $\BBB$ of $\AAA$ obtained from an abelian deformation $\ddd$ of $\ccc$ by lifting the flat objects $\ooo(n) \in \ccc$ to flat objects $E(n) \in \ddd$.
By Theorem \ref{helixlift}, $(E(n))_{n \in \Z}$ is the unique helix in $D(\ddd)$ lifting $(\ooo(n))_{n \in \Z}$ in $D(\ccc)$. In particular, for the restriction $\BBB_{[i-k, \dots, i]}$, we obtain an equivalence $D(\ddd) \cong D(\BBB_{[i-k, \dots, i]})$. If we now use $D(\BBB_{[i-k, \dots, i]})$ to construct $\kappa(\BBB_{[i-k, \dots i]})$, then this equivalence of categories yields the required isomorphism $\BBB \cong \kappa(\BBB_{[i-k, \dots i]})$.
\end{proof}

\def\cprime{$'$} \def\cprime{$'$}
\providecommand{\bysame}{\leavevmode\hbox to3em{\hrulefill}\thinspace}
\providecommand{\MR}{\relax\ifhmode\unskip\space\fi MR }
\providecommand{\MRhref}[2]{%
  \href{http://www.ams.org/mathscinet-getitem?mr=#1}{#2}
}
\providecommand{\href}[2]{#2}

\end{document}